\DeclareMathOperator{\im}{im}
\DeclareMathOperator{\Ind}{Ind}
\DeclareMathOperator{\Span}{Span}
\DeclareMathOperator{\End}{End}
\DeclareMathOperator{\aff}{aff}
\DeclareMathOperator{\Hom}{Hom}
\DeclareMathOperator{\Irr}{Irr}
\DeclareMathOperator{\Perv}{Perv}
\title{An exotic Springer correspondence for $F_4$}
\author{Jonas Antor}
\address{University of Bonn, Endenicher Allee 60, 53115 Bonn, Germany}
\email{antor@math.uni-bonn.de}
\newtheorem{theorem}{Theorem}[section]
\newtheorem{lemma}[theorem]{Lemma}
\newtheorem{remark}[theorem]{Remark}
\newtheorem{definition}[theorem]{Definition}
\newtheorem{corollary}[theorem]{Corollary}
\newtheorem{proposition}[theorem]{Proposition}
\newtheorem{mainthm}{Theorem}
\newtheorem*{theorem*}{Theorem}
\begin{document}
\begin{abstract}
We investigate the structure of the `exotic nilcone' of $F_4$ which is defined by exploiting certain characteristic two phenomena. We show that there are finitely many orbits on this nilcone and construct an associated Springer correspondence. Further to that, we show that all corresponding `exotic Springer fibers' admit an affine paving. We also deduce from this a geometric classification of certain simple modules for the affine Hecke algebra with unequal parameters of type $F_4$.
\end{abstract}
\maketitle

\section{Introduction}
Let $G = F_4(\bar{\mathbb{F}}_2)$ be the simple algebraic group of type $F_4$ in characteristic $2$. The adjoint representation $\mathfrak{g}$ has a $G$-stable subspace $\mathfrak{g}_s \subset \mathfrak{g}$ whose non-zero weights are precisely the short roots \cite{hogeweij1982almost,hiss1984adjungierten}. Consider the $G$-module $V := \mathfrak{g}_s \oplus \mathfrak{g}/\mathfrak{g}_s$. In \cite{antor2025geometric} the author constructed a certain `exotic nilpotent cone' $\mathfrak{N}(V) \subset V$ and an `exotic Springer resolution' $\mu: \tilde{V} \rightarrow \mathfrak{N}(V)$ which can be used to give a geometric realization of the affine Hecke algebra with unequal parameters of type $F_4$. The goal of this paper is to investigate the geometric and combinatorial properties of $\mathfrak{N}(V)$ and to show that there is an associated `exotic Springer correspondence'.

For any $x \in \mathfrak{N}(V)$, let $\mathcal{B}_x := \mu^{-1}(x)$ be the corresponding `exotic Springer fiber' and $A(x) := G_x/G_x^{\circ}$ the corresponding component group. The main results of this paper can be summarized as follows.

\begin{mainthm}\label{thm: A}(\cref{thm: finitely many orbits,thm: exotic Springer correspondence,thm: affine paving})
    \begin{enumerate}
        \item There are only finitely many $G$-orbits in $\mathfrak{N}(V)$. The number of orbits is $24$ and a complete list of orbit representatives together with the structure of their stabilizers is given in \cref{table: nilpotent orbits}.
        \item There is an `exotic Springer correspondence'
        \begin{equation*}
             \Irr(W) \overset{1:1}{\leftrightarrow} \{ (x,\rho) \mid x \in \mathfrak{N}(V) , \rho \in \Irr(A(x)), H_*(\mathcal{B}_x)_{\rho} \neq 0 \}/G.
        \end{equation*}
        Up to conjugation, there is a unique pair $(x,\rho)$ with $H_*(\mathcal{B}_x)_{\rho} = 0$ (namely the pair $(\xi_{17}, sgn)$ in the notation from \cref{table: nilpotent orbits}).
        \item For each $x \in \mathfrak{N}(V)$ the variety $\mathcal{B}_x$ admits an affine paving. In particular, we have $H_{odd}(\mathcal{B}_x) =0$.
    \end{enumerate}
    
\end{mainthm}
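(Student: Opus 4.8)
The plan is to establish the affine paving orbit by orbit via the Bialynicki--Birula method, in the spirit of De Concini--Lusztig--Procesi's treatment of ordinary Springer fibers. For each representative $x$ in \cref{table: nilpotent orbits} I would first produce a cocharacter $\lambda = \lambda_x \colon \mathbb{G}_m \to G$ with $\lambda(t)\cdot x = t^{d} x$ for some $d > 0$; such a $\lambda_x$ exists because $\mathfrak{N}(V)$ is a cone and, after conjugating $x$ into the span of a small set of weight vectors of a fixed maximal torus (made explicit by part~(1)), a dominant cocharacter on which all of these weights take the same positive value does the job — granting, as \cref{table: nilpotent orbits} should make clear, that each orbit has a representative of this form. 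Since membership ``$B \in \mathcal{B}_x$'', i.e.\ ``$x$ lies in the relevant $B$-stable subspace of $V$'', is unchanged under rescaling $x$, the torus $\lambda_x(\mathbb{G}_m)$ then acts on the projective variety $\mathcal{B}_x$.

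Next I would run the Bialynicki--Birula decomposition of the ambient flag variety $G/B$ for $\lambda_x$: it is a disjoint union of attracting cells which are affine bundles over the connected components of $(G/B)^{\lambda_x}$, each such component being a copy of the flag variety of the Levi $L_x := Z_G(\lambda_x)$ and hence affine-paved by Bruhat cells. Intersecting with the $\lambda_x$-stable closed subvariety $\mathcal{B}_x$ gives $\mathcal{B}_x = \bigsqcup_C \big(\mathcal{B}_x \cap (G/B)^{+}_C\big)$, and one must check that each projection $\mathcal{B}_x \cap (G/B)^{+}_C \to \mathcal{B}_x \cap C$ remains an affine bundle, the fibres being cut out of a $\lambda_x$-contracted affine space by the linear conditions defining the $B$-stable subspace of $V$. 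It then suffices to pave $\mathcal{B}_x \cap C$, which is the zero locus inside the flag variety of $L_x$ of the $\lambda_x$-weight-$d$ component of $x$, viewed as an element of the $L_x$-module $(\mathfrak{g}\oplus V)_d$. Since every proper Levi of $F_4$ is of classical type, in the cases that arise this is (a product of) an ordinary Springer fiber and a Kato-type exotic Springer fiber for $L_x$, or reduces to such after one more application of the same device; one then concludes by induction on the semisimple rank, the leaves being a full flag variety (residual element $0$) or a classical/exotic Springer fiber of rank $\leq 3$, handled by the corresponding known affine pavings or, failing a ready reference, by the same ``triangular coordinates'' argument as in the classical case: exhibit $\mathcal{B}_x \cap (\text{Schubert cell})$ as the vanishing locus of a system of polynomials in the coordinates of the unipotent radical which is linear and triangular for a suitable ordering of the roots, so that its solution set is an affine space.

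The main obstacle is closing this inductive scheme combinatorially: for each of the $24$ orbits one must check that the chosen $\lambda_x$ yields fixed-point components meeting $\mathcal{B}_x$ in strictly smaller Springer-type fibers, identify these precisely from the stabilizer data in \cref{table: nilpotent orbits}, and verify the two linearity claims (the restricted affine-bundle structure of the attracting cells, and the triangularity of the Schubert-cell equations at the leaves). I expect a few of the larger orbits — for instance the exotic analogue of the subregular class, and any orbit not represented by a sum of equal-weight root vectors — to need a direct analysis, possibly combining the $\mathbb{G}_m$-argument with an explicit decomposition of $\mathcal{B}_x$ into orbits of a unipotent group. Given the paving, the vanishing $H_{\mathrm{odd}}(\mathcal{B}_x) = 0$ follows at once, since each stratum has cohomology only in even degree and the long exact sequences of the associated filtration split.
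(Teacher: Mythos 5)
Your proposal addresses only part (3) of the theorem. Parts (1) and (2) are simply taken as given: you ``grant'' the orbit list of \cref{table: nilpotent orbits}, and you never discuss the bijection with $\Irr(W)$ or the identification of the unique vanishing pair $(\xi_{17},\mathrm{sgn})$. In the paper these are the bulk of the work: part (1) is proved by computing $\#\mathfrak{N}(V)^F=q^{48}$ via $\bar{\mathbb{Q}}_\ell$-rational smoothness of the nilcone and matching it against the point counts of the $24$ candidate orbits (which in turn requires the stabilizer computations, the special isogeny trick, and \cref{lemma: number of points in G/H with H0 unipotent}); part (2) requires the Fourier--Laumon identification $\End(\textbf{S})\cong\mathbb{C}[W]$, a counting argument ($25$ irreducibles versus $26$ pairs), and an explicit $S_3$-representation-theoretic computation on the fundamental classes of the paving of $\mathcal{B}_{\xi_{17}}$ showing $H_*(\mathcal{B}_{\xi_{17}})_{sgn}=0$. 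None of this is in your plan.

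Even for part (3) there is a real gap at the step you describe as ``one must check that each projection $\mathcal{B}_x\cap(G/B)^+_C\to\mathcal{B}_x\cap C$ remains an affine bundle.'' This is exactly where the content lies, and it is false for a general $\lambda$-stable closed subvariety: the Bia{\l}ynicki--Birula retraction restricted to a singular subvariety need not have affine-space fibers. The paper gets around this by proving that each intersection $\mathcal{B}_{\xi_i}\cap B\dot wB/B$ is \emph{smooth} (\cref{corollary: springer fiber intesect with Bruhat cell is affine}), which rests on the case-by-case dimension count $\dim B_{\xi_i}+\#\Phi_{\ge\xi_i}=28$ giving $\overline{B\cdot\xi_i}=V_{\ge\xi_i}$ and then on \cite[Lemma 2.2]{de1988homology}; only then does Bia{\l}ynicki--Birula apply. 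Your alternative of inducting through Levi subgroups also runs into trouble: the restriction of $V$ to a Levi of $F_4$ need not be one of Kato's exotic representations, so the fixed-point loci are not literally known Springer fibers, and the delicate case $\xi_{17}$ --- where the paper must use a \emph{non-regular} cocharacter, the fixed locus inside a Bruhat cell can be disconnected, and the intersection is a union of two affine cells --- is invisible in your scheme, yet it is precisely the input needed for part (2). So the strategy (a $\mathbb{G}_m$-contraction paving) is in the right family, but the proposal omits two thirds of the theorem and leaves the decisive smoothness/linearity verification, and the exceptional orbit, unaddressed.
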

A similar exotic Springer correspondence in type $B/C$ was studied by Kato \cite{kato2009exotic,kato2011deformations} and for $G_2$ in \cite{antor2025geometric}. We expect that one should be able to construct similar exotic Springer correspondences for any rooted representation as defined in \cite{antor2025geometric}.

Note that the number of orbits in $\mathfrak{N}(V)$ is different from the number of nilpotent orbits in the classical nilpotent cone $\mathcal{N}\subset \mathfrak{g}$ of $F_4$ (in any characteristic) and thus our exotic Springer correspondence is different from the classical Springer correspondence. Our analysis of $\mathfrak{N}(V)$ also yields several basic results whose analogue for the classical nilpotent cone is well-known. For example, we show that the number of $\mathbb{F}_q$-points in $\mathfrak{N}(V)$ equals $ q^{\dim \mathfrak{N}(V)} = q^{48}$ (\cref{prop: number of F2n points of nilcone}) and that $\mathfrak{N}(V)$ is `$\bar{\mathbb{Q}}_{\ell}$-rationally smooth' (\cref{prop: constant perverse sheaf on nilcone is constant sheaf}). 

\cref{thm: A} also has interesting applications to the representation theory of the affine Hecke algebra of type $F_4$ with unequal parameters $\mathcal{H}^{\aff}_{q_1,q_2}(F_4)$. In fact, it was shown in \cite{antor2025geometric} that one can classify the irreducible representations of this algebra whenever certain geometric conditions hold (these were called (A1)-(A3) in \textit{loc. cit.}). \cref{thm: A} verifies these conditions for representations whose central character is `positive real'. To state the resulting classification more precisely, pick a maximal torus $T \subset G$. To any positive real central character $\chi_a$  we can associate a subtorus $\hat{T}_a \subset T \times \mathbb{G}_m \times \mathbb{G}_m$ (see \cref{section: Springer corr} for more details).
\begin{mainthm}(\cref{thm: exotic DL correspondence})
    For any positive real central character $\chi_a$ there is a bijection
    \begin{equation*}
        \Irr_{\chi_a}(\mathcal{H}^{\aff}_{q_1,q_2}(F_4)) \overset{1:1}{\leftrightarrow} \{ (x,\rho) \mid x \in \mathfrak{N}(V)^{\hat{T}_a}, \rho \in \Irr( A(a,x)), H_*(\mathcal{B}_x^{\hat{T}_a})_{\rho} \neq 0 \}/G^{\hat{T}_a}
    \end{equation*}
    where $A(a,x) := G^{\hat{T}_a}_x/ (G^{\hat{T}_a}_x)^{\circ}$.
\end{mainthm}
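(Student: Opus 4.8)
The plan is to deduce Theorem~B from the conditional classification established in \cite{antor2025geometric}: \emph{loc.\ cit.} produces a bijection of exactly the asserted shape for any positive real central character $\chi_a$, provided the three geometric conditions (A1)--(A3) hold for the fixed-point data $\bigl(G^{\hat{T}_a},\, \mathfrak{N}(V)^{\hat{T}_a},\, \mu^{\hat{T}_a}\colon \tilde{V}^{\hat{T}_a}\to\mathfrak{N}(V)^{\hat{T}_a}\bigr)$. So the entire task is to verify (A1)--(A3), and the only input we may use is \cref{thm: A}, whose statements concern the \emph{whole} exotic nilcone $\mathfrak{N}(V)$ and its fibers $\mathcal{B}_x$ rather than their $\hat{T}_a$-fixed parts. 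Thus the real content is a descent argument: each geometric property of $\mathfrak{N}(V)$ must be pushed down to $\mathfrak{N}(V)^{\hat{T}_a}$.

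For condition (A1) (finiteness of orbits): by \cref{thm: A}(1) the group $G$ acts on $\mathfrak{N}(V)$ with the $24$ orbits listed in \cref{table: nilpotent orbits}, with stabilizers $G_x$ given there explicitly. For an orbit $\mathcal{O}=G\cdot x\cong G/G_x$ one has $\mathcal{O}^{\hat{T}_a}=\{gG_x : \hat{T}_a\subset gG_xg^{-1}\}$, and $G^{\hat{T}_a}=Z_G(\hat{T}_a)$ acts on it with orbits in bijection with the $G_x$-conjugacy classes of subtori of $G_x$ that are $G$-conjugate to $\hat{T}_a$; since each $G_x$ in the table is a linear algebraic group it has only finitely many conjugacy classes of subtori of given dimension, so $\mathcal{O}^{\hat{T}_a}$ is a finite union of $G^{\hat{T}_a}$-orbits. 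Summing over the $24$ orbits gives (A1). For condition (A2) (vanishing of odd cohomology of the fibers, via an affine paving): fix $x\in\mathfrak{N}(V)^{\hat{T}_a}$, so that $\mathcal{B}_x$ is $\hat{T}_a$-stable and $\mathcal{B}_x^{\hat{T}_a}=(\mu^{\hat{T}_a})^{-1}(x)$. By \cref{thm: A}(3) the variety $\mathcal{B}_x$ admits an affine paving; since this paving is constructed inside the $(T\times\mathbb{G}_m\times\mathbb{G}_m)$-variety $\tilde{V}$ and may be taken by cells stable under a generic one-parameter subgroup of $\hat{T}_a$ (equivalently, one invokes the standard fact that the fixed locus of a torus acting on a variety with a filtrable affine paving by invariant cells again admits an affine paving, the fixed locus of a linear torus action on $\mathbb{A}^n$ being a coordinate subspace), we conclude that $\mathcal{B}_x^{\hat{T}_a}$ admits an affine paving, hence $H_{odd}(\mathcal{B}_x^{\hat{T}_a})=0$. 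This is (A2).

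Condition (A3) is the delicate one: it asks that the exotic Springer correspondence for the pair $\bigl(G^{\hat{T}_a},\mathfrak{N}(V)^{\hat{T}_a}\bigr)$ behave correctly, i.e.\ roughly that the pushforward perverse sheaf associated to $\mu^{\hat{T}_a}$ decomposes as a multiplicity-one sum of the intersection cohomology sheaves attached to the pairs $(x,\rho)$ with $H_*(\mathcal{B}_x^{\hat{T}_a})_\rho\neq 0$. Here \cref{thm: A}(2) supplies only the analogous statement on all of $\mathfrak{N}(V)$, and I would descend it by a hyperbolic-localization (Braden's theorem) argument, as in Kato's treatment of the symplectic case: one identifies the $\hat{T}_a$-fixed-point contribution of $\mu_*\bar{\mathbb{Q}}_\ell$ with the corresponding sheaf for $\mu^{\hat{T}_a}$ up to the attracting-set terms, using the cleanness of the exotic Springer sheaf recorded in \cref{thm: A}(2) (the essentially unique vanishing pair $(\xi_{17},sgn)$ has to be tracked carefully through this reduction). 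I expect this compatibility of the exotic Springer decomposition with passage to $\hat{T}_a$-fixed points to be the main obstacle; by contrast the finiteness and affine-paving statements above are comparatively formal. Once (A1)--(A3) have been verified, the classification theorem of \cite{antor2025geometric} applies verbatim and yields the bijection of Theorem~B, with $A(a,x)=G^{\hat{T}_a}_x/(G^{\hat{T}_a}_x)^{\circ}$ as stated.
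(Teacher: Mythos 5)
Your overall strategy---invoke the conditional classification of \cite[Thm.~4.13]{antor2025geometric} and verify its hypotheses (A1)--(A3) using \cref{thm: A}---is exactly the paper's, but you have misplaced the burden of proof. The conditions (A1)--(A3) in \emph{loc.\ cit.} are conditions on the \emph{full} exotic nilcone and the \emph{full} Springer fibers: finiteness of $G$-orbits on $\mathfrak{N}(V)$, vanishing of $H_{odd}(\mathcal{B}_x)$, and bijectivity of the exotic Springer correspondence. These are precisely \cref{thm: finitely many orbits}, \cref{corollary: odd BM vanishing} and \cref{thm: exotic Springer correspondence}, and the paper's proof consists of nothing more than citing them; the entire reduction to $\hat{T}_a$-fixed-point data (the ``descent'' you identify as the real content) is already part of the content of the cited theorem and does not need to be redone here.

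Because you set yourself the task of carrying out that descent, your write-up has genuine gaps relative to its own framing. The (A3) descent---which you correctly flag as the delicate point---is not proved but only announced as a hyperbolic-localization strategy you ``expect'' to work, so the argument does not close. The (A1) descent also rests on a false lemma: a linear algebraic group does \emph{not} in general have finitely many conjugacy classes of subtori of a given dimension (already a torus of rank $\ge 2$ has infinitely many pairwise non-conjugate one-dimensional subtori); the correct finiteness statement for $\mathcal{O}^{\hat{T}_a}$ as a union of $G^{\hat{T}_a}$-orbits requires a different argument (rigidity of tori inside the stabilizer, or smoothness of the fixed locus of the orbit). The (A2) descent via equivariant affine pavings is also not automatic from \cref{thm: affine paving}, whose cells are affine spaces but not visibly linearly $\hat{T}_a$-equivariant. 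None of this needs to be repaired if you simply read the hypotheses of \cite[Thm.~4.13]{antor2025geometric} as the global conditions they are.
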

This can be thought of as a two-parameter analogue of the Deligne-Langlands correspondence for a single parameter proved in \cite{kazhdan1987proof}. For the choice of parameters $q_1 =q$, $q_2 = q^2$ a similar parameterization can also be obtained from \cite{lusztig1988cuspidal,lusztig1989affine,lusztig1995cuspidal} which was used in the classification of unipotent representations of $p$-adic groups \cite{lusztig1995classification,lusztig2002classification}.

\subsection*{Acknowledgements}
I am thankful to Dan Ciubotaru and Arnaud Eteve for helpful discussions. Some of the computations in this paper were checked with the help of the computer algebra software OSCAR \cite{OSCAR}.
\section{The representation $V$}\label{section: the rep V}
Let $G = F_4(\bar{\mathbb{F}}_2)$ be the exceptional group of type $F_4$ in characteristic $2$. Let $W$ be the Weyl group of $G$. For any $w \in W$ we denote by $\dot{w} \in N_G(T)$ a lift of $w$. Let $\Phi = \Phi_s \sqcup \Phi_l$ be the root system of type $F_4$ where $\Phi_s$ are the short roots and $\Phi_l$ are the long roots. Fix a system of positive roots $\Phi^+ \subset \Phi$ with negative roots $\Phi^- = - \Phi^+$. Let $\alpha_1, \alpha_2, \alpha_3, \alpha_4 \in \Phi^-$ be the (negative) simple roots labelled as follows
\begin{equation*}
    \dynkin[edge length=.75cm,
        labels*={\alpha_1, \alpha_2, \alpha_3, \alpha_4},
    ]F4.
\end{equation*}
For any $a,b,c,d \in \mathbb{Z}$, we write
\begin{equation*}
    \alpha_{abcd} :=  a \alpha_1 + b \alpha_2 +c \alpha_3 + d \alpha_4.
\end{equation*}
The short and long negative roots of $F_4$ are listed in \cref{figure: roots in F4}. Two roots are connected by a non-dashed arrow with label $i$ if they differ by the simple root $\alpha_i$ and by a dashed arrow with label $i$ if they differ by $2 \alpha_i$. The arrow points to the lower root.

\begin{figure}
\caption{Long and short negative roots in $F_4$}
\label{figure: roots in F4}
\begin{equation*}
    \begin{tikzpicture}
    \node at (1,11.5) {Long roots};
    \node at (7,11.5) {Short roots};
    \node[shape=circle,draw=black] (1000) at (0,0) {\tiny 1000};
    \node[shape=circle,draw=black] (0100) at (2,0) {\tiny 0100};
    \node[shape=circle,draw=black] (1100) at (0,1.5) {\tiny 1100};
    \node[shape=circle,draw=black] (0120) at (2,1.5) {\tiny 0120};
    \node[shape=circle,draw=black] (1120) at (0,3) {\tiny 1120};
    \node[shape=circle,draw=black] (0122) at (2,3) {\tiny 0122};
    \node[shape=circle,draw=black] (1220) at (0,4.5) {\tiny 1220};
    \node[shape=circle,draw=black] (1122) at (2,4.5) {\tiny 1122};
    \node[shape=circle,draw=black] (1222) at (1,6) {\tiny 1222};
    \node[shape=circle,draw=black] (1242) at (1,7.5) {\tiny 1242};
    \node[shape=circle,draw=black] (1342) at (1,9) {\tiny 1342};
    \node[shape=circle,draw=black] (2342) at (1,10.5) {\tiny 2342};

    \path [->] (1000) edge node[left] {$2$} (1100);
    \path [->] (0100) edge node[left] {$1$} (1100);
    \path [->,dashed] (0100) edge node[left] {$3$} (0120);
    \path [->,dashed] (1100) edge node[left] {$3$} (1120);
    \path [->] (0120) edge node[left] {$1$} (1120);
    \path [->,dashed] (0120) edge node[left] {$4$} (0122);
    \path [->] (1120) edge node[left] {$2$} (1220);
    \path [->,dashed] (1120) edge node[left] {$4$} (1122);
    \path [->,dashed] (0122) edge node[left] {$1$} (1122);
    \path [->,dashed] (1220) edge node[left] {$4$} (1222);
    \path [->] (1122) edge node[left] {$2$} (1222);
    \path [->,dashed] (1222) edge node[left] {$3$} (1242);
    \path [->] (1242) edge node[left] {$2$} (1342);
    \path [->] (1342) edge node[left] {$1$} (2342);
    
    \node[shape=circle,draw=black] (0010) at (6,0) {\tiny 0001};
    \node[shape=circle,draw=black] (0001) at (8,0) {\tiny 0001};
    \node[shape=circle,draw=black] (0110) at (6,1.5) {\tiny 0110};
    \node[shape=circle,draw=black] (0011) at (8,1.5) {\tiny 0011};
    \node[shape=circle,draw=black] (1110) at (6,3) {\tiny 1110};
    \node[shape=circle,draw=black] (0111) at (8,3) {\tiny 0111};
    \node[shape=circle,draw=black] (1111) at (6,4.5) {\tiny 1111};
    \node[shape=circle,draw=black] (0121) at (8,4.5) {\tiny 0121};
    \node[shape=circle,draw=black] (1121) at (7,6) {\tiny 1121};
    \node[shape=circle,draw=black] (1221) at (7,7.5) {\tiny 1221};
    \node[shape=circle,draw=black] (1231) at (7,9) {\tiny 1231};
    \node[shape=circle,draw=black] (1232) at (7,10.5) {\tiny 1232};

    \path [->] (0010) edge node[left] {$2$} (0110);
    \path [->] (0010) edge node[left] {$4$} (0011);
    \path [->] (0001) edge node[left] {$3$} (0011);
    \path [->] (0110) edge node[left] {$1$} (1110);
    \path [->] (0110) edge node[left] {$4$} (0111);
    \path [->] (0011) edge node[left] {$2$} (0111);
    \path [->] (1110) edge node[left] {$4$} (1111);
    \path [->] (0111) edge node[left] {$1$} (1111);
    \path [->] (0111) edge node[left] {$3$} (0121);
    \path [->] (1111) edge node[left] {$3$} (1121);
    \path [->] (0121) edge node[left] {$1$} (1121);
    \path [->] (1121) edge node[left] {$2$} (1221);
    \path [->] (1221) edge node[left] {$3$} (1231);
    \path [->] (1231) edge node[left] {$4$} (1232);
\end{tikzpicture}
\end{equation*}
\end{figure}
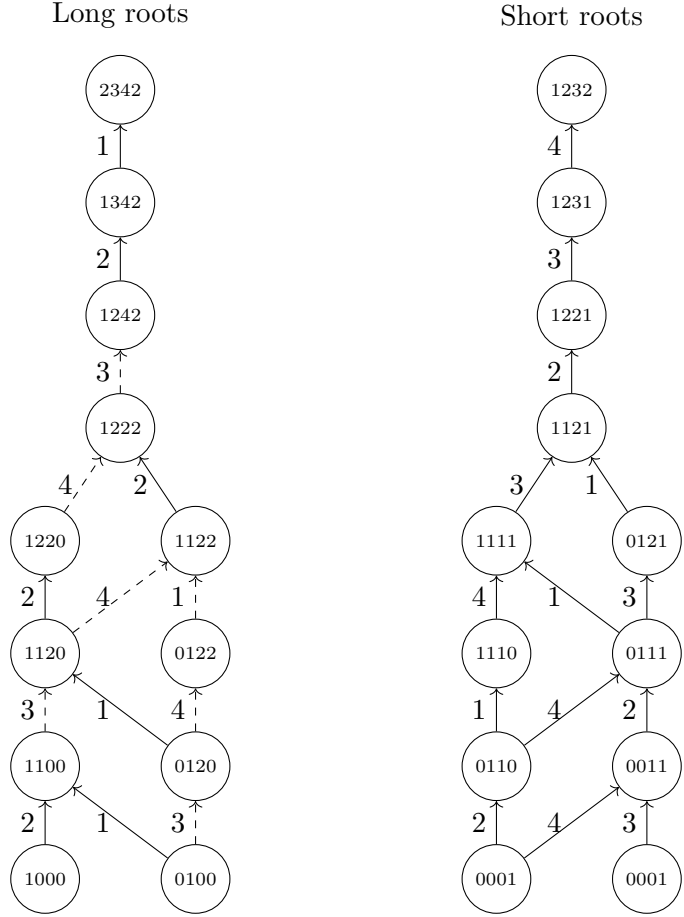

Let $T \subset G$ be a maximal torus and $B \subset G$ the geometric Borel subgroup, i.e. the Borel subgroup containing $T$ whose Lie algebra has non-zero weights $\Phi^-$. The flag variety will be denoted by $\mathcal{B} = G/B$. For any $w \in W$ we write ${}^w B := \dot{w} B \dot{w}^{-1}$. Let $U \subset B$ be the unipotent radical of $B$. We fix a Chevalley basis
\begin{equation*}
    \{X_{\alpha}, H_{\alpha_1}, H_{\alpha_2}, H_{\alpha_3}, H_{\alpha_4} \mid \alpha \in \Phi \}
\end{equation*}
of the adjoint representation $\mathfrak{g}$ (see \cite[Thm. 4.2.1]{carter1989simple} for the definition). The choice of Chevalley basis determines for each $\alpha \in \Phi$ a morphism of algebraic groups (c.f. \cite[Thm. 6.3.1]{carter1989simple})
\begin{equation}\label{eq: root group hom}
    \varphi_{\alpha} : SL_2 \rightarrow G
\end{equation}
which restricts to an isomorphism between $\mathbb{G}_a \cong \left\{ \begin{pmatrix}
    1 & * \\ 0  & 1
\end{pmatrix} \right\}$ and the root group $U_{\alpha} \subset G$. We also denote this isomorphism by
\begin{equation*}
    x_{\alpha}: \mathbb{G}_a \overset{\sim}{\rightarrow} U_{\alpha}.
\end{equation*}
Restricting $\varphi_{\alpha}$ to the subgroup $\mathbb{G}_a \cong \left\{ \begin{pmatrix}
    1 & 0 \\ *  & 1
\end{pmatrix} \right\}$ recovers the isomorphism $x_{-\alpha} : \mathbb{G}_a \overset{\sim}{\rightarrow}  U_{- \alpha}$.

The adjoint representation $\mathfrak{g}$ fits into a short exact sequence
\begin{equation*}
    0 \rightarrow \mathfrak{g}_s \rightarrow \mathfrak{g} \rightarrow \mathfrak{g}/\mathfrak{g}_s \rightarrow 0
\end{equation*}
where $\mathfrak{g}_s$ is the simple $G$-module whose non-zero weights are the short roots and $\mathfrak{g}/\mathfrak{g}_s$ is the simple $G$-module whose non-zero weights are the long roots (c.f. \cite{hogeweij1982almost,hiss1984adjungierten}). The subspace $\mathfrak{g}_s$ is spanned by $H_{\alpha_3}, H_{\alpha_4}$ and the $X_{\alpha}$ with $\alpha \in \Phi_s$. We define
\begin{equation*}
    V := \mathfrak{g}_s \oplus \mathfrak{g}/\mathfrak{g}_s.
\end{equation*}
The inclusion $\mathfrak{g}_s \hookrightarrow \mathfrak{g}$ and projection $\mathfrak{g} \rightarrow \mathfrak{g}/\mathfrak{g}_s$ induce a canonical isomorphism $V_{\alpha} \cong \mathfrak{g}_{\alpha}$ for each $\alpha \in \Phi$. Denote by $v_{\alpha} \in V_{\alpha}$ the element corresponding to $X_{\alpha} \in \mathfrak{g}_{\alpha}$ under this isomorphism. We then have the following explicit formula for the action of root groups on $V$.
\begin{lemma}\label{lemma: action of U on V}
    For any $\alpha, \beta \in \Phi^-$ we have
\begin{equation*}
    x_{\alpha}(t)v_{\beta} = \begin{cases}
        v_{\beta} + t v_{\alpha+\beta} & \alpha + \beta \in \Phi^- \text{ and } \beta, \alpha + \beta \text{ have the same length,} \\
        v_{\beta} + t^2 v_{2\alpha + \beta} & 2\alpha + \beta \in \Phi^- \text{ and } \beta, 2\alpha + \beta \text{ have the same length,}\\
        v_{\beta} & \text{otherwise}.
    \end{cases}
\end{equation*}
\end{lemma}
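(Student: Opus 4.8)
The plan is to reduce the statement to the standard description of the adjoint action of root group elements on the Chevalley $\mathbb{Z}$-form of $\mathfrak{g}$ and then reduce modulo $2$. Recall that for $\alpha,\beta\in\Phi$ with $\beta\neq\pm\alpha$ one has (see e.g. \cite[Ch.~4--6]{carter1989simple})
\begin{equation*}
    \mathrm{Ad}(x_{\alpha}(t))(X_{\beta}) \;=\; X_{\beta} \;+\; \sum_{\substack{i\geq 1\\ i\alpha+\beta\in\Phi}} M_{\alpha,\beta,i}\,t^{i}\,X_{i\alpha+\beta},
    \qquad M_{\alpha,\beta,i}=\pm\tbinom{p_{\alpha,\beta}+i}{i}\in\mathbb{Z},
\end{equation*}
where $p_{\alpha,\beta}\geq 0$ is the largest integer with $\beta-p_{\alpha,\beta}\alpha\in\Phi$. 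Here $\alpha,\beta\in\Phi^{-}$, so every root $i\alpha+\beta$ occurring is again negative and in particular nonzero, so no Cartan terms appear. The decisive point is that $\mathfrak{g}_s\subset\mathfrak{g}$ is a $G$-submodule with $(\mathfrak{g}_s)_{\gamma}=0$ for $\gamma$ long, whereas $\mathfrak{g}/\mathfrak{g}_s$ satisfies $(\mathfrak{g}/\mathfrak{g}_s)_{\gamma}=0$ for $\gamma$ short. Reducing mod $2$ and using the isomorphisms $V_{\gamma}\cong\mathfrak{g}_{\gamma}$, this gives (all coefficients understood mod $2$): for $\beta$ short the right-hand side must lie in $(\mathfrak{g}_s)_{\bar{\mathbb{F}}_2}$, so $M_{\alpha,\beta,i}$ is even whenever $i\alpha+\beta$ is long, and $x_{\alpha}(t)v_{\beta}=v_{\beta}+\sum_{i\geq1,\ i\alpha+\beta\ \mathrm{short}} M_{\alpha,\beta,i}\,t^{i}v_{i\alpha+\beta}$; dually, for $\beta$ long one reads the formula inside $\mathfrak{g}/\mathfrak{g}_s$, so the short-weight terms disappear and $x_{\alpha}(t)v_{\beta}=v_{\beta}+\sum_{i\geq1,\ i\alpha+\beta\ \mathrm{long}} M_{\alpha,\beta,i}\,t^{i}v_{i\alpha+\beta}$. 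In both cases only indices $i$ with $i\alpha+\beta$ of the same length as $\beta$ can contribute, which already accounts for the trichotomy in the statement.

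It remains to pin down these contributions, a finite check on the rank~$\leq 2$ root subsystems of $F_4$. Since the roots of $F_4$ have only two lengths (ratio $\sqrt2$), $F_4$ has no subsystem of type $G_2$, so every root string has length $\leq 3$; hence $p_{\alpha,\beta}\in\{0,1\}$ whenever $\alpha+\beta\in\Phi$, and $i\alpha+\beta\notin\Phi$ for $i\geq 3$. Comparing squared lengths along the $\alpha$-string through $\beta$ then yields, for $\alpha+\beta\in\Phi$: (i) if $\langle\beta,\alpha^{\vee}\rangle=-1$ then $\beta-\alpha\notin\Phi$, $2\alpha+\beta\notin\Phi$, $\alpha+\beta$ has the same length as $\beta$, and $M_{\alpha,\beta,1}=\pm1$; (ii) if $\langle\beta,\alpha^{\vee}\rangle=0$ then $\alpha,\beta$ are both short, $\beta-\alpha\in\Phi$, $2\alpha+\beta\notin\Phi$, $\alpha+\beta$ is long, and $M_{\alpha,\beta,1}=\pm2\equiv0$; (iii) if $\langle\beta,\alpha^{\vee}\rangle=-2$ then $\alpha$ is short, $\beta$ is long, the string is $\beta,\ \beta+\alpha,\ \beta+2\alpha$ of lengths long, short, long, and $M_{\alpha,\beta,1}=\pm1$, $M_{\alpha,\beta,2}=\pm1$. (These three cases exhaust the possibilities: $\alpha+\beta\in\Phi$ forces $\langle\beta,\alpha^{\vee}\rangle\leq0$, and $\langle\beta,\alpha^{\vee}\rangle\geq-2$ in $F_4$. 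Also $2\alpha+\beta\in\Phi$ implies $\alpha+\beta\in\Phi$ since root strings are unbroken.)

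Combining the two paragraphs gives exactly the asserted formula. If $\beta$ is short, case (iii) cannot occur, case (ii) yields a long-weight term which is killed mod $2$ in any case, and case (i) yields $tv_{\alpha+\beta}$ with coefficient $\pm1=1$ precisely when $\alpha+\beta$ is short, i.e. of the same length as $\beta$; this gives the first and (by default) third cases of the lemma. If $\beta$ is long, case (ii) cannot occur, case (i) yields $tv_{\alpha+\beta}$ with coefficient $\pm1$ and $\alpha+\beta$ long, and in case (iii) the $i=1$ term has a short weight and so vanishes in $\mathfrak{g}/\mathfrak{g}_s$, while the $i=2$ term $t^{2}v_{2\alpha+\beta}$ survives with coefficient $\pm1=1$ and $2\alpha+\beta$ long; this gives the second and third cases. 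The step requiring the most care is this final bookkeeping, where one must check that every ``wrong-length'' term drops out — for one of two complementary reasons: its coefficient $M_{\alpha,\beta,i}$ is even (automatic, since $\mathfrak{g}_s$ is a submodule), or its weight is annihilated by the relevant map in $\mathfrak{g}_s\hookrightarrow\mathfrak{g}\twoheadrightarrow\mathfrak{g}/\mathfrak{g}_s$ — and that the surviving coefficients are all $\equiv1$. I would additionally double-check the resulting list of $\alpha$-strings and structure constants with \cite{OSCAR}.
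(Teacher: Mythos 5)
Your proof is correct and follows essentially the same route as the paper: the Chevalley formula for $\mathrm{Ad}(x_\alpha(t))X_\beta$, the observation that the submodule structure of $\mathfrak{g}_s\subset\mathfrak{g}$ kills the wrong-length terms, and the fact that root strings in $F_4$ have length at most $3$ to pin down the surviving binomial coefficients. Your case split by $\langle\beta,\alpha^\vee\rangle\in\{-1,0,-2\}$ is just a more explicit version of the paper's split by $p\in\{0,1\}$.
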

\begin{proof}
    Let $\beta- p \alpha , ..., \beta + q \alpha$ be the $\alpha$-root string through $\beta$. By the Chevalley formulas (c.f. \cite[§4.3]{carter1989simple}), we have (ignoring signs since we are in characteristic $2$)
    \begin{equation*}
        x_{\alpha}(t) X_{\beta} = \sum_{k = 0}^q t^k {p +k \choose k} X_{k \alpha + \beta}
    \end{equation*}
    which yields  
    \begin{equation}\label{eq: general action of x alpha on v beta}
        x_{\alpha}(t) v_{\beta} = \sum_{\substack{ 0 \le k \le q , \\ \beta \text{ and }  k \alpha + \beta \text{ have} \\ \text{the same length}}} t^k {p +k \choose k} v_{k \alpha + \beta}.
    \end{equation}
    Note that any root string in $F_4$ contains at most three roots (this follows from \cite[Prop. 8.4(e)]{humphreys2012introduction} using the fact that in $F_4$ we always have $\langle \alpha, \check{\beta} \rangle \in \{ -2,-1,0,1,2 \} $). If the root string has length $2$, then both roots in the root string have the same length and if the root string has length three then the two outer roots in the root string are long and the inner root is short (this holds in any root system). Thus, if $p>0$ it can never happen that $k \alpha + \beta$ for $k>0$ is a root of the same length as $\beta$. If $p = 0$ it can only happen that one of $\alpha + \beta$ or $2 \alpha + \beta$ is a root of the same length as $\beta$ but never both. Moreover, in this case ${p + k \choose k} = 1$ so the lemma follows from \eqref{eq: general action of x alpha on v beta}.
\end{proof}
\begin{remark}\label{remark: computation via graph}
    Using \cref{lemma: action of U on V} the action of $x_{\alpha_1}(t),x_{\alpha_2}(t),x_{\alpha_3}(t),x_{\alpha_4}(t)$ on $V^-$ can be read off directly from \cref{figure: roots in F4}: Let $\alpha, \alpha'$ be two nodes in the graph connected by a non-dashed arrow with label $i$ pointing to $\alpha'$. Then $\alpha' = \alpha_i + \alpha$ and $x_{\alpha_i}(t) v_{\alpha} = v_{\alpha} + tv_{\alpha '}$. If the edge is dashed, then $\alpha' = 2\alpha_i + \alpha$ and $x_{\alpha_i}(t) v_{\alpha} = v_{\alpha} + t^2v_{\alpha '}$. In all other cases, $x_{\alpha_i}(t) v_{\alpha} = v_{\alpha}$. Similarly, the action of the simple reflections can be read off from the graph. If $\alpha$ and $\alpha'$ are connected by an arrow (dashed or non-dashed and ignoring orientation) with label $i$, then $s_{\alpha_i}(\alpha) = \alpha'$. Of course, we also have $s_{\alpha_i}(\alpha_i) = - \alpha_i$. In all other cases $s_{\alpha_i}(\alpha) = \alpha$.
\end{remark}
Next, we determine the action of the root groups on the $0$-weight space of $V$. Let 
\begin{align*}
    \mathfrak{h} &:= \Span \{ H_{\alpha_1}, H_{\alpha_2}, H_{\alpha_3}, H_{\alpha_4}\}  \\
\mathfrak{h}_s &:= \mathfrak{h} \cap \mathfrak{g}_s = \Span \{ H_{\alpha_3}, H_{\alpha_4}\}.
\end{align*}
The $0$-weight space of $V$ can then be written as
\begin{equation*}
    V_0 = \mathfrak{h}_s \oplus \mathfrak{h}/\mathfrak{h}_s.
\end{equation*}
By the Chevalley formulas (c.f. \cite[§4.3]{carter1989simple}) we have for any $h \in \mathfrak{h}$
\begin{equation}\label{eq: action of U on h}
    x_{\alpha}(t) h = h + t \alpha(h) X_{\alpha}
\end{equation}
where we view $\alpha$ as an element of $\mathfrak{h}^{\vee} = \Hom(\mathfrak{h}, \bar{\mathbb{F}}_2)$. Note that we can also view any $\alpha \in \Phi_s$ as an element of $\mathfrak{h}_s^{\vee} $ via restriction. Moreover, for each $\alpha \in \Phi_l$ we have $\alpha|_{\mathfrak{h}_s} = 0$ (otherwise we would get $X_{\alpha} \in \mathfrak{g}_s$ by \eqref{eq: action of U on h}). Thus, we can view $\alpha \in \Phi_l$ as an element of $(\mathfrak{h}/\mathfrak{h}_s)^{\vee}$. This defines a $W$-equivariant map
\begin{equation*}
    \Phi \rightarrow \mathfrak{h}_s^{\vee} \oplus (\mathfrak{h}/\mathfrak{h}_s)^{\vee} = V_0^{\vee}.
\end{equation*}
We will denote the image of $\alpha \in \Phi$ under this map again by $\alpha \in V_0^{\vee}$. It then follows from \eqref{eq: action of U on h} that
\begin{equation}\label{eq: action of U alpha on V0}
    x_{\alpha}(t) v = v + t \alpha(v) v_{\alpha}
\end{equation}
for all $v \in V_0$.
\begin{lemma}\label{lemma: alpha non zero on V0}
    The element $\alpha \in V_0^{\vee}$ is non-zero for each $\alpha \in \Phi$.
\end{lemma}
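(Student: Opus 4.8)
The plan is to reduce the statement to a single short root and a single long root by using $W$-equivariance. Recall from the construction that the assignment $\alpha \mapsto \alpha \in V_0^{\vee}$ is $W$-equivariant, and that $W$ acts on $V_0 = \mathfrak{h}_s \oplus \mathfrak{h}/\mathfrak{h}_s$ by invertible linear maps, induced from the action of $N_G(T)$ on the $G$-module $V$ (this action even preserves the two summands, since $\mathfrak{g}_s$ is $G$-stable and $\mathfrak{h} = \mathfrak{g}_0$, $\mathfrak{h}_s = (\mathfrak{g}_s)_0$ are $N_G(T)$-stable). Hence the induced action of $W$ on $V_0^{\vee}$ carries non-zero functionals to non-zero functionals, and since $W$ acts transitively on $\Phi_s$ and on $\Phi_l$ separately, it suffices to exhibit one short root and one long root whose image in $V_0^{\vee}$ is non-zero.

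For the short case I would take $\alpha_3$, whose image in $V_0^{\vee}$ is the restriction $\alpha_3|_{\mathfrak{h}_s} \in \mathfrak{h}_s^{\vee}$. By \eqref{eq: action of U on h} the value $\alpha_3(H_{\alpha_4})$ is (up to a sign, irrelevant in characteristic $2$) the Cartan integer $\langle \alpha_3, \check{\alpha}_4 \rangle$ attached to the pair of short simple roots of $F_4$, which equals $-1$; since $-1 \neq 0$ in $\bar{\mathbb{F}}_2$ and $H_{\alpha_4} \in \mathfrak{h}_s$, this restriction is non-zero. For the long case I would take $\alpha_1$, whose image lies in $(\mathfrak{h}/\mathfrak{h}_s)^{\vee} \subset V_0^{\vee}$ because $\alpha_1|_{\mathfrak{h}_s} = 0$; here $\alpha_1(H_{\alpha_2}) = \langle \alpha_1, \check{\alpha}_2 \rangle = -1 \neq 0$ and $H_{\alpha_2} \notin \mathfrak{h}_s$, so the functional induced on $\mathfrak{h}/\mathfrak{h}_s$ is non-zero. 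Applying the equivariance argument then gives the claim for every short root and every long root.

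I do not expect a real obstacle here; the only point requiring a little care is the choice of the auxiliary simple coroot in each case. Not every Cartan integer of $F_4$ is odd — for instance $\langle \alpha_2, \check{\alpha}_3 \rangle = -2$, which is of course consistent with $\alpha_2|_{\mathfrak{h}_s} = 0$ — so one has to pair a short root with the short simple coroot coming from a single bond, and a long root with the long simple coroot coming from a single bond, in order to land on the value $-1$. One should also confirm that the functional $\alpha \in \mathfrak{h}^{\vee}$ appearing in \eqref{eq: action of U on h} really is the usual root reduced mod $2$, so that $\alpha(H_{\alpha_i}) = \langle \alpha, \check{\alpha}_i \rangle \bmod 2$; this is immediate from the Chevalley commutation relations used to derive \eqref{eq: action of U on h}.
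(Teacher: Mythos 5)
Your proposal is correct and follows essentially the same route as the paper: reduce by $W$-equivariance to simple roots (the paper stops at "any simple root", you go one step further to one short and one long representative, which is immaterial), then evaluate against the coroot of an adjacent simple root of the same length to get the Cartan integer $-1 \neq 0$ in $\bar{\mathbb{F}}_2$. Your explicit caution about avoiding the even Cartan entries across the double bond is exactly the point the paper's phrase "of the same length as $\alpha$" is handling.
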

\begin{proof}
    By $W$-equivariance, we may assume that $\alpha$ is simple. Looking at the Cartan matrix of $F_4$, we can find another simple root $\beta$ of the same length as $\alpha$ such that $\langle\alpha, \check{\beta} \rangle = -1$. Hence, we can find an element $v \in V_0$ such that $\alpha(v) = -1 = 1$. Thus, $\alpha \in V_0^{\vee}$ is non-zero.
\end{proof}
\section{The regular semisimple locus}\label{section: rs locus}
The regular semisimple locus in $\mathfrak{g}$ plays an important role in the construction of the classical Springer correspondence (see for example \cite[Chapter 8]{achar2021perverse}). In this section, we define an analogue of the regular semisimple locus for $V$ and use it to prove some basic facts about the exotic nilcone.

Let us first recall the  construction of the exotic Springer resolution for $F_4$ from \cite{antor2025geometric}. Let 
\begin{align*}
    V^- &:= \bigoplus_{\alpha \in \Phi^-} V_{\alpha} \\
    V^{\le 0} &:= V_0 \oplus V^-.
\end{align*}
The exotic Springer resolution is the morphism
\begin{align*}
    \tilde{V} := G \times^B V^- \overset{\mu}&{\rightarrow} V \\
    (g,v) & \mapsto gv
\end{align*}
and the exotic Grothendieck-Springer resolution is the morphism
\begin{align*}
    \tilde{V}^{gs} := G \times^B V^{\le 0} \overset{\mu^{gs}}&{\rightarrow} V\\
    (g,v) & \mapsto gv.
\end{align*}
Define the exotic nilcone to be
\begin{equation*}
    \mathfrak{N}(V) := \im(\mu).
\end{equation*}
Note that $\mu$ is proper, so $\mathfrak{N}(V) \subset V$ is closed.
\begin{lemma}\label{lemma: dimension of nilcone}
    We have $\dim \mathfrak{N}(V) = 48$.
\end{lemma}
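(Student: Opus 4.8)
Note that $\tilde V = G\times^B V^-$ is a $G$-equivariant vector bundle over $\mathcal{B} = G/B$ of rank $\dim V^- = |\Phi^-| = 24$ (each $V_\alpha\cong\mathfrak{g}_\alpha$ is one-dimensional), while $\dim\mathcal{B} = |\Phi^+| = 24$, so $\dim\tilde V = 48$. Since $\mu$ is proper, hence closed, and surjects onto $\mathfrak{N}(V)$, this already gives $\dim\mathfrak{N}(V)\le\dim\tilde V = 48$. It therefore remains to exhibit a point of $\mathfrak{N}(V)$ whose $G$-orbit has dimension $48$.

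\textbf{The lower bound.} Every element of $V^-$ lies in $\mathfrak{N}(V)$ since $\mu(1,v) = v$, and $\mathfrak{N}(V)$ is closed and $G$-stable, so it suffices to find a ``regular nilpotent'' $x_0\in V^-$ with $\dim G\cdot x_0 = 48$; then $\mathfrak{N}(V)\supseteq\overline{G\cdot x_0}$ forces equality in the bound above (and incidentally identifies $\mathfrak{N}(V) = \overline{G\cdot x_0}$). The natural candidate is $x_0 := v_{\alpha_1}+v_{\alpha_2}+v_{\alpha_3}+v_{\alpha_4}$, the sum of the four negative simple root vectors; should this fail to have a dense $B$-orbit in $V^-$, one may instead take a generic element of $V^-$ and argue by semicontinuity. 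To estimate $\dim G\cdot x_0 = 52 - \dim G_{x_0}$ I would bound $\dim G_{x_0}\le\dim\mathfrak{g}_{x_0}$, where $\mathfrak{g}_{x_0} := \ker\bigl(\mathfrak{g}\to V,\ Y\mapsto Y\cdot x_0\bigr)$ is the stabilizer in the differentiated action (this inequality still holds in characteristic $2$, where the scheme-theoretic stabilizer may be non-reduced). Thus the lemma reduces to the linear-algebra claim $\dim\mathfrak{g}_{x_0}\le 4$; combined with the a priori inequality $\dim G\cdot x_0\le\dim\mathfrak{N}(V)\le 48$ (which forces $\dim\mathfrak{g}_{x_0}\ge\dim G_{x_0}\ge 4$) this pins down $\dim\mathfrak{g}_{x_0} = 4$ and $\dim\mathfrak{N}(V) = 48$.

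\textbf{Computing $\mathfrak{g}_{x_0}$, and the main obstacle.} The action of $\mathfrak{g}$ on $V = \mathfrak{g}_s\oplus\mathfrak{g}/\mathfrak{g}_s$ is the direct sum of the adjoint action on the ideal $\mathfrak{g}_s$ and the induced action on $\mathfrak{g}/\mathfrak{g}_s$. Differentiating \cref{lemma: action of U on V} and \eqref{eq: action of U alpha on V0} — and noting that the quadratic $t^2$-terms in \cref{lemma: action of U on V} differentiate to $0$ in characteristic $2$ — one obtains the explicit rules $X_\alpha\cdot v_\beta = v_{\alpha+\beta}$ when $\alpha+\beta\in\Phi$ has the same length as $\beta$ (and $X_\alpha\cdot v_\beta = 0$ in the remaining generic cases), $X_{-\beta}\cdot v_\beta$ equals the image of $H_\beta$ in $V_0 = \mathfrak{h}_s\oplus\mathfrak{h}/\mathfrak{h}_s$, $H\cdot v_\beta = \beta(H)v_\beta$, and $X_\alpha\cdot v = \alpha(v)v_\alpha$, $H\cdot v = 0$ for $v\in V_0$; most of this can be read off from \cref{figure: roots in F4} as in \cref{remark: computation via graph}. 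Expanding $Y\cdot x_0$ in the basis $\{v_\gamma\mid\gamma\in\Phi\}\cup(\text{a basis of }V_0)$ as $Y$ runs over the Chevalley basis of $\mathfrak{g}$ produces an explicit $52\times 52$ matrix whose kernel is $\mathfrak{g}_{x_0}$, and one verifies by a finite computation (cross-checked in OSCAR) that it has rank $48$. The main obstacle is exactly carrying out this rank computation correctly in characteristic $2$ and confirming that $x_0$ is genuinely regular; a more conceptual alternative, once the regular semisimple locus has been analysed in this section and $\mu^{gs}$ is shown to be dominant (so that $\dim\tilde V^{gs} = \dim V = 52$), is to deduce the lower bound from the theorem on dimensions of fibres applied to the exotic adjoint quotient $V\to V_0/\!\!/W$, provided one also identifies $\mathfrak{N}(V)$ with its fibre over the origin.
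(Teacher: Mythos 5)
Your upper bound and the overall strategy (find a $48$-dimensional orbit inside $\mathfrak{N}(V)$) are fine, but the step you reduce everything to --- the claim $\dim\mathfrak{g}_{x_0}\le 4$ for $x_0=v_{\alpha_1}+v_{\alpha_2}+v_{\alpha_3}+v_{\alpha_4}$ --- is false, so the proof as proposed does not close. The culprit is exactly the $t^2$-phenomenon you mention in passing: every dashed arrow produces a root vector that annihilates $x_0$ infinitesimally without its root group stabilizing $x_0$. Concretely, take $\gamma=\alpha_{0011}=\alpha_3+\alpha_4$. By \cref{lemma: action of U on V}, $\alpha_{0011}+\alpha_2=\alpha_{0111}$ is short while $\alpha_2$ is long (so no linear term), whereas $2\alpha_{0011}+\alpha_2=\alpha_{0122}$ is long, and $\gamma+\alpha_i$, $2\gamma+\alpha_i$ are not roots for $i=1,3,4$; hence $x_{\alpha_{0011}}(t)\,x_0=x_0+t^2v_{0122}$ and therefore $X_{\alpha_{0011}}\cdot x_0=0$ even though $x_{\alpha_{0011}}(t)\notin G_{x_0}$. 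The same computation shows $X_{\alpha_{0121}}$, $X_{\alpha_{1121}}$, $X_{\alpha_{1232}}$ and $X_{\alpha_{2342}}$ all kill $x_0$; since the map $Y\mapsto Y\cdot x_0$ is homogeneous for the principal (height) grading and these five roots have distinct heights, $\dim\mathfrak{g}_{x_0}\ge 5$. So your rank computation would return at most $47$, your inequality $\dim G_{x_0}\le\dim\mathfrak{g}_{x_0}$ only yields $\dim G\cdot x_0\ge 47$, and the stabilizer scheme is genuinely non-smooth here (indeed \cref{table: nilpotent orbits} records $\dim G_{\xi_{24}}=4$ for $\xi_{24}=x_0$, so $G_{x_0}$ is $4$-dimensional with a strictly larger Lie algebra). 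Passing to a generic element of $V^-$ does not help, since the generic element lies in the same (dense) orbit.

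The fix is to work at the level of the group rather than the Lie algebra. The paper does this by showing, via the Bruhat decomposition, that the fibre $\mu^{-1}(x_0)$ is a single point; upper semicontinuity of fibre dimension then gives $\dim\mathfrak{N}(V)\ge\dim\tilde V=48$ directly, with no need to discuss stabilizers at all. Alternatively, you could compute $G_{x_0}$ itself (not $\mathfrak{g}_{x_0}$) by the Bruhat-cell analysis used in \cref{lemma: stabilizers are as in table}; either route avoids the characteristic-$2$ infinitesimal pathology that breaks your reduction. Your second fallback, via an exotic adjoint quotient $V\to V_0/\!\!/W$ and an identification of $\mathfrak{N}(V)$ with its zero fibre, presupposes results that are at least as hard as the lemma and are not available at this point in the paper.
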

\begin{proof}
    The morphism $\tilde{V} \rightarrow \mathfrak{N}(V)$ is surjective and thus $\dim \tilde{V} \ge \dim \mathfrak{N}(V)$. Moreover, a straightforward computation with the Bruhat decomposition shows that $\mu^{-1}(v_{\alpha_1} + v_{\alpha_2} + v_{\alpha_3} + v_{\alpha_4})$ is a singleton (the argument is spelled out in the proof of \cite[Lemma 3.9]{antor2025geometric}). Thus, $\mu$ has a $0$-dimensional fiber and by standard results about fiber dimensions (c.f. \cite[\href{https://stacks.math.columbia.edu/tag/0B2L}{Tag 0B2L}]{stacks-project}) this implies $0 \ge \dim \tilde{V} - \dim \mathfrak{N}(V)$. Thus, we have shown that $\dim \tilde{V} = \dim \mathfrak{N}(V)$. The claim now follows since $ \dim \tilde{V} = \dim G - \dim B + \dim V^- = \dim G - \dim T = 48$.
\end{proof}
We now come to the definition of the regular semisimple locus.
\begin{definition}
    Let $V_0^{rs} := \{v  \in V_0 \mid \alpha(v) \neq 0 \text{ for all } \alpha \in \Phi\}$. We call $V^{rs} := G \cdot V_0^{rs}$ the regular semisimple locus in $V$.
\end{definition}
Next, we establish several basic results about the regular semisimple locus in $V$.
\begin{lemma}\label{lemma: regular semisimple non-empty}
    The set $V^{rs}$ is non-empty.
\end{lemma}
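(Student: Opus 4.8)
The plan is to reduce this to a purely linear-algebraic observation. Since $V^{rs} = G \cdot V_0^{rs}$ by definition, it suffices to prove that $V_0^{rs}$ is non-empty. First I would rewrite
\begin{equation*}
    V_0^{rs} = V_0 \setminus \bigcup_{\alpha \in \Phi} \ker(\alpha),
\end{equation*}
where each $\alpha$ is regarded as an element of $V_0^{\vee}$ via the $W$-equivariant map constructed in \cref{section: the rep V}.

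By \cref{lemma: alpha non zero on V0}, each $\alpha \in \Phi$ defines a non-zero linear functional on $V_0$, so $\ker(\alpha) \subsetneq V_0$ is a proper linear subspace. Hence $V_0^{rs}$ is the complement in $V_0$ of a finite union of proper subspaces. Since the ground field $\bar{\mathbb{F}}_2$ is infinite, a finite-dimensional $\bar{\mathbb{F}}_2$-vector space cannot be written as a finite union of proper subspaces, and therefore $V_0^{rs} \neq \emptyset$. This immediately gives $V^{rs} \neq \emptyset$.

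I do not expect any real obstacle here: the only input beyond elementary linear algebra is \cref{lemma: alpha non zero on V0}, which is already available, together with the standard fact that a vector space over an infinite field is not covered by finitely many proper subspaces. Alternatively, one could exhibit an explicit element of $V_0^{rs}$ using coordinates on $V_0 = \mathfrak{h}_s \oplus \mathfrak{h}/\mathfrak{h}_s$ and the list of roots in \cref{figure: roots in F4}, but the counting argument is shorter and avoids any case analysis.
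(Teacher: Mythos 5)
Your proof is correct and follows essentially the same route as the paper: both reduce to showing $V_0^{rs} \neq \emptyset$ and both rely on \cref{lemma: alpha non zero on V0} to see that each condition $\alpha(v) \neq 0$ cuts out a non-empty open subset. The only cosmetic difference is that the paper concludes via irreducibility of $V_0$ (finitely many dense open subsets of an irreducible variety intersect non-trivially), whereas you invoke the equivalent linear-algebra fact that a vector space over the infinite field $\bar{\mathbb{F}}_2$ is not a finite union of proper subspaces.
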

\begin{proof}
    Note that $V_0^{rs} $ is the intersection of the open subsets $\{v \in V_0 \mid \alpha(v) \neq 0 \} \subset V_0$ with $\alpha \in \Phi$ which are non-empty by \cref{lemma: alpha non zero on V0}. By the irreducibility of $V_0$, this implies that $V_0^{rs}$ is non-empty. Thus, $V^{rs}$ is also non-empty.
\end{proof}
\begin{lemma}\label{lemma: stabilizer of rs element}
    For any $v \in V_0^{rs}$ we have $G_v^{\circ} = T$.
\end{lemma}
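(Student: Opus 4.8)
The plan is to reduce the statement to a dimension count. Since $V_0$ is the zero weight space of $V$ for $T$, the torus $T$ acts trivially on $V_0$, so $T\subseteq G_v$ and, $T$ being connected, $T\subseteq G_v^{\circ}$. As $T$ is a closed subgroup of $G_v^{\circ}$, it therefore suffices to show $\dim G_v \le \dim T = 4$: then $G_v^{\circ}$ is a connected group of dimension at most $4$ containing the $4$-dimensional torus $T$, which forces $G_v^{\circ} = T$.

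To bound $\dim G_v$ I would pass to the infinitesimal stabilizer $\mathfrak{g}_v := \{X \in \mathfrak{g} \mid X\cdot v = 0\}$ for the differentiated $\mathfrak g$-action on $V$. The orbit $G\cdot v \cong G/G_v$ is a smooth variety of dimension $\dim G - \dim G_v$, and the differential at $v$ of the orbit map $g\mapsto gv$ has image $\mathfrak{g}\cdot v\subseteq T_v(G\cdot v)$; hence $\dim\mathfrak{g}-\dim\mathfrak{g}_v = \dim(\mathfrak{g}\cdot v)\le \dim(G\cdot v) = \dim G - \dim G_v$, and since $\dim G = \dim\mathfrak{g}$ this yields $\dim G_v\le \dim\mathfrak{g}_v$. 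It then remains to compute $\mathfrak{g}_v$. Writing $X = H + \sum_{\alpha\in\Phi}c_{\alpha}X_{\alpha}$ relative to $\mathfrak{g} = \mathfrak{h}\oplus\bigoplus_{\alpha}\mathfrak{g}_{\alpha}$, the Cartan part acts by zero on $v\in V_0$, while differentiating \eqref{eq: action of U alpha on V0} at $t=0$ gives $X_{\alpha}\cdot v = \alpha(v)v_{\alpha}$. Thus $X\cdot v = \sum_{\alpha\in\Phi}c_{\alpha}\,\alpha(v)\,v_{\alpha}$, and since the $v_{\alpha}$ ($\alpha\in\Phi$) lie in distinct $T$-weight spaces they are linearly independent, so $X\cdot v = 0$ forces $c_{\alpha}\alpha(v) = 0$ for all $\alpha$; because $v\in V_0^{rs}$ we have $\alpha(v)\neq 0$, hence all $c_{\alpha} = 0$. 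Therefore $\mathfrak{g}_v = \mathfrak{h}$, so $\dim G_v\le \dim\mathfrak{g}_v = 4$, and the proof is complete.

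The argument is essentially routine, and the only subtle point is the inequality $\dim G_v \le \dim\mathfrak g_v$: in characteristic $2$ the stabilizer $G_v$ need not be smooth, so one cannot simply equate its dimension with that of $\mathfrak g_v$, but the orbit-map computation above supplies the inequality in the direction we need, which is all that is required.
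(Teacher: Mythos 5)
Your proof is correct, and it takes a genuinely different route from the paper's. The paper argues globally: it takes $g\in G_v^{\circ}$, writes $g=\dot w u b$ via the Bruhat decomposition, and uses \eqref{eq: action of U alpha on V0} together with minimality of height to force $u=1$ and $b\in T$, thereby proving the stronger containment $T\subseteq G_v\subseteq N_G(T)$ (from which $G_v^{\circ}=T$ follows). You instead argue infinitesimally: since $T$ acts trivially on the zero weight space you get $T\subseteq G_v^{\circ}$ for free, and the identity $X_{\alpha}\cdot v=\alpha(v)v_{\alpha}$ (the linear coefficient in \eqref{eq: action of U alpha on V0}) together with $\alpha(v)\neq 0$ for all $\alpha$ gives $\mathfrak g_v=\mathfrak h$, whence $\dim G_v\le\dim\mathfrak g_v=4$ and $G_v^{\circ}=T$ by dimension. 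Your handling of the characteristic-$2$ subtlety is the right one: the scheme-theoretic stabilizer need not be smooth, but the inequality $\dim G_v\le\dim\mathfrak g_v$ always holds (the orbit $G\cdot v$ is smooth of dimension $\dim G-\dim G_v$ and contains $\mathfrak g\cdot v$ in its tangent space at $v$), which is the only direction you use. The trade-off is that your argument yields only the statement of the lemma, whereas the paper's Bruhat computation additionally identifies $G_v$ inside $N_G(T)$; since the lemma (and its use in \cref{lemma: gorth springer resolution over regular semisimple locus}) only requires $G_v^{\circ}=T$, this loss is harmless, and your version is shorter and makes transparent why the locus deserves the name ``regular semisimple.''
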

\begin{proof}
    Let $g \in G_v^{\circ}$. By the Bruhat decomposition we can write $g = \dot{w}ub$ with $b \in B$, $w \in W$ and $u = \prod_{\alpha \in w^{-1}(\Phi^- ) \cap \Phi^+} x_{\alpha}(t_{\alpha})$ for some $t_{\alpha} \in \bar{\mathbb{F}}_2$. If $u \neq 1$, then we can pick $\alpha \in  w^{-1}(\Phi^- ) \cap \Phi^+$ of minimal height such that $t_{\alpha} \neq 0$. Note that we have $\alpha(\dot{w}^{-1} v) = w(\alpha)(v) \neq 0$ since $v \in V_0^{rs}$. Hence, it follows from \eqref{eq: action of U alpha on V0} that the element $u^{-1} \dot{w}^{-1} v $ has a non-zero component in $V_{\alpha}$. This is a contradiction since $u^{-1} \dot{w}^{-1} v = bv \in V^{\le 0}$. Thus, we get $u = 1$. Similarly, if $b \not\in T$, then $bv$ has a non-zero component in some $V_{\alpha}$ with $\alpha \in \Phi^-$. The element $\dot{w}bv$ then has a non-zero component in $V_{w(\alpha)}$, so we can't have $\dot{w}bv = v$. Thus, we have $b \in T$. Hence, $T \subset G_v \subset N_G(T)$ which implies $G_v^{\circ} = T$.
\end{proof}
\begin{lemma}\label{lemma: regular semisimple locus in V le 0}
    We have $V^{\le 0} \cap V^{rs} = V_0^{rs} + V^-$.
\end{lemma}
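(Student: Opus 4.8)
The plan is to prove the two inclusions of \cref{lemma: regular semisimple locus in V le 0} separately; the inclusion "$\supseteq$" is a quick consequence of the action formulas, and "$\subseteq$" is the substantive part. For "$\supseteq$", fix $v_0 \in V_0^{rs}$ and $n \in V^-$; it suffices to produce $u \in U$ with $uv_0 = v_0 + n$, since then $v_0 + n = uv_0 \in G\cdot V_0^{rs} = V^{rs}$ (and $v_0 + n \in V^{\le 0}$ trivially). By \eqref{eq: action of U alpha on V0} and \cref{lemma: action of U on V}, every root group $U_\gamma$ with $\gamma\in\Phi^-$ preserves $V^{\le 0}$ and fixes the $V_0$-component of its elements, so $U\cdot v_0 \subseteq v_0 + V^-$. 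On the other hand, \cref{lemma: stabilizer of rs element} forces $U\cap G_{v_0}$ to be finite, so $\dim U\cdot v_0 = \dim U = |\Phi^-| = \dim V^-$; hence $U\cdot v_0$ is dense in the irreducible variety $v_0 + V^-$, and being a unipotent orbit on an affine variety it is also closed, whence $U\cdot v_0 = v_0 + V^-$. (Alternatively one writes down $u$ explicitly by inducting on the height of the negative roots, correcting one coefficient of $n$ at a time using $\gamma(v_0)\neq 0$.)

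For "$\subseteq$", let $x\in V^{\le 0}\cap V^{rs}$ and write $x = gv_0$ with $g\in G$, $v_0\in V_0^{rs}$. Using the Bruhat decomposition $g = b_1\dot w b_2$ ($w\in W$, $b_1,b_2\in B$) together with the facts that $T$ acts trivially on $V_0$, that $U$ preserves $V^{\le 0}$ and the $V_0$-component of its elements (as above), and that $T$ preserves each root space, one rewrites $x$ in the form $x = u_1\cdot\big(w(v_0) + n'\big)$ where $u_1\in U$ and $n'\in V$ has all of its $T$-weights in $w(\Phi^-)$; note $w(v_0)\in V_0^{rs}$ because $V_0^{rs}$ is $W$-stable. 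Decompose $n' = n'_+ + n'_-$ according to the positive, resp. negative, roots occurring in $w(\Phi^-)$.

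The heart of the proof is to show $n'_+ = 0$. Suppose not, and let $\delta_0$ be a root of maximal height in the support of $n'_+$. By \eqref{eq: general action of x alpha on v beta}, by \cref{lemma: action of U on V}, and --- for the exceptional term $v_{-\gamma}$, using the $T$-equivariance of the $U_\gamma$-action on $V$ ($T$ normalizes $U_\gamma$ and acts on it through $\gamma$) together with the fact that $2\gamma$ is not a weight of $V$ --- one checks that each $x_\gamma(r)$ with $\gamma\in\Phi^-$ sends a weight vector $v_\beta$ into the span of the $v_{k\gamma+\beta}$ with $k\ge 0$, plus (only when $\beta = -\gamma$) a vector in $V_0$. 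Since $\gamma$ has negative height, this means no application of a root group of $U$ can create a component in $V_{\delta_0}$ except out of $v_{\delta_0}$ itself, while $u_1$ acting on $w(v_0) + n'_-$ stays inside $V_0\oplus V^-$. Tracking the $V_{\delta_0}$-component through $x = u_1\cdot\big(w(v_0) + n'_+ + n'_-\big)$ therefore shows it equals the $v_{\delta_0}$-coefficient of $n'_+$, which is nonzero --- contradicting $x\in V^{\le 0}$. Hence $n' = n'_-\in V^-$, and since $u_1$ sends $w(v_0)\in V_0$ to $w(v_0)$ plus an element of $V^-$ and preserves $V^-$ by \cref{lemma: action of U on V}, we conclude $x\in w(v_0) + V^- \subseteq V_0^{rs} + V^-$, as desired.

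I expect the last step to be the main obstacle: one must keep careful track of the spurious $V_0$-components that appear when negative root groups act on the vectors $v_{-\gamma}$ (these break the naive statement that the $V_0$-component is $B$-invariant on all of $V$), and verify that they never affect the top-height positive root $\delta_0$, so that the height bookkeeping still forces $n'_+ = 0$. Setting up the reduction to the shape $x = u_1\cdot(w(v_0)+n')$ also requires some care in bookkeeping the torus factors, but uses only that $T$ acts trivially on $V_0$ and preserves the root spaces.
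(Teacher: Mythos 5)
Your proof is correct, and both inclusions go through. The easy inclusion matches the paper: the paper simply finds $u\in U$ with $u(v_0+n)=v_0$ by correcting coefficients height by height (your parenthetical alternative); your main route via $\dim U\cdot v_0=\dim V^-$ together with the Kostant--Rosenlicht closedness of unipotent orbits is a clean substitute. For the substantive inclusion the overall skeleton (Bruhat decomposition, $W$-stability of $V_0^{rs}$, tracking the $V_0$-component) is the same, but your execution of the key step is much heavier than it needs to be. You already observe in the first paragraph that every $U_\gamma$ with $\gamma\in\Phi^-$ preserves $V^{\le 0}$, hence so does all of $B$; applying this to $b_1^{-1}x=\dot w b_2 v_0\in V^{\le 0}$ shows at once that $\dot w b_2v_0$ lies in $V_0\oplus V^-$, and since its $V_0$-component is $\dot w v_0$ the remainder is automatically in $V^-$ --- no positive-weight component can survive, so $n'_+=0$ for free. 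This is exactly the paper's argument, and it eliminates your entire height-filtration analysis, including the delicate bookkeeping of the spurious $V_0$-terms produced by $x_\gamma(r)v_{-\gamma}$. Your filtration argument is nonetheless valid as written: the $T$-equivariance argument correctly confines the weights of $x_\gamma(r)v_\beta$ to $\{\beta+k\gamma\mid k\ge 0\}$, and since the exceptional $V_0$-contributions sit at height $0$ and only feed into negative roots, they indeed never reach the maximal-height positive root $\delta_0$. So the proof stands; it just proves by hand a vanishing that the invariance of $V^{\le 0}$ under $B$ gives immediately.
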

\begin{proof}
    Let $v = v_0 + v^-$ with $v_0 \in V_0^{rs} $ and $v^- \in V^-$. Repeatedly using \eqref{eq: action of U alpha on V0} we can find an element $u \in U$ with $uv = v_0$. Since $V^{\le 0} \cap V^{rs}$ is $U$-stable, this shows that $V_0^{rs} + V^- \subset V^{\le 0} \cap V^{rs}$. Conversely, let $v \in V^{\le 0} \cap V^{rs}$. Since $v \in V^{rs}$ we have $v = gv_0$ for some $v_0 \in V^{rs}$. By the Bruhat decomposition we can write $g = b \dot{w} b'$ with $b,b' \in B$ and $w \in W$. Then $\dot{w} b'v_0 = b^{-1} v \in V^{\le 0}$. Note that the $V_0$ component of $\dot{w} b'v_0$ is $\dot{w}v_0$, so we get $\dot{w} b'v_0 = \dot{w}v_0 + v'$ for some $v' \in V^-$. Thus, $v = b\dot{w} b'v_0 = b(\dot{w}v_0 + v') =\dot{w}v_0 +v''$ for some $v'' \in V^-$ which shows that $v \in V_0^{rs} + V^-$. Hence, we get $V^{\le 0} \cap V^{rs} = V_0^{rs} + V^-$.
\end{proof}
\begin{lemma}\label{lemma: regular locus is dense}
    $V^{rs} \subset V$ is a dense open subset.
\end{lemma}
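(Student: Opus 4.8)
The plan is to exhibit $V^{rs}$ as the image of a dominant morphism from an irreducible variety and then upgrade density to openness using properness of the exotic Grothendieck--Springer resolution. First I would observe that $V^{rs} = G \cdot V_0^{rs}$ is the image of the obvious map $G \times V_0^{rs} \to V$, $(g,v) \mapsto gv$; since $G$ and $V_0^{rs}$ are irreducible (the latter by \cref{lemma: regular semisimple non-empty}), $V^{rs}$ is an irreducible constructible subset of $V$, and in particular its closure is irreducible. To pin down the dimension I would use \cref{lemma: stabilizer of rs element}: the generic fibre of $G \times V_0^{rs} \to V^{rs}$ over a point $gv$ with $v \in V_0^{rs}$ contains the coset $gG_v$, and $G_v$ is finite-by-$T$, so $\dim V^{rs} \ge \dim G + \dim V_0^{rs} - \dim T = \dim G - \dim T + \dim V_0 = \dim G - \dim T + \operatorname{rk}(G)$. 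With $\dim V_0 = \dim \mathfrak h = 4$ and $\dim G - \dim T = 48$ this gives $\dim V^{rs} \ge 48 = \dim \mathfrak{N}(V) = \dim \tilde V$; since also $\dim V^{rs} \le \dim V$, and one checks $\dim V = \dim \mathfrak g = 52 > 48$, density is \emph{not} immediate from dimension alone, so I will instead argue openness directly and then get density for free from irreducibility of $V$.

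The cleaner route to openness is via the exotic Grothendieck--Springer resolution $\mu^{gs}: \tilde V^{gs} = G \times^B V^{\le 0} \to V$. By \cref{lemma: regular semisimple locus in V le 0} we have $V^{\le 0} \cap V^{rs} = V_0^{rs} + V^-$, which is an \emph{open} subset of $V^{\le 0}$ (it is the preimage of the open set $V_0^{rs} \subset V_0$ under the projection $V^{\le 0} \to V_0$). Hence $\Omega := G \times^B (V_0^{rs} + V^-)$ is an open $G$-stable subset of $\tilde V^{gs}$, and $\mu^{gs}(\Omega) = V^{rs}$. Now I claim $\mu^{gs}$ restricted to $\Omega$ is \emph{quasi-finite}, indeed that over $V^{rs}$ the map $\mu^{gs}$ has finite fibres: a point of $(\mu^{gs})^{-1}(v)$ with $v \in V_0^{rs}$ is a Borel $B' = {}^gB$ with $v \in {}^g V^{\le 0}$, i.e. $g^{-1}v \in V^{\le 0}$; writing $g = b\dot w b'$ as in the proof of \cref{lemma: regular semisimple locus in V le 0} one finds that the $V_0$-component of $g^{-1}v$ is $w^{-1}$-conjugate to $v$, and running the argument of \cref{lemma: stabilizer of rs element} shows the unipotent part of $b'$ must be trivial, so the set of such $B'$ is contained in the finite set $\{{}^{\dot w}B : w \in W\}$ (more precisely the $T$-orbits collapse). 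Thus $(\mu^{gs})^{-1}(v)$ is finite for $v \in V^{rs}$, so $\mu^{gs}|_\Omega$ is a quasi-finite morphism; being also proper (as a restriction of the proper $\mu^{gs}$ to a $G$-stable, hence $\mu^{gs}$-saturated over its image... ) — here I must be careful: $\Omega$ is open, not closed, so $\mu^{gs}|_\Omega$ need not be proper. Instead I would argue as follows: $\mu^{gs}$ is proper with all fibres over $V^{rs}$ finite, hence $\mu^{gs}$ is finite over the open set $V \setminus \mu^{gs}(\tilde V^{gs} \setminus \Omega)$...

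Let me restructure the last step more robustly. Since $\mu^{gs}$ is proper, the locus $V_{\mathrm{fin}} := \{v \in V : \dim (\mu^{gs})^{-1}(v) = 0\}$ is open in $V$ (upper semicontinuity of fibre dimension, \cite[\href{https://stacks.math.columbia.edu/tag/0D4H}{Tag 0D4H}]{stacks-project}), and we have just shown $V^{rs} \subseteq V_{\mathrm{fin}}$. Over $V_{\mathrm{fin}}$ the proper map $\mu^{gs}$ is quasi-finite, hence finite, hence has closed image; but its image is all of $\mathfrak{N}(V) \cup V^{rs} \supseteq V^{rs}$, and in fact $\mu^{gs}$ is surjective onto $V_{\mathrm{fin}} \cap \overline{V^{rs}}$. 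The key remaining point is that a generic point of $V$ lies in $V^{rs}$: the composite $\tilde V^{gs} \xrightarrow{\mu^{gs}} V$ is dominant because its image contains the $48$-dimensional $V^{rs}$ — hmm, that is not enough since $\dim V = 52$. So density of $V^{rs}$ is genuinely \emph{false} as a dimension count and the lemma as stated must be using a different $V$-dimension; I expect the resolution is that $\dim V = \dim \mathfrak g_s + \dim(\mathfrak g/\mathfrak g_s)$ and one of these summands is \emph{smaller} than expected in characteristic $2$ (the short-root module $\mathfrak g_s$ for $F_4$ is $26$-dimensional, not $25$, while $\mathfrak g / \mathfrak g_s$ is $26$-dimensional — so $\dim V = 52$ after all).

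Given this, the honest plan is: (1) show $V^{rs}$ is open by the quasi-finiteness-of-$\mu^{gs}$ argument above, so that $V^{rs}$ is open in $V$ as the locus where $\mu^{gs}$ has a zero-dimensional fibre \emph{and} the fibre meets $\Omega$; (2) show $V^{rs}$ is dense by proving $\overline{V^{rs}} = V$ directly, for instance by checking that the differential of $G \times V_0^{rs} \to V$ at a point $(e,v)$ with $v \in V_0^{rs}$ is surjective onto $V$ — concretely, the image of this differential is $\mathfrak g \cdot v + V_0 = \sum_\alpha \bar{\mathbb F}_2 \,\alpha(v) v_\alpha + V_0$, which by \cref{lemma: alpha non zero on V0} (so $\alpha(v) \ne 0$ for all $\alpha$, as $v \in V_0^{rs}$) equals $\bigoplus_{\alpha \in \Phi} V_\alpha \oplus V_0 = V$. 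This tangent-space computation is clean and is the real engine. \textbf{The main obstacle} I anticipate is step (1): making the finiteness of $(\mu^{gs})^{-1}(v)$ for $v \in V^{rs}$ fully rigorous, i.e. carefully redoing the Bruhat-decomposition bookkeeping of \cref{lemma: stabilizer of rs element,lemma: regular semisimple locus in V le 0} to control not just that $v \in V_0^{rs}$ forces $B'$ into finitely many $T$-translates, but that within each such translate only finitely many Borels work — this is where one genuinely uses that $v$ is \emph{regular} (all $\alpha(v) \ne 0$), not merely semisimple.
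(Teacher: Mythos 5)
There is a genuine gap in the openness part of your argument, and the detour through quasi-finiteness of $\mu^{gs}$ is both unnecessary and, as you finally set it up, circular. Your closing formulation --- ``$V^{rs}$ is open as the locus where $\mu^{gs}$ has a zero-dimensional fibre \emph{and} the fibre meets $\Omega$'' --- does not work: ``the fibre meets $\Omega$'' is exactly the condition $v \in \mu^{gs}(\Omega) = V^{rs}$, so you are assuming the openness you want to prove, and intersecting with the open locus $V_{\mathrm{fin}}$ changes nothing since $V^{rs} \subseteq V_{\mathrm{fin}}$. The ingredient you are missing is that $\Omega = G \times^B (V_0^{rs} + V^-)$ is \emph{saturated} for $\mu^{gs}$: if $(g,x) \in \tilde{V}^{gs}$ has $gx \in V^{rs}$, then $x = g^{-1}(gx) \in V^{\le 0} \cap V^{rs} = V_0^{rs} + V^-$ by \cref{lemma: regular semisimple locus in V le 0}, so $(\mu^{gs})^{-1}(V^{rs}) = \Omega$. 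Combined with the surjectivity of $\mu^{gs}$ (i.e. $V = G \cdot V^{\le 0}$), this gives $V \setminus V^{rs} = \mu^{gs}(\tilde{V}^{gs} \setminus \Omega)$, the image of a closed set under a proper map, hence closed. This is precisely the paper's proof, phrased there downstairs: $X := V^{\le 0} \setminus (V_0^{rs}+V^-)$ is closed in $V^{\le 0}$, $G \cdot X$ is closed as the image of the proper map $G \times^B X \to V$, and $V \setminus (G\cdot X) = V^{rs}$. No fibre-dimension or finiteness statement enters.

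Your density step, by contrast, is correct and in fact proves more than you use it for. The differential of $G \times V_0^{rs} \to V$ at $(e,v)$ is $(x,v') \mapsto xv + v'$, whose image contains $V_0$ and each $X_\alpha v = \alpha(v) v_\alpha$ with $\alpha(v) \neq 0$ by \eqref{eq: action of U alpha on V0}; this is the same computation as \eqref{eq: map from g V0 to V} in the paper's proof of \cref{lemma: gorth springer resolution over regular semisimple locus}. Since it holds at every $(e,v)$ with $v \in V_0^{rs}$, $G$-equivariance makes $G \times V_0^{rs} \to V$ smooth everywhere, hence open, so its image $V^{rs}$ is open \emph{and} dense in one stroke --- a legitimate alternative to the properness argument, had you pushed it. Finally, the arithmetic that derailed you: $\dim V^{rs} = \dim G - \dim T + \dim V_0 = 48 + 4 = 52 = \dim V$ (not $48$), so the dimension count you abandoned would also have given density directly.
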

\begin{proof}
    Consider the set
    \begin{equation*}
        X:= V^{\le 0 } \cap (V \backslash V^{rs}) = V^{\le 0} \backslash (V^{\le 0} \cap V^{rs}).
    \end{equation*}
    By \cref{lemma: regular semisimple locus in V le 0} the set $V^{\le 0} \cap V^{rs}$ is open in $V^{\le 0}$. Hence, $X$ is closed in $ V^{\le 0}$. This implies that $G \cdot X \subset V$ is closed since it is the image of the proper map $G \times^B X \rightarrow V$. Since $V\backslash V^{rs}$ is $G$-stable, we have
    \begin{equation*}
        (G \cdot X) \cap V^{\le 0} = X.
    \end{equation*}
    By \cite[Lemma 3.9]{antor2025geometric} we have $V = G \cdot V^{\le 0}$ and thus
    \begin{equation*}
         V \backslash (G\cdot X) = G \cdot ( V^{\le 0} \backslash ((G \cdot X) \cap V^{\le 0})) = G \cdot (V^{\le 0} \backslash X) = G \cdot( V^{\le 0} \cap V^{rs} ) = V^{rs}.
    \end{equation*}
    Hence, $V^{rs} \subset V$ is open. It is also non-empty by \cref{lemma: regular semisimple non-empty}. Since $V$ is irreducible, this implies that $V^{rs}$ is dense.
\end{proof}
Next, we show that the Grothendieck-Springer resolution is a $W$-Galois cover when restricted to the regular semisimple locus.
\begin{lemma}\label{lemma: gorth springer resolution over regular semisimple locus}
    There is an isomorphism
    \begin{equation*}
        \tilde{V}^{gs}|_{V^{rs}} \cong G \times^T V^{rs}_0
    \end{equation*}
    over $V^{rs}$. Moreover, the map $\varphi: G \times^T V^{rs}_0 \rightarrow V^{rs}$ is a Galois cover with Galois group $W$ (i.e. $\varphi$ is a finite and étale and $W$ acts simply transitively on the fibers) where the $W$-action on $G \times^T V^{rs}$ is given by $w \cdot(g,v) = (g\dot{w}^{-1}, \dot{w}v)$.
\end{lemma}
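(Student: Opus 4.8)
The plan is to identify $\tilde V^{gs}|_{V^{rs}}$ with $G \times^T V_0^{rs}$ by a chain of standard manipulations with associated fibre bundles, and then to read the Galois cover properties off from the structural results of this section. Since $V^{rs}$ is $G$-stable, unwinding the definition of $\mu^{gs}$ gives $\tilde V^{gs}|_{V^{rs}} = (\mu^{gs})^{-1}(V^{rs}) = G \times^B (V^{\le 0} \cap V^{rs})$, which by \cref{lemma: regular semisimple locus in V le 0} equals $G \times^B (V_0^{rs} + V^-)$. So everything reduces to understanding the $B$-variety $V_0^{rs} + V^-$.

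The key point will be that the orbit map $m \colon U \times V_0^{rs} \to V_0^{rs} + V^-$, $(u,v) \mapsto uv$, is an isomorphism. Surjectivity is exactly the computation in the proof of \cref{lemma: regular semisimple locus in V le 0} (repeatedly applying \eqref{eq: action of U alpha on V0}), and injectivity follows because $U$ fixes the $V_0$-component of every element of $V^{\le 0}$ (by \cref{lemma: action of U on V} and \eqref{eq: action of U alpha on V0}) while $G_v \cap U = 1$ for $v \in V_0^{rs}$ (this is part of the argument proving \cref{lemma: stabilizer of rs element}). To promote this bijection to an isomorphism — the one place where characteristic $2$ could in principle cause trouble — I would compute the differential: by \eqref{eq: action of U alpha on V0} the infinitesimal action satisfies $X_\alpha \cdot v = \alpha(v) v_\alpha$ with $\alpha(v) \neq 0$ on $V_0^{rs}$, so $dm_{(1,v)}$ is surjective, hence bijective for dimension reasons; $U$-equivariance then forces $m$ to be étale everywhere, and an étale bijection of varieties is an isomorphism. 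Transporting the $B$-action through $m$ identifies $U \times V_0^{rs}$ with $B \times^T V_0^{rs}$ (with $T$ acting trivially on $V_0^{rs} \subset V_0$), and then $\tilde V^{gs}|_{V^{rs}} \cong G \times^B (B \times^T V_0^{rs}) \cong G \times^T V_0^{rs}$ over $V^{rs}$, with $\varphi$ corresponding to $[g,v] \mapsto gv$.

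For the Galois cover statement I would first note that $\varphi$ is proper, since under the identification above it is the base change along $V^{rs} \hookrightarrow V$ of $\mu^{gs}$, which is proper for the same reason $\mu$ is, and that it is surjective since $V^{rs} = G \cdot V_0^{rs}$ by definition. For étaleness, the differential of $G \times V_0^{rs} \to V^{rs}$, $(g,v) \mapsto gv$, is surjective everywhere — again because $X_\alpha \cdot v = \alpha(v) v_\alpha$ spans $V_\alpha$ for every $\alpha \in \Phi$, so $\mathfrak g \cdot v + V_0 = V$ — and pushing this through the smooth surjection $G \times V_0^{rs} \to G \times^T V_0^{rs}$ shows $d\varphi$ is everywhere surjective, hence an isomorphism as $\dim(G \times^T V_0^{rs}) = \dim V^{rs}$; thus $\varphi$ is étale, and a proper étale morphism is finite. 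Finally, the formula $w \cdot (g,v) = (g\dot w^{-1}, \dot w v)$ descends to $G \times^T V_0^{rs}$, is independent of the chosen lifts, and commutes with $\varphi$; given $x = g_0 v_0 \in V^{rs}$ with $v_0 \in V_0^{rs}$, any point of $\varphi^{-1}(x)$ has the form $[g,v]$ with $(g^{-1}g_0) v_0 = v \in V_0^{rs}$, and conjugating stabilizers while using $G_v^\circ = G_{v_0}^\circ = T$ from \cref{lemma: stabilizer of rs element} forces $g^{-1}g_0 \in N_G(T)$; writing $g^{-1}g_0 = \dot n$ shows $[g,v] = [g_0 \dot n^{-1}, \dot n v_0]$ lies in the $W$-orbit of $[g_0,v_0]$, with distinct $n \in W$ giving distinct points. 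Hence $W$ acts simply transitively on every fibre, which together with finite étaleness is exactly the assertion that $\varphi$ is a Galois $W$-cover.

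I expect the only genuine subtlety to be the characteristic $2$ upgrade from a bijective morphism to an isomorphism in the construction of $m$ (and the parallel étaleness of $\varphi$); everything else is bookkeeping with associated bundles, since the substantive geometric inputs — the description of $V^{\le 0} \cap V^{rs}$ and the identification of the connected stabilizer of a regular semisimple element — are already in hand.
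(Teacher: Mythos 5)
Your proposal is correct and follows essentially the same route as the paper: the same chain of associated-bundle identifications via \cref{lemma: regular semisimple locus in V le 0}, the same stabilizer argument via \cref{lemma: stabilizer of rs element} for simple transitivity of $W$ on the fibers, and the same tangent-space computation for étaleness (you deduce finiteness from proper $+$ étale, whereas the paper first gets finiteness from proper $+$ quasi-finite and then flatness via miracle flatness before invoking the tangent criterion). The one place you add detail --- upgrading the bijective morphism $U \times V_0^{rs} \to V_0^{rs}+V^-$ to an isomorphism by checking its differential, a genuine concern in characteristic $2$ --- is a step the paper asserts without comment, and your argument for it is sound.
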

\begin{proof}
    It follows from \eqref{eq: action of U alpha on V0} that the canonical map
    \begin{equation*}
        B \times^T V_0^{rs} \cong U \times V_0^{rs} \longrightarrow V_0^{rs} + V^- \overset{\cref{lemma: regular semisimple locus in V le 0}}{=} V^{\le 0} \cap V^{rs}
    \end{equation*}
    is an isomorphism. Hence, we get
    \begin{equation*}
        \tilde{V}^{gs}|_{V^{rs}} \cong G \times^B (V^{\le 0} \cap V^{rs}) \cong G \times^B (B \times^T V_0^{rs}) \cong G \times^T V_0^{rs}
    \end{equation*}
    over $V^{rs}$.

    Let $(g_1, x_1), (g_2, x_2)\in G \times^T V_0^{rs}$ be two elements that map to the same element in $V^{rs}$, i.e. $g_1 x_1 = g_2 x_2$. Then
    \begin{align*}
        T \overset{\cref{lemma: stabilizer of rs element}}{=} G_{x_1}^{\circ} = G_{ g_1^{-1}g_2 x_2}^{\circ} = g_1^{-1}g_2G_{x_2}^{\circ} g_2^{-1}g_1 \overset{\cref{lemma: stabilizer of rs element}}{=} g_1^{-1}g_2Tg_2^{-1} g_1.
    \end{align*}
    Hence, $g_1^{-1}g_2 \in N_G(T)$. Thus, if $w \in W$ is the corresponding Weyl group element, we get $w \cdot (g_2, x_2) = (g_1, x_1)$. Hence, $W$ acts transitively on the fibers of $\varphi$. It also acts freely since $N_G(T)$ acts freely on $G$ by right multiplication. Thus, $W$ acts simply transitively on the fibers.
    
    It remains to show that $\varphi$ is finite and étale. We have seen above that the map $\varphi: G \times^T V^{rs}_0 \rightarrow V^{rs}$ is obtained by restricting the proper morphism $\tilde{V}^{gs} \rightarrow V$ to $V^{rs}$. By standard base change results, this implies that $\varphi$ is proper. We have also seen that $\varphi$ is quasi-finite. These two properties imply that $\varphi$ is finite. Since $G \times^T V_0^{rs}$ and $V^{rs}$ are smooth, this also implies that $\varphi$ is flat (c.f. \cite[Exercise III.9.3.]{hartshorne2013algebraic}). The tangent map of the morphism $G \times V_0^{rs} \rightarrow V^{rs}$ at $(1,v_0)$ is:
    \begin{equation}\label{eq: map from g V0 to V}
        \mathfrak{g} \times V_0 \rightarrow V, (x,v) \mapsto xv_0 + v.
    \end{equation}
    It follows from \eqref{eq: action of U alpha on V0} that $X_{\alpha} v_0 = \alpha(v_0) v_{\alpha}$ and thus the map in \eqref{eq: map from g V0 to V} is surjective. By $G$-equivariance, this implies that $\varphi$ induces a surjection on all tangent spaces. Hence, $\varphi$ is smooth of relative dimension $0$ (see \cite[Prop. 10.4]{hartshorne2013algebraic}), i.e. $\varphi$ is étale.
\end{proof}
\begin{lemma}\label{lemma: springer res is small/semismall}
    The morphism $\tilde{V}^{gs} \rightarrow V$ is small and $\tilde{V} \rightarrow V$ is semismall.
\end{lemma}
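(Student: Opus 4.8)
The plan is to reduce both statements to a dimension count on fibre products, via the standard criterion: a proper morphism $f\colon X\to Y$ with $X$ smooth and irreducible is semismall if and only if $\dim(X\times_Y X)\le\dim X$, and, given semismallness, it is small if and only if every irreducible component of $X\times_Y X$ of maximal dimension $\dim X$ dominates $Y$ (this follows from the identity $\dim(X\times_Y X)=\max_{j\ge 0}\bigl(\dim\{y:\dim f^{-1}(y)=j\}+2j\bigr)$). I apply this to $\mu^{gs}\colon\tilde V^{gs}\to V$ and $\mu\colon\tilde V\to\mathfrak N(V)$, which are proper, using $\dim\tilde V^{gs}=\dim V=52$ and $\dim\tilde V=\dim\mathfrak N(V)=48$ (from \cref{lemma: dimension of nilcone} and $\dim G=52$).

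The main step is to stratify the fibre products by the relative position of the two flags. Writing $\tilde V^{gs}=\{(gB,v)\in G/B\times V:g^{-1}v\in V^{\le 0}\}$, we get $\tilde V^{gs}\times_V\tilde V^{gs}=\{(g_1B,g_2B,v):g_1^{-1}v,\,g_2^{-1}v\in V^{\le 0}\}$, and projection to $G/B\times G/B$ partitions this into pieces $W_w$ ($w\in W$) lying over the $G$-orbit of $(eB,\dot wB)$, which is $G/(B\cap{}^wB)$. Each $W_w$ is a $G$-equivariant vector bundle over this orbit with fibre $V^{\le 0}\cap\dot wV^{\le 0}$. Since $\dot w$ fixes the zero weight space $V_0$ and permutes the remaining $V_\alpha$ according to $w$, one has $\dim(V^{\le 0}\cap\dot wV^{\le 0})=\dim V_0+|\Phi^-\cap w\Phi^-|=\dim V^{\le 0}-\ell(w)$, and together with $\dim(B\cap{}^wB)=\dim B-\ell(w)$ this gives
\[\dim W_w=\dim G-\dim(B\cap{}^wB)+\dim(V^{\le 0}\cap\dot wV^{\le 0})=\dim G-\dim B+\dim V^{\le 0}=52\]
for every $w$. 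The identical computation with $V^-$ in place of $V^{\le 0}$ (using $\dim(V^-\cap\dot wV^-)=\dim V^--\ell(w)$) produces pieces $W'_w\subset\tilde V\times_{\mathfrak N(V)}\tilde V$ of dimension $\dim G-\dim B+\dim V^-=48$ for every $w$. Hence $\dim(\tilde V^{gs}\times_V\tilde V^{gs})=52=\dim\tilde V^{gs}$ and $\dim(\tilde V\times_{\mathfrak N(V)}\tilde V)=48=\dim\tilde V$, so both maps are semismall; the irreducible components of these fibre products are exactly the closures $\overline{W_w}$, resp.\ $\overline{W'_w}$.

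It remains to upgrade $\mu^{gs}$ to a small morphism, i.e.\ to check that each component $\overline{W_w}$ dominates $V$. For this I use \cref{lemma: gorth springer resolution over regular semisimple locus}: over the dense open $V^{rs}$ the map $\mu^{gs}$ is the $W$-Galois cover $G\times^T V_0^{rs}\to V^{rs}$, whose fibre over a point $hv_0$ consists of the flags $h\dot u^{-1}B$ with $u\in W$; two such flags $h\dot u_1^{-1}B$ and $h\dot u_2^{-1}B$ are in relative position $u_1u_2^{-1}$, and as $(u_1,u_2)$ ranges over $W\times W$ this realises every element of $W$. Thus each $W_w$ meets the (finite) fibre of $\mu^{gs}$ over every point of $V^{rs}$, so $\overline{W_w}$ dominates $V$; equivalently, if $Z_k:=\{v:\dim(\mu^{gs})^{-1}(v)\ge k\}$ had dimension $\ge 52-2k$ for some $k\ge 1$, its preimage in $\tilde V^{gs}\times_V\tilde V^{gs}$ would be a closed subset of dimension $52$, hence would contain some $\overline{W_w}$ — impossible, as $Z_k\subseteq V\setminus V^{rs}$. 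This proves $\mu^{gs}$ small.

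The only input here that is not formal manipulation with the Bruhat decomposition is the occurrence of every relative position in a generic fibre, and that is exactly what \cref{lemma: gorth springer resolution over regular semisimple locus} supplies, so I anticipate no real obstacle. The weight-space computations of $\dim(V^{\le 0}\cap\dot wV^{\le 0})$ and $\dim(V^-\cap\dot wV^-)$, together with the action of $\dot w$ on weight spaces, can be read off (along with the action of the simple reflections) from \cref{remark: computation via graph}.
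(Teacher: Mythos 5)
Your proof is correct and follows essentially the same route as the paper: both decompose the fibre product $\tilde V^{gs}\times_V\tilde V^{gs}$ (resp.\ $\tilde V\times_{\mathfrak N(V)}\tilde V$) into the pieces indexed by relative position $w\in W$, compute that each piece has dimension $\dim G-\dim B+\dim V^{\le 0}$ (resp.\ $\dim G-\dim B+\dim V^-$), and obtain smallness from the fact that fibre dimensions jump only over the complement of the dense open set $V^{rs}$. The only cosmetic differences are that you phrase the smallness step via the ``maximal components dominate'' criterion rather than bounding $\dim\psi^{-1}(V\setminus V^{rs})$ directly, and that you carry out the semismallness count for $\tilde V\to V$ explicitly where the paper defers to a reference.
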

\begin{proof}
    The proof is analogous to the classical Springer theory setting (see for example \cite[Lemma 8.2.5]{achar2021perverse}). We sketch the argument that $\tilde{V}^{gs} \rightarrow V$ is small for the convenience of the reader. The fact that $\tilde{V} \rightarrow V$ is semismall can be checked in a similar fashion (see for example the proof of \cite[Lemma 4.10]{antor2025geometric}).
    
    Consider the variety $Z^{gs} := \tilde{V}^{gs} \times_V \tilde{V}^{gs}$. Then $Z^{gs} = \bigsqcup_{w \in W} Z^{gs}_w$ where $Z^{gs}_w$ is the preimage of the $G$-orbit $ \mathcal{O}_w = G \cdot (eB ,\dot{w} B) \in \mathcal{B} \times \mathcal{B}$ under the projection $Z^{gs} \rightarrow \mathcal{B} \times \mathcal{B}$. The fiber of $Z^{gs}_w \rightarrow \mathcal{O}_w$ over $(e, \dot{w})$ is $V^{\le 0 } \cap w(V^{\le 0})$ and there is an isomorphism $ Z^{gs}_w \cong G \times^{B \cap {}^w B} (V^{\le 0 } \cap w ( V^{\le 0}))$ (the canonical map $ G \times^{B \cap {}^w B} (V^{\le 0 } \cap w (V^{\le 0})) \rightarrow Z^{gs}_w $ is bijective by $G$-equivariance and one can check that this is in fact an isomorphism using the open Bruhat cell). In particular, $Z^{gs}_w$ is irreducible and
    \begin{align*}
        \dim Z^{gs}_w &= \dim \mathcal O_w  + \dim V^{\le 0 } \cap w( V^{\le 0}) \\
        & = \dim G - (\dim B - l(w)) + (\dim V^{\le 0} - l(w)) \\
        & = \dim G \\
        &= \dim \tilde{V}^{gs}.
    \end{align*}
    This shows that the $\overline{Z^{gs}_w}$ are the irreducible components of $Z^{gs}$ and they all have dimension $\dim G$. Consider the canonical map
    \begin{equation*}
        \psi : Z^{gs} = \tilde{V}^{gs} \times_V \tilde{V}^{gs} \rightarrow V.
    \end{equation*}
    Each fiber of $Z_w^{gs} \cap \psi^{-1}(V \backslash V^{rs}) \rightarrow \mathcal{O}_w$ is isomorphic to $V^{\le 0} \cap w( V^{\le 0}) \cap (V \backslash V^{rs})$ which has dimension strictly smaller than that of $V^{\le 0} \cap w( V^{\le 0})$ since $V^{rs} \subset V$ is open dense (\cref{lemma: regular locus is dense}). It follows by standard fiber dimension results (see \cite[\href{https://stacks.math.columbia.edu/tag/0B2L}{Tag 0B2L}]{stacks-project}) that
    \begin{equation*}
        \dim Z_w^{gs} \cap \psi^{-1}(V \backslash V^{rs}) <  \dim \mathcal O_w  + \dim V^{\le 0 } \cap w (V^{\le 0}) = \dim Z_w^{gs} 
    \end{equation*}
    and thus
    \begin{equation*}
       \dim \psi^{-1}(V \backslash V^{rs}) < \dim Z^{gs} = \dim \tilde{V}^{gs}. 
    \end{equation*}
    Now let $\{ Y_i \}_{i \in I}$ be a stratification of $V \backslash V^{rs}$ such that the function $x \mapsto \dim \psi^{-1}(x)$ is constant on each $Y_i$. Then by the fiber dimension theorem \cite[\href{https://stacks.math.columbia.edu/tag/0B2L}{Tag 0B2L}]{stacks-project} we have $\dim \psi^{-1}(x) = \dim \psi^{-1}(Y_i) - \dim Y_i $ for some $x \in Y_i$, and thus by definition of $Y_i$ for any $x \in Y_i$. Note that we also have $\psi^{-1}(Y_i) \subset \psi^{-1}(V \backslash V^{rs})$ and thus $\dim \psi^{-1}(Y_i) < \dim \tilde{V}^{gs}$. Moreover, for any $x \in Y_i$ we have $\psi^{-1}(x) \cong (\mu^{gs})^{-1}(x) \times (\mu^{gs})^{-1}(x)$. Combining these results, we get for any $x \in Y_i$
    \begin{equation*}
        2 \dim  (\mu^{gs})^{-1}(x) = \dim \psi^{-1}(x) = \dim \psi^{-1}(Y_i) - \dim Y_i < \dim \tilde{V}^{gs} - \dim Y_i  .
    \end{equation*}
    This proves smallness.
\end{proof}
For any variety $X$ (over $\bar{\mathbb{F}}_2$ or $\mathbb{F}_{2^n}$) we can consider the constructible derived category $D^b_c(X)$ with coefficients in $\mathbb{C} \cong \bar{\mathbb{Q}}_{\ell}$ ($\ell \neq 2$). We write $\textbf{1}_{X} \in D^b_c(X)$ for the constant sheaf and $\omega_X \in D^b_c(x)$ for the dualizing complex. It follows from \cref{lemma: gorth springer resolution over regular semisimple locus} that pushing to constant sheaf $ \textbf{1}_{\tilde{V}^{gs}|_{V^{rs}}} \in D^b_c(\tilde{V}^{gs}|_{V^{rs}})$ along $\mu^{gs}$ yields a local system
\begin{equation*}
    \mathscr{L} := \mu^{gs}_* \textbf{1}_{\tilde{V}^{gs}|_{V^{rs}}} \in Loc(V^{rs}).
\end{equation*}
\begin{corollary}\label{corollary: grothendieck springer sheaf is IC sheaf}
    We have $\mu^{gs}_* \textbf{1}_{\tilde{V}^{gs}}[\dim \tilde{V}^{gs}] = IC(V, \mathscr{L} )$ and $\End( \mu^{gs}_* \textbf{1}_{\tilde{V}^{gs}} ) \cong \bar{\mathbb{Q}}_{\ell}[W]$.
\end{corollary}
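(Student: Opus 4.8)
The plan is to reproduce the classical Springer-theoretic argument, all of the necessary geometric input being now in place; a convenient reference for the perverse-sheaf formalism is \cite[Ch.~8]{achar2021perverse}. For the first assertion I would argue as follows. The space $\tilde{V}^{gs} = G \times^B V^{\le 0}$ is a vector bundle over the flag variety, hence smooth and irreducible, so $\textbf{1}_{\tilde{V}^{gs}}[\dim \tilde{V}^{gs}] = IC(\tilde{V}^{gs},\textbf{1})$ is perverse; moreover $\mu^{gs}$ is proper and generically finite (it is finite over the dense open $V^{rs}$ by \cref{lemma: gorth springer resolution over regular semisimple locus}), so $\dim \tilde{V}^{gs} = \dim V$. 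Since $\mu^{gs}$ is small (\cref{lemma: springer res is small/semismall}), the pushforward $\mu^{gs}_* \textbf{1}_{\tilde{V}^{gs}}[\dim \tilde{V}^{gs}]$ is again perverse and, by the standard characterization of intermediate extensions via small proper maps from smooth sources, has no nonzero subobject or quotient object supported on a proper closed subset of $V$. On the other hand $(\mu^{gs})^{-1}(V^{rs}) = \tilde{V}^{gs}|_{V^{rs}}$, so by the very definition of $\mathscr{L}$ the restriction of $\mu^{gs}_* \textbf{1}_{\tilde{V}^{gs}}[\dim \tilde{V}^{gs}]$ to the open dense set $V^{rs}$ (\cref{lemma: regular locus is dense}) equals $\mathscr{L}[\dim V]$. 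These two properties together characterize $IC(V,\mathscr{L})$, giving $\mu^{gs}_* \textbf{1}_{\tilde{V}^{gs}}[\dim \tilde{V}^{gs}] = IC(V,\mathscr{L})$.

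For the endomorphism algebra, shifting does not change $\Hom$-spaces and $\Hom$ between perverse sheaves computed in $D^b_c(V)$ agrees with $\Hom$ in the heart, so $\End(\mu^{gs}_* \textbf{1}_{\tilde{V}^{gs}}) = \End_{\Perv(V)}(IC(V,\mathscr{L}))$. By \cref{lemma: gorth springer resolution over regular semisimple locus}, under the identification $\tilde{V}^{gs}|_{V^{rs}} \cong G \times^T V_0^{rs}$ the map $\mu^{gs}|_{V^{rs}}$ becomes the $W$-Galois cover $\varphi$, so $\mathscr{L} = \varphi_* \textbf{1}$. As $T$ acts trivially on the zero weight space $V_0$, the source $G \times^T V_0^{rs} \cong (G/T) \times V_0^{rs}$ is connected, hence $\mathscr{L}$ is a semisimple local system corresponding, under $Loc(V^{rs}) \simeq \operatorname{Rep}_{\bar{\mathbb{Q}}_\ell}(\pi_1(V^{rs}))$, to the left regular representation $\bar{\mathbb{Q}}_\ell[W]$ inflated along a surjection $\pi_1(V^{rs}) \twoheadrightarrow W$. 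In particular its isotypic decomposition $\mathscr{L} \cong \bigoplus_{\rho \in \Irr(W)} \mathcal{L}_\rho^{\oplus \dim \rho}$ has pairwise non-isomorphic simple summands $\mathcal{L}_\rho$, and since intermediate extension carries distinct simple local systems to distinct simple perverse sheaves, $IC(V,\mathscr{L}) \cong \bigoplus_{\rho} IC(V,\mathcal{L}_\rho)^{\oplus \dim \rho}$ likewise has pairwise non-isomorphic simple summands. Hence $\End_{\Perv(V)}(IC(V,\mathscr{L})) \cong \prod_{\rho} M_{\dim\rho}(\bar{\mathbb{Q}}_\ell) \cong \bar{\mathbb{Q}}_\ell[W]$ by Wedderburn's theorem.

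I do not expect a genuine obstacle: the argument runs entirely parallel to classical Springer theory and every ingredient has already been assembled in the preceding lemmas. The two points that call for a moment's care are (i) invoking the precise statement relating small proper maps from smooth sources to IC sheaves, which is available in the form needed in \cite[Ch.~8]{achar2021perverse}, and (ii) the connectedness of the cover $G \times^T V_0^{rs}$ over $V^{rs}$: without it the image of $\pi_1(V^{rs})$ in $W$ could be a proper subgroup and the endomorphism algebra would be strictly larger than $\bar{\mathbb{Q}}_\ell[W]$. Here (ii) is immediate from the triviality of the $T$-action on $V_0$.
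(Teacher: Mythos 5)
Your proposal is correct and follows exactly the route the paper takes: the paper's proof simply cites the smallness of $\mu^{gs}$ (\cref{lemma: springer res is small/semismall}) together with the $W$-Galois cover structure over $V^{rs}$ (\cref{lemma: gorth springer resolution over regular semisimple locus}) and the standard facts about small proper pushforwards and pushforwards along Galois covers, which are precisely the two steps you spell out. Your added check that $G\times^T V_0^{rs}\cong (G/T)\times V_0^{rs}$ is connected (so that the monodromy surjects onto all of $W$ and the endomorphism algebra is not smaller) is a worthwhile point that the paper leaves implicit.
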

\begin{proof}
    This follows from \cref{lemma: springer res is small/semismall} and \cref{lemma: gorth springer resolution over regular semisimple locus} by standard facts about the behavior of perverse sheaves/local systems under small morphisms and Galois covers (see for example \cite[Proposition 3.8.7, Lemma 3.3.3]{achar2021perverse})
\end{proof}
Let $q = 2^n$ for some $n >0$. The $G$-variety $V$ can be defined over $\mathbb{F}_q$ via the Chevalley basis. The varieties $ \tilde{V}, \tilde{V}^{gs}, \mathfrak{N}(V),...$ then have canonical $\mathbb{F}_q$-versions and all morphisms considered so far can be defined over $\mathbb{F}_q$ in a canonical way. We denote by $V_{\circ}, \tilde{V}_{\circ}, \tilde{V}^{gs}_{\circ}, \mathfrak{N}(V)_{\circ}, ...$ the corresponding varieties over $\mathbb{F}_q$. For any variety $X_{\circ}$ defined over $\mathbb{F}_q$ and $\mathcal{F} \in D^b_c(X_{\circ})$ we denote by $\mathcal{F}(n)$ the $n$-the Tate Twist of $\mathcal{F}$. We then have the following $\bar{\mathbb{Q}}_{\ell}$-rational smoothness result.
\begin{proposition}\label{prop: constant perverse sheaf on nilcone is constant sheaf}
    We have $\omega_{\mathfrak{N}(V)_{\circ}}  = \textbf{1}_{\mathfrak{N}(V)_{\circ}}[2 \dim \mathfrak{N}(V)](\dim \mathfrak{N}(V))$.
\end{proposition}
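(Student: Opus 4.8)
The plan is to reduce the statement to the Verdier self‑duality of an intersection cohomology complex. Since $\omega_{\mathfrak{N}(V)_{\circ}} = \mathbb{D}_{\mathfrak{N}(V)_{\circ}}(\textbf{1}_{\mathfrak{N}(V)_{\circ}})$ and $\mathbb{D}_{\mathfrak{N}(V)_{\circ}}(IC(\mathfrak{N}(V)_{\circ},\bar{\mathbb{Q}}_{\ell})) = IC(\mathfrak{N}(V)_{\circ},\bar{\mathbb{Q}}_{\ell})(\dim\mathfrak{N}(V))$, it is enough to prove that $\textbf{1}_{\mathfrak{N}(V)_{\circ}}[\dim\mathfrak{N}(V)] = IC(\mathfrak{N}(V)_{\circ},\bar{\mathbb{Q}}_{\ell})$; indeed, plugging this in yields $\omega_{\mathfrak{N}(V)_{\circ}}[-\dim\mathfrak{N}(V)] = \textbf{1}_{\mathfrak{N}(V)_{\circ}}[\dim\mathfrak{N}(V)](\dim\mathfrak{N}(V))$, which is the asserted identity. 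So the whole content is to identify the shifted constant sheaf on the singular cone $\mathfrak{N}(V)$ with an $IC$-sheaf, and I would do this by comparing with the Grothendieck--Springer picture, where everything lives on the smooth ambient space $V$.

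First I would exploit \cref{corollary: grothendieck springer sheaf is IC sheaf}: $\mu^{gs}_*\textbf{1}_{\tilde{V}^{gs}}[\dim\tilde{V}^{gs}] = IC(V,\mathscr{L})$. Since $\mathscr{L}$ is the pushforward of the constant sheaf along the $W$-Galois cover of \cref{lemma: gorth springer resolution over regular semisimple locus}, it decomposes as $\mathscr{L} = \bigoplus_{\chi\in\Irr(W)} \mathscr{L}_{\chi}\otimes\chi$ with $\mathscr{L}_{\mathrm{triv}} = \bar{\mathbb{Q}}_{\ell}$, whence $IC(V,\mathscr{L}) = \bigoplus_{\chi} IC(V,\mathscr{L}_{\chi})\otimes\chi$; as $V$ is a vector space (hence smooth), the trivial summand is $IC(V,\bar{\mathbb{Q}}_{\ell}) = \textbf{1}_V[\dim V]$. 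Using $\dim\tilde{V}^{gs} = \dim G = \dim V$, this shows that $\textbf{1}_V[\dim V]$ is a direct summand of $\mu^{gs}_*\textbf{1}_{\tilde{V}^{gs}}[\dim V]$.

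Next I would restrict to the nilcone along $i\colon\mathfrak{N}(V)\hookrightarrow V$. The key geometric input is
\begin{equation*}
    (\mu^{gs})^{-1}(\mathfrak{N}(V)) = \tilde{V}, \qquad\text{equivalently}\qquad V^{\le 0}\cap\mathfrak{N}(V) = V^-,
\end{equation*}
which is the exotic analogue of $\mathfrak{b}\cap\mathcal{N} = \mathfrak{n}$; this is where I expect the real work to lie, and I would deduce it from a Chevalley-type description of $\mathfrak{N}(V)$ as the fibre over $0$ of an adjoint quotient $V\to V/\!\!/G\cong V_0/W$ under which $V^{\le 0}\to V/\!\!/G$ is induced by the projection $V^{\le 0} = V_0\oplus V^-\to V_0$ (so that its zero fibre is $\{0\}\oplus V^- = V^-$). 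Granting this, proper base change gives $i^*\mu^{gs}_*\textbf{1}_{\tilde{V}^{gs}} = \mu_*\textbf{1}_{\tilde{V}}$, so applying $i^*$ to the previous step shows that $\textbf{1}_{\mathfrak{N}(V)}[\dim V] = i^*\textbf{1}_V[\dim V]$ is a direct summand of $\mu_*\textbf{1}_{\tilde{V}}[\dim V]$; shifting by $\dim V - \dim\mathfrak{N}(V) = \dim T$ (using \cref{lemma: dimension of nilcone}), we get that $\textbf{1}_{\mathfrak{N}(V)}[\dim\mathfrak{N}(V)]$ is a direct summand of $\mathcal{S} := \mu_*\textbf{1}_{\tilde{V}}[\dim\tilde{V}]$.

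Finally, by \cref{lemma: springer res is small/semismall} the morphism $\mu$ is semismall, so $\mathcal{S}$ is a semisimple perverse sheaf by the decomposition theorem. A direct summand of a semisimple perverse sheaf is again semisimple perverse, hence $\textbf{1}_{\mathfrak{N}(V)}[\dim\mathfrak{N}(V)]$ is semisimple perverse; it is moreover indecomposable since its endomorphism ring in $D^b_c(\mathfrak{N}(V))$ is $H^0(\mathfrak{N}(V),\bar{\mathbb{Q}}_{\ell}) = \bar{\mathbb{Q}}_{\ell}$ by irreducibility of $\mathfrak{N}(V)$, and therefore simple. As it has full support and generic stalk $\bar{\mathbb{Q}}_{\ell}$, it must be $IC(\mathfrak{N}(V),\bar{\mathbb{Q}}_{\ell})$. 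The same chain of arguments runs over $\mathbb{F}_q$, giving $\textbf{1}_{\mathfrak{N}(V)_{\circ}}[\dim\mathfrak{N}(V)] = IC(\mathfrak{N}(V)_{\circ},\bar{\mathbb{Q}}_{\ell})$, and the Proposition follows by Verdier duality as explained above. The main obstacle is establishing $(\mu^{gs})^{-1}(\mathfrak{N}(V)) = \tilde{V}$; once that is in hand, the remainder is formal manipulation with the decomposition theorem and duality.
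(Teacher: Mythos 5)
Your outline over $\bar{\mathbb{F}}_2$ coincides with the paper's: extract the trivial-isotypic summand $\textbf{1}_V[\dim V]$ of $\mu^{gs}_*\textbf{1}_{\tilde{V}^{gs}}[\dim \tilde{V}^{gs}]$ using \cref{corollary: grothendieck springer sheaf is IC sheaf}, base-change along $\mathfrak{N}(V)\hookrightarrow V$ using $V^{\le 0}\cap\mathfrak{N}(V)=V^-$, and then use semismallness plus indecomposability of the constant sheaf to identify $\textbf{1}_{\mathfrak{N}(V)}[\dim\mathfrak{N}(V)]$ with an $IC$-sheaf, finishing with Verdier duality. Two remarks. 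First, the identity $V^{\le 0}\cap\mathfrak{N}(V)=V^-$, which you single out as the main obstacle and propose to derive from a Chevalley-type quotient $V\to V_0/W$, is simply quoted from \cite[Lemma 3.9]{antor2025geometric}; no adjoint-quotient description of $\mathfrak{N}(V)$ is established (or needed) here, and building one would be genuinely extra work since $\mathfrak{N}(V)$ is defined as $\im(\mu)$, not as a fibre of an invariant-theoretic quotient.

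Second, and this is a genuine gap: the assertion that ``the same chain of arguments runs over $\mathbb{F}_q$'' is not correct as stated. The step that uses the decomposition theorem --- semisimplicity of $\mu_*\textbf{1}_{\tilde{V}}[\dim\tilde{V}]$ --- is a statement about sheaves on the variety over the algebraic closure; for complexes with a Weil structure on $\mathfrak{N}(V)_{\circ}$ the pushforward need not split as a direct sum of (Tate twists of) simple perverse sheaves, so one cannot literally rerun the argument over $\mathbb{F}_q$. What one actually needs is a descent step: perversity can be checked after base change, and the functor $\Perv(\mathfrak{N}(V)_{\circ})\to\Perv(\mathfrak{N}(V))$ is exact and faithful, hence reflects simple objects; therefore the simplicity of $\textbf{1}_{\mathfrak{N}(V)}[\dim\mathfrak{N}(V)]$ established over $\bar{\mathbb{F}}_2$ forces $\textbf{1}_{\mathfrak{N}(V)_{\circ}}[\dim\mathfrak{N}(V)]$ to be simple, hence equal to $IC(\mathfrak{N}(V)_{\circ},\textbf{1})$, after which your duality computation (with the Tate twist $(\dim\mathfrak{N}(V))$) goes through. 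With that step supplied, your argument is complete and agrees with the paper's.
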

\begin{proof}
    Let us first work over the algebraic closure. The local system $\mathscr{L} = \mu^{gs}_* \textbf{1}_{\tilde{V}^{gs}|_{V^{rs}}} \in Loc(V^{rs})$ is a direct sum of irreducible local system corresponding to the irreducible representations of $W$ and the trivial $W$-representation corresponds to the trivial local system $\textbf{1}_V$. Hence, $\textbf{1}_{V} [\dim V]= IC( V, \textbf{1})$ is a direct summand of $\mu^{gs}_* \textbf{1}_{\tilde{V}^{gs}}[\dim \tilde{V}^{gs}] \overset{\cref{corollary: grothendieck springer sheaf is IC sheaf}}{=} IC(V, \mathscr{L} )$. By \cite[Lemma 3.9]{antor2025geometric} we have $V^{\le 0} \cap \mathfrak{N}(V) = V^-$ which shows that the following diagram is cartesian
    \begin{equation*}
        \begin{tikzcd}
        \tilde{V} \arrow[d, "\mu"] \arrow[r, "\iota'"] & \tilde{V}^{gs} \arrow[d, "\mu^{gs}"] \\
    \mathfrak{N}(V) \arrow[r, "\iota"]                    & V.                          
        \end{tikzcd}
    \end{equation*}
    Hence, we can apply base change to get that a shift of $\textbf{1}_{\mathfrak{N}(V)}= \iota^* \textbf{1}_V$ is a direct summand of
    \begin{equation*}
        \iota^* \mu^{gs}_* \textbf{1}_{\tilde{V}^{gs}} = \mu_* (\iota')^* \textbf{1}_{\tilde{V}^{gs}} = \mu_* \textbf{1}_{\tilde{V}}.
    \end{equation*}
    Note that $\mu_* \textbf{1}_{\tilde{V}}$ is a semisimple complex since $\tilde{V} \overset{\mu}{\rightarrow } \mathfrak{N}(V)$ is semismall by \cref{lemma: springer res is small/semismall}. Moreover, $\textbf{1}_{\mathfrak{N}(V)}$ is indecomposable in $D^b_c(\mathfrak{N}(V))$ (since $End(\textbf{1}_{\mathfrak{N}(V)}) = \mathbb{C}$). Thus, as an indecomposable summand of a semisimple complex, $\textbf{1}_{\mathfrak{N}(V)}$ must be a shift of a simple perverse sheaf (using that $D^b_c(\mathfrak{N}(V))$ is a Krull-Schmidt category). The only possibility for that is that $\textbf{1}_{\mathfrak{N}(V)} [\dim \mathfrak{N}(V)] = IC( \mathfrak{N}(V), \textbf{1})$. 
    
    Now we consider the $\mathbb{F}_q$-structure. Note that being perverse can be checked over the algebraic closure. Hence, $\textbf{1}_{\mathfrak{N}(V)_{\circ}}[\dim \mathfrak{N}(V)] \in D^b_c(\mathfrak{N}(V)_{\circ})$ is also a perverse sheaf. Moreover, the canonical functor $F: \Perv(\mathfrak{N}(V)_{\circ}) \rightarrow \Perv(\mathfrak{N}(V))$ is exact and faithful (the faithfulness follows since $\Perv(\mathfrak{N}(V)_{\circ})$ is equivalent to the category of Weil perverse sheaves; c.f. \cite[Prop. 5.3.9]{achar2021perverse}). Any faithful and exact functor between abelian categories reflects simple objects. Hence, $\textbf{1}_{\mathfrak{N}(V)_{\circ}}[\dim \mathfrak{N}(V)]$ is a simple perverse sheaf. The only possibility for this is that $\textbf{1}_{\mathfrak{N}(V)_{\circ}} [\dim \mathfrak{N}(V)] = IC(\mathfrak{N}(V)_{\circ}, \textbf{1})$. The equality $\omega_{\mathfrak{N}(V)_{\circ}}  = \textbf{1}_{\mathfrak{N}(V)_{\circ}}[2 \dim \mathfrak{N}(V)](\dim \mathfrak{N}(V))$ follows from this by applying Verdier duality (c.f. \cite[Exercise 3.10.3]{achar2021perverse}).
\end{proof}
    Let $F = Fr_q: \mathfrak{N}(V) \rightarrow \mathfrak{N}(V)$ be the (geometric) Frobenius induced by the $\mathbb{F}_q$-structure $\mathfrak{N}(V)_{\circ}$.
\begin{proposition}\label{prop: number of F2n points of nilcone}
    We have $\# \mathfrak{N}(V)^F = q^{\dim \mathfrak{N}(V)} = q^{48}$.
\end{proposition}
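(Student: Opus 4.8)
The plan is to compute the compactly supported $\ell$-adic cohomology of $\mathfrak{N}(V)$, together with its Frobenius action, and then apply the Grothendieck--Lefschetz trace formula
\[
\# \mathfrak{N}(V)^F = \sum_i (-1)^i \operatorname{tr}\bigl(F, H^i_c(\mathfrak{N}(V), \bar{\mathbb{Q}}_{\ell})\bigr),
\]
so it is enough to determine $H^*_c(\mathfrak{N}(V), \bar{\mathbb{Q}}_{\ell})$ as a graded Frobenius module. I would first compute the \emph{ordinary} cohomology $H^*(\mathfrak{N}(V), \bar{\mathbb{Q}}_{\ell})$ by a contraction argument, and then pass to $H^*_c$ using the $\bar{\mathbb{Q}}_{\ell}$-rational smoothness established in \cref{prop: constant perverse sheaf on nilcone is constant sheaf}.

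For the first step, note that $\mathfrak{N}(V)$ is a cone: since $V^- \subset V$ is a linear subspace, the scaling action $t \cdot v = tv$ of $\mathbb{G}_m$ on $V$ preserves $\mathfrak{N}(V) = \im(\mu)$ (if $x = gv$ with $v \in V^-$, then $tx = g(tv)$ and $tv \in V^-$), it fixes $0 = \mu(1,0)$, and it is contracting. Since $\mathfrak{N}(V)$ is closed in $V$ and set-theoretically stable under scaling, the restriction of $\mathbb{A}^1 \times V \to V$, $(t,v) \mapsto tv$, is a morphism $a \colon \mathbb{A}^1 \times \mathfrak{N}(V) \to \mathfrak{N}(V)$ with $a|_{\{1\} \times \mathfrak{N}(V)} = \id$ and $a|_{\{0\} \times \mathfrak{N}(V)}$ the constant map to $0$. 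By $\mathbb{A}^1$-homotopy invariance of $\ell$-adic cohomology the two sections $\mathfrak{N}(V) \to \mathbb{A}^1 \times \mathfrak{N}(V)$ at $0$ and $1$ induce the same map on cohomology, so $\id^{*}$ and the pullback along the constant map agree on $H^*(\mathfrak{N}(V), \bar{\mathbb{Q}}_{\ell})$; hence $H^*(\mathfrak{N}(V), \bar{\mathbb{Q}}_{\ell}) = \bar{\mathbb{Q}}_{\ell}$ concentrated in degree $0$ with trivial Frobenius action, and the same holds over $\mathbb{F}_q$.

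For the second step, write $d := \dim \mathfrak{N}(V) = 48$ (\cref{lemma: dimension of nilcone}). By \cref{prop: constant perverse sheaf on nilcone is constant sheaf} we have $\omega_{\mathfrak{N}(V)_{\circ}} = \textbf{1}_{\mathfrak{N}(V)_{\circ}}[2d](d)$, and Poincaré--Verdier duality over $\mathbb{F}_q$ gives a Frobenius-equivariant isomorphism $H^i_c(\mathfrak{N}(V), \omega_{\mathfrak{N}(V)}) \cong H^{-i}(\mathfrak{N}(V), \bar{\mathbb{Q}}_{\ell})^{\vee}$. Rewriting the left-hand side as $H^{i+2d}_c(\mathfrak{N}(V), \bar{\mathbb{Q}}_{\ell})(d)$ and the right-hand side via the first step shows that $H^j_c(\mathfrak{N}(V), \bar{\mathbb{Q}}_{\ell})$ vanishes for $j \neq 2d$ and is isomorphic to $\bar{\mathbb{Q}}_{\ell}(-d)$ for $j = 2d$, with $F$ acting by multiplication by $q^d$. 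The trace formula then yields $\# \mathfrak{N}(V)^F = (-1)^{2d} q^d = q^{48}$.

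The argument is essentially formal once \cref{prop: constant perverse sheaf on nilcone is constant sheaf} is in hand. The two places where I would be careful are: checking that scaling really does restrict to a \emph{morphism} $\mathbb{A}^1 \times \mathfrak{N}(V) \to \mathfrak{N}(V)$ (which holds because $\mathfrak{N}(V)$ is a closed, scaling-stable subvariety of the affine space $V$), and keeping the Tate twists and sign conventions in Poincaré--Verdier duality straight so that the Frobenius eigenvalue on $H^{2d}_c$ comes out as $q^{d}$ rather than $q^{-d}$; this bookkeeping is really the only obstacle.
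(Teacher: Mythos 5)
Your proposal is correct and follows essentially the same route as the paper: both use the $\bar{\mathbb{Q}}_{\ell}$-rational smoothness from \cref{prop: constant perverse sheaf on nilcone is constant sheaf} together with duality to reduce $H^*_c$ to ordinary cohomology, compute the latter by contracting the cone to the origin, and conclude via the Lefschetz trace formula. The only difference is that you spell out the $\mathbb{A}^1$-homotopy argument for the contraction where the paper cites Springer's purity result, which is an equivalent step.
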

\begin{proof}
    Let $X_{\circ}$ be a variety defined over $\mathbb{F}_q$ with structure map $p: X_{\circ} \rightarrow Spec(\mathbb{F}_q)$. Recall that compactly supported cohomology of $X$ (as a Galois-module) is defined as $H^i_c(X) := H^i( p_! \textbf{1}_{X_{\circ}})$ and cohomology is defined as $H^i(X) := H^i(p_* \textbf{1}_{X_{\circ}})$. Thus, if $\omega_{X_{\circ}} = \textbf{1}_{X_{\circ}} [2\dim X] (\dim X)$, we get
    \begin{equation*}
        H_c^i(X)^{\vee} = H^{-i} (p_* \omega_{X_{\circ}}) = H^{-i}(p_*\textbf{1}_{X_{\circ}} [2 \dim X](\dim X)) = H^{2 \dim X - i}(X) (\dim X)
    \end{equation*}
    where the twist $(\dim X)$ on the right hand side means that the Frobenius action is twisted by $q^{- \dim X}$. Applying this for $X = \mathfrak{N}(V)$ (using \cref{prop: constant perverse sheaf on nilcone is constant sheaf}) and passing to the dual, we can identify
    \begin{equation*}
        H_c^i (\mathfrak{N}(V)) \cong H^{2 \dim \mathfrak{N}(V)-i}(\mathfrak{N}(V))^{\vee}(-\dim \mathfrak{N}(V)).
    \end{equation*}
    Since $\mathfrak{N}(V)$ is a $\mathbb{G}_m$-stable closed subvariety of $V$ that contains the origin, the inclusion $ \{ 0 \} \hookrightarrow \mathfrak{N}(V)$ induces an isomorphism $H^*(\mathfrak{N}(V))\cong H^* ( \{ 0\} ) $ by homotopy invariance (see for example \cite[Prop. 1]{springer1984purity}). Thus,
    \begin{equation*}
        H^i_c(\mathfrak{N}(V)) \cong \begin{cases}
            \bar{\mathbb{Q}}_{\ell} (- \dim \mathfrak{N}(V) ) & i = 2 \dim \mathfrak{N}(V), \\ 0 & otherwise.
        \end{cases}
    \end{equation*}
    By the Lefschetz trace formula, this implies
    \begin{equation*}
        \# \mathfrak{N}(V)^F  = \sum_{i \in \mathbb{Z}} (-1)^i Tr(F , H^i_c(\mathfrak{N}(V)) ) = q^{ \dim \mathfrak{N}(V)}\overset{\cref{lemma: dimension of nilcone}}{=} q^{48}.
    \end{equation*}
\end{proof}

\section{The special isogeny}\label{section: special isogeny}
The group $G= F_4(\bar{\mathbb{F}}_2)$ comes with a special isogeny \cite[Exp. 24, p.04]{chevalley1958classification}
\begin{equation*}
    \varphi: G \rightarrow G
\end{equation*}
which corresponds on the character lattice to the map $\varphi^{\#}: X^*(T) \rightarrow X^*(T)$ with
\begin{equation*}
    \varphi^{\#}(\alpha_1) = 2\alpha_4, \quad \varphi^{\#}(\alpha_2) = 2\alpha_3, \quad \varphi^{\#}(\alpha_3) = \alpha_2, \quad \varphi^{\#}(\alpha_4) = \alpha_1.
\end{equation*}
The morphism $\varphi$ is a bijection on $\bar{\mathbb{F}}_2$-points (but not an isomorphism of algebraic groups) and $\varphi \circ \varphi = Fr_2$ is the Frobenius map (with respect to split $\mathbb{F}_2$-structure on $G$).
Note that
\begin{equation*}
    \varphi^{\#} ( \Phi_s) = \Phi_l \quad \text{ and } \quad \varphi^{\#}(\Phi_l) = 2 \Phi_s = Fr_2^{\#}(\Phi_s).
\end{equation*}
Recall that $\mathfrak{g}_s$ and $\mathfrak{g}/\mathfrak{g}_s$ are the simple $G$-modules of lowest weight $\alpha_{1232}$ and $\alpha_{2342}$. We have
\begin{equation*}
    \varphi^{\#}(\alpha_{1232}) = \alpha_{2342}, \quad \varphi^{\#}(\alpha_{2342}) = \alpha_{2462} = Fr_2^{\#} (\alpha_{1232}).
\end{equation*}
Thus, we can pick isomorphisms of $G$-modules
\begin{equation*}
    \mathfrak{g}/\mathfrak{g}_s \overset{\beta_1}{\rightarrow} {}^{\varphi} \mathfrak{g}_s , \quad {}^{Fr_2} \mathfrak{g}_s \overset{\beta_2}{\rightarrow }{}^{\varphi}(\mathfrak{g}/\mathfrak{g}_s)
\end{equation*}
where for any $G$-variety $X$ and a morphism of algebraic groups $\psi : G \rightarrow G$, we denote by ${}^{\psi}X$ the $G$-variety obtained by inflating $X$ along $\psi$. Note that $\mathfrak{g}_s$ is defined over $\mathbb{F}_2$ (via the Chevalley basis), so the corresponding Frobenius map yields a $G$-equivariant bijective morphism of varieties (which is additive but not $\bar{\mathbb{F}}_2$-linear)
\begin{equation*}
    \mathfrak{g}_s \overset{Fr_2}{\longrightarrow} {}^{Fr_2} \mathfrak{g}_s.
\end{equation*}
Thus, we obtain a $G$-equivariant bijective morphism
\begin{align*}
    V = \mathfrak{g}_s \oplus \mathfrak{g}/\mathfrak{g}_s \overset{\begin{pmatrix} 0 & \beta_1 \\ \beta_2 \circ Fr_2 & 0 \end{pmatrix}}{\longrightarrow} {}^{\varphi} \mathfrak{g}_s \oplus {}^{\varphi}(\mathfrak{g}/\mathfrak{g}_s )  = {}^{\varphi}V
\end{align*}
which we denote by
\begin{equation*}
    \psi: V \rightarrow {}^{\varphi} V.
\end{equation*}
We will make use of the map $\psi$ in the following section to determine the structure of some stabilizers via the following lemma.
\begin{lemma}\label{lemma: stabilizer under isogeny iso}
    For any $v \in V$ there is a bijective isogeny of stabilizers
    \begin{equation*}
        G_v \rightarrow G_{\psi(v)}
    \end{equation*}
    where the stabilizers $G_v$ and $G_{\psi(v)}$ are taken with respect to the standard $G$-action on $V$ (i.e. we view $\psi(v)$ as an element of $V$ and not ${}^{\varphi}V$).
\end{lemma}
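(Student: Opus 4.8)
The plan is to unwind the definition of $\psi$ and exploit that it is built out of $G$-equivariant maps, twisted only by the isogeny $\varphi$ and the Frobenius $Fr_2$. Recall that $\psi : V \to {}^{\varphi}V$ is $G$-equivariant, meaning $\psi(gv) = \varphi(g) \cdot \psi(v)$ for all $g \in G$, $v \in V$, where on the right $\varphi(g)$ acts via the standard action on $V$ (this is what inflating along $\varphi$ means). Since $\psi$ is bijective as a morphism of varieties, for $g \in G_v$ we get $\varphi(g)\cdot \psi(v) = \psi(gv) = \psi(v)$, i.e. $\varphi(g) \in G_{\psi(v)}$; conversely if $\varphi(g) \in G_{\psi(v)}$ then $\psi(gv) = \psi(v)$ and injectivity of $\psi$ forces $gv = v$. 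Hence $\varphi$ restricts to a bijection $G_v \to G_{\psi(v)}$, because $\varphi$ is a bijection on $\bar{\mathbb{F}}_2$-points.

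Next I would check that this restriction is a morphism of algebraic groups and in fact an isogeny. That it is a homomorphism of abstract groups is immediate from $\varphi$ being a group homomorphism; that it is a morphism of varieties follows since $\varphi$ is a morphism and $G_v$, $G_{\psi(v)}$ are closed subgroups (so the restriction and corestriction are morphisms). To see it is an isogeny, note $\varphi \circ \varphi = Fr_2$, so on $G_v$ the composite $G_v \xrightarrow{\varphi} G_{\psi(v)} \xrightarrow{\varphi} G_{\psi(\psi(v))}$ — or more precisely the relevant twisted target — is the Frobenius of an $\mathbb{F}_2$-structure, which has finite kernel and is purely inseparable; consequently the map $G_v \to G_{\psi(v)}$ has finite (infinitesimal) kernel and is surjective with connected fibers onto its image, and since it is bijective on points it is surjective, so it is an isogeny. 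Equivalently, one observes that $\varphi$ itself is an isogeny of $G$, and restricting an isogeny to the preimage of a closed subgroup yields an isogeny onto that subgroup; here the preimage $\varphi^{-1}(G_{\psi(v)})$ equals $G_v$ by the bijection established above, so $G_v \to G_{\psi(v)}$ is an isogeny.

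The only genuinely delicate point is making sure the identification of the target is correct: $\psi(v)$ is being regarded as an honest element of $V$, not of ${}^{\varphi}V$, and the claim concerns the stabilizer $G_{\psi(v)}$ under the \emph{standard} action. But ${}^{\varphi}V$ and $V$ have the same underlying variety and the same underlying set of points; the action of $g$ on ${}^{\varphi}V$ is the standard action of $\varphi(g)$. So the stabilizer of $\psi(v) \in {}^{\varphi}V$ for the inflated action is $\{ g \mid \varphi(g) \in G_{\psi(v)}\} = \varphi^{-1}(G_{\psi(v)})$, and the $G$-equivariance of $\psi$ identifies this with $G_v$ — which is exactly the content above. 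I expect this bookkeeping about which action is in play to be the main obstacle to a clean write-up; everything else is formal once $\psi$ is known to be a $G$-equivariant bijection assembled from the data $(\beta_1, \beta_2, Fr_2)$ recorded just before the lemma.
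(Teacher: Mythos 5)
Your proposal is correct and is essentially the paper's own argument, just written out in more detail: the paper likewise observes that $G$-equivariance and bijectivity of $\psi$ identify $G_v$ with the stabilizer $\varphi^{-1}(G_{\psi(v)})$ of $\psi(v)$ for the inflated action on ${}^{\varphi}V$, and that the bijective isogeny $\varphi$ then restricts to a bijective isogeny onto $G_{\psi(v)}$. The ``bookkeeping'' point you flag as delicate is exactly the one sentence the paper devotes to the proof, so no further comparison is needed.
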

\begin{proof}
    Let $G_{\psi(v)}'$ be the stabilizer of $\psi(v)$ as an element of ${}^{\varphi} V$. Then the bijective isogeny $\varphi$ restricts to a bijective isogeny $G_{\psi(v)}' \rightarrow  G_{\psi(v)}$. Moreover, since $\psi$ is $G$-equivariant bijection, we have $G_v = G_{\psi(v)}'$.
\end{proof}
We can also compute the action of $\psi$ on weight spaces.
\begin{lemma}\label{lemma: psi on weight spaces}
We have
    \begin{equation*}
    V_{\alpha} =\begin{cases}
        \psi( V_{\varphi^{\#}(\alpha)}) & \alpha \in \Phi_s,\\
        \psi (V_{\tfrac{1}{2} \varphi^{\#}(\alpha) }) & \alpha \in \Phi_l.
        \end{cases}
    \end{equation*}
\end{lemma}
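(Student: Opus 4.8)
The plan is to trace through the definition of $\psi$ weight space by weight space, using the explicit block form
\[
\psi = \begin{pmatrix} 0 & \beta_1 \\ \beta_2 \circ Fr_2 & 0 \end{pmatrix}
\]
together with the fact that $\beta_1, \beta_2$ are $G$-equivariant (hence in particular $T$-equivariant up to the twist by $\varphi$) and the weight-space computation $\varphi^{\#}(\Phi_s) = \Phi_l$, $\varphi^{\#}(\Phi_l) = 2\Phi_s$. Concretely, I would first recall that inflating along $\varphi$ twists the $T$-action so that the weight-$\alpha$ space of ${}^{\varphi}X$ is the weight-$\varphi^{\#}(\alpha)$ space of $X$; thus a $G$-equivariant (in particular $T$-equivariant) isomorphism $\mathfrak{g}/\mathfrak{g}_s \xrightarrow{\beta_1} {}^{\varphi}\mathfrak{g}_s$ must carry $(\mathfrak{g}/\mathfrak{g}_s)_{\beta}$ isomorphically onto $({}^{\varphi}\mathfrak{g}_s)_{\beta} = (\mathfrak{g}_s)_{\varphi^{\#}(\beta)}$, and similarly $\beta_2$ carries $({}^{Fr_2}\mathfrak{g}_s)_{\beta} = (\mathfrak{g}_s)_{2\beta}$ onto $(\mathfrak{g}_s)_{\varphi^{\#}(\beta)/2}$ wait — rather $({}^{\varphi}(\mathfrak{g}/\mathfrak{g}_s))_{\beta} = (\mathfrak{g}/\mathfrak{g}_s)_{\varphi^{\#}(\beta)}$, so $\beta_2$ sends $(\mathfrak{g}_s)_{\beta/2}$ (the weight space of ${}^{Fr_2}\mathfrak{g}_s$ at weight $\beta$, since $Fr_2^{\#}$ doubles weights) to $(\mathfrak{g}/\mathfrak{g}_s)_{\varphi^{\#}(\beta)}$.

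Next I would split into the two cases. For $\alpha \in \Phi_s$: under the identification $V_{\alpha} \cong \mathfrak{g}_{\alpha}$, the space $V_{\alpha}$ sits inside the $\mathfrak{g}_s$-summand of $V$, so $\psi$ acts on it through the $\mathfrak{g}_s \to {}^{\varphi}(\mathfrak{g}/\mathfrak{g}_s)$ block $\beta_2 \circ Fr_2$. Since $Fr_2$ sends the weight-$\alpha$ space to the weight-$\alpha$ space of ${}^{Fr_2}\mathfrak{g}_s$ (which as a weight space of $\mathfrak{g}_s$ sits at weight $\alpha$ but with doubled $T$-action, i.e.\ weight $2\alpha$ in $\mathfrak{g}_s$-coordinates — here one must be careful, but the net effect after composing with $\beta_2$) one lands in $(\mathfrak{g}/\mathfrak{g}_s)_{\varphi^{\#}(\alpha)}$, i.e.\ in $V_{\varphi^{\#}(\alpha)}$ with $\varphi^{\#}(\alpha) \in \Phi_l$. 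Hence $\psi(V_{\alpha}) = V_{\varphi^{\#}(\alpha)}$ — but reading it backwards, $V_{\varphi^{\#}(\alpha)} = \psi(V_{\alpha})$, which is the claimed formula for long roots after reindexing. For $\alpha \in \Phi_l$: $V_{\alpha}$ sits in the $\mathfrak{g}/\mathfrak{g}_s$-summand, so $\psi$ acts through $\beta_1$, sending it to $({}^{\varphi}\mathfrak{g}_s)_{\alpha} = (\mathfrak{g}_s)_{\varphi^{\#}(\alpha)}$, but $\varphi^{\#}(\alpha) = 2\beta$ for a (unique) short root $\beta = \tfrac12 \varphi^{\#}(\alpha)$, and $(\mathfrak{g}_s)_{2\beta} = 0$ unless... — actually the point is that the weight of the image inside $V = {}^{\varphi}V$ as computed in the original $V$-coordinates is $\tfrac12\varphi^{\#}(\alpha) \in \Phi_s$, so $\psi(V_{\alpha})= V_{\tfrac12 \varphi^{\#}(\alpha)}$, giving the short-root case after reindexing.

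The bookkeeping obstacle — and the main thing to get right — is the interaction between the Frobenius twist $Fr_2$ (which is additive but not linear, and doubles $T$-weights) and the isogeny twist $\varphi$ (whose effect on weights is $\varphi^{\#}$, with $\varphi^{\#}\varphi^{\#} = Fr_2^{\#} = $ multiplication by $2$); one has to consistently decide whether a "weight-$\alpha$ vector of ${}^{\psi}X$" means a weight-$\alpha$ vector for the inflated action or a weight-$\psi^{\#}(\alpha)$ vector for the original action, and keep the two summands of $V$ straight. I would organize this by first stating the single clean fact: for any $G$-module $M$ defined via a Chevalley basis and any $\alpha$, the weight-$\alpha$ space of ${}^{\varphi}M$ equals the weight-$\varphi^{\#}(\alpha)$ space of $M$ (and analogously for $Fr_2$), then applying it mechanically to each of the two off-diagonal blocks of $\psi$. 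Once the indexing convention is fixed, the proof is a one-line application of $T$-equivariance of $\beta_1$, $\beta_2$ in each case; the content is entirely in the weight computation $\varphi^{\#}(\Phi_s) = \Phi_l$ and $\varphi^{\#}(\Phi_l) = 2\Phi_s$ already recorded above.
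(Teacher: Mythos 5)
Your overall strategy is exactly the paper's: split $\psi$ into its two off-diagonal blocks, use $G$- (hence $T$-) equivariance of $\beta_1$, $\beta_2$ and $Fr_2$ to track weight spaces, and feed in $\varphi^{\#}(\Phi_s)=\Phi_l$, $\varphi^{\#}(\Phi_l)=2\Phi_s$. Your final formulas $\psi(V_\alpha)=V_{\varphi^{\#}(\alpha)}$ for $\alpha\in\Phi_s$ and $\psi(V_\alpha)=V_{\frac12\varphi^{\#}(\alpha)}$ for $\alpha\in\Phi_l$ are correct and, after the reindexing you describe (using $\varphi^{\#}\circ\varphi^{\#}=2\cdot\mathrm{id}$), equivalent to the statement of the lemma.

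However, the ``single clean fact'' you propose to base the mechanical computation on is stated in the wrong direction. Since $\varphi^{\#}$ is pullback of characters along $\varphi$, a vector $v$ of weight $\mu$ in $M$ satisfies $\varphi(t)\cdot v=\mu(\varphi(t))v=\varphi^{\#}(\mu)(t)v$, so the correct identity is $M_{\mu}=({}^{\varphi}M)_{\varphi^{\#}(\mu)}$, i.e.\ $({}^{\varphi}M)_{\lambda}=M_{(\varphi^{\#})^{-1}(\lambda)}$ --- not $({}^{\varphi}M)_{\lambda}=M_{\varphi^{\#}(\lambda)}$ as you wrote. Applied literally, your version sends $({}^{\varphi}\mathfrak{g}_s)_{\beta}$ to $(\mathfrak{g}_s)_{\varphi^{\#}(\beta)}=(\mathfrak{g}_s)_{2\gamma}=0$ for $\beta\in\Phi_l$, which is exactly the dead end you run into mid-argument before asserting the right answer by fiat (``actually the point is that the weight \dots is $\tfrac12\varphi^{\#}(\alpha)$''). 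With the identity corrected, the patch becomes unnecessary and the computation closes exactly as in the paper: $\beta_1$ carries $V_{\varphi^{\#}(\alpha)}$ to $({}^{\varphi}V)_{\varphi^{\#}(\alpha)}=V_{\alpha}$ for $\alpha\in\Phi_s$, and $\beta_2\circ Fr_2$ carries $V_{\frac12\varphi^{\#}(\alpha)}$ through $({}^{Fr_2}V)_{\varphi^{\#}(\alpha)}$ to $({}^{\varphi}V)_{\varphi^{\#}(\alpha)}=V_{\alpha}$ for $\alpha\in\Phi_l$.
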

\begin{proof}
    Let $\alpha \in \Phi_s$. Then $\varphi^{\#}(\alpha) \in \Phi_l$. Note that $\beta_1$ is an isomorphism of $G$-modules, so we have
    \begin{equation*}
        \psi(V_{\varphi^{\#}(\alpha)}) = \beta_1(V_{\varphi^{\#}(\alpha)}) = ({}^{\varphi} V)_{\varphi^{\#}(\alpha)} = V_{\alpha}.
    \end{equation*}
    Similarly, if $\alpha \in \Phi_l$, then $\tfrac{1}{2} \varphi^{\#}(\alpha) \in \Phi_s$ and
    \begin{equation*}
        \psi( V_{\tfrac{1}{2} \varphi^{\#}(\alpha)})= \beta_2(Fr_2(V_{\tfrac{1}{2} \varphi^{\#}(\alpha)})) = \beta_2(({}^{Fr_2}V)_{\varphi^{\#}(\alpha)})=  ({}^{\varphi}V)_{\varphi^{\#}(\alpha)} =  V_{\alpha}.
    \end{equation*}
\end{proof}
Note that by \cref{lemma: psi on weight spaces} we have $\psi(V^-) = V^-$. Since $\mathfrak{N}(V) = G \cdot V^-$, we see that $\psi(\mathfrak{N}(V)) = \mathfrak{N}(V)$. Hence, $\psi$ restricts to a $G$-equivariant bijective morphism
\begin{equation*}
    \psi: \mathfrak{N}(V) \rightarrow {}^{\varphi} \mathfrak{N}(V).
\end{equation*}
\section{Exotic nilpotent orbits}
In this section we compute the $G$-orbits on $\mathfrak{N}(V)$. Our strategy is as follows: We first provide a list of orbits in \cref{table: nilpotent orbits} and compute their $\mathbb{F}_q$-points (where $q = 2^n$ for some $n \ge 1$). To show that we have found all the orbits, we compare the number of $\mathbb{F}_q$-points of the orbits with the number of $\mathbb{F}_q$-points in $\mathfrak{N}(V)$ computed in \cref{prop: number of F2n points of nilcone}. A similar strategy has been used to determine nilpotent (co-)adjoint orbits in bad characteristic for some exceptional groups \cite{spaltenstein1983unipotent,holt1985nilpotent, xue2014nilpotent}.
\begin{lemma}\label{lemma: Stabilizers in V and g are the same in some cases}
    Let $\gamma \in \Phi_l$ and $x \in \mathfrak{g}_s$. Then for any $\lambda \in \bar{\mathbb{F}}_2$, the elements $x + \lambda  X_{\gamma} \in \mathfrak{g}$ and $x + \lambda v_{\gamma} \in V$ have the same stabilizer in $G$.
\end{lemma}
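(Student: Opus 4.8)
The plan is to compare the action of $G$ on the two elements weight space by weight space. Write $\mathfrak{g} = \bigoplus_{\alpha \in \Phi} \mathfrak{g}_\alpha \oplus \mathfrak{h}$ and $V = \bigoplus_{\alpha \in \Phi} V_\alpha \oplus V_0$, and recall the canonical identifications $V_\alpha \cong \mathfrak{g}_\alpha$ ($v_\alpha \leftrightarrow X_\alpha$) and $V_0 = \mathfrak{h}_s \oplus \mathfrak{h}/\mathfrak{h}_s$. First I would reduce to showing that the two orbit maps $G \to \mathfrak{g}$, $g \mapsto g\cdot(x+\lambda X_\gamma)$ and $G \to V$, $g \mapsto g\cdot(x+\lambda v_\gamma)$ have the same fibers over $x+\lambda X_\gamma$ and $x+\lambda v_\gamma$ respectively; equivalently, that $g$ fixes $x+\lambda X_\gamma$ in $\mathfrak{g}$ if and only if it fixes $x+\lambda v_\gamma$ in $V$. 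Using the Bruhat decomposition $g = u_1 \dot w t u_2$ and the fact that both actions are $G$-equivariant, one can assume $g$ lies in a suitable unipotent piece times $N_G(T)$; but it is cleaner to argue directly with the explicit root-group formulas.

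The key point is that the relevant formulas for the $G$-action on $\mathfrak{g}$ and on $V$ agree in the situation at hand. Concretely: for a root group element $x_\alpha(t)$ the action on a long root vector $X_\gamma$ (or $v_\gamma$) and on $x \in \mathfrak{g}_s$ involves only (i) root strings $\beta, \beta+k\alpha, \dots$ and (ii) the Cartan-type term $x_\alpha(t)\cdot h = h + t\,\alpha(h) X_\alpha$. For $V$ these are governed by \cref{lemma: action of U on V} and \eqref{eq: action of U alpha on V0}, which say that the only difference from $\mathfrak{g}$ is that certain terms get suppressed (those producing a weight vector of the "wrong length") or replaced by $t^2$-terms. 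I would go through the cases: the $\mathfrak{g}_s$-part $x$ lies in a span of short root vectors and $H_{\alpha_3}, H_{\alpha_4}$, so $x_\alpha(t)$ moves it within $\mathfrak{g}_s$ exactly as in $V$ (short-to-short root string steps, and Cartan terms landing in short root spaces, all survive in $V$); the long root vector $X_\gamma$ can only be moved by $x_\alpha(t)$ to another long root vector $X_{\gamma'}$ (via a length-preserving root string step, since $\gamma' = \gamma + k\alpha$ of the same length) — and such a step is recorded identically in \cref{lemma: action of U on V} — or to a short root vector via the middle-of-a-string phenomenon, but then $\gamma'$ is short, has a coefficient ${p+k\choose k}$ which in characteristic $2$ and for a length-$3$ string equals... here one checks the term is killed in $V$ precisely when it would produce a length mismatch. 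The upshot is that in each weight space the two actions produce the same component of the result, so $g\cdot(x+\lambda X_\gamma) = x+\lambda X_\gamma$ in every weight space of $\mathfrak{g}$ iff $g\cdot(x+\lambda v_\gamma) = x+\lambda v_\gamma$ in every weight space of $V$.

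To organize this without a long case analysis, I would instead use the special isogeny $\psi$ from \cref{section: special isogeny} if it helps, but more likely the cleanest route is: note that both $\mathfrak{g}$ and $V$ contain $\mathfrak{g}_s$ as a $G$-submodule (inside $\mathfrak{g}$ it is a submodule, inside $V$ it is a direct summand), and the quotients $\mathfrak{g}/\mathfrak{g}_s$ and $\mathfrak{g}/\mathfrak{g}_s$ (resp. $V/\mathfrak{g}_s = \mathfrak{g}/\mathfrak{g}_s$) are isomorphic as $G$-modules. Under the quotient map, $x + \lambda X_\gamma \mapsto \lambda \bar X_\gamma$ and $x + \lambda v_\gamma \mapsto \lambda \bar v_\gamma$ correspond to the same element. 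Hence $G_{x+\lambda X_\gamma}$ and $G_{x+\lambda v_\gamma}$ both sit inside the common stabilizer $H := G_{\lambda \bar X_\gamma} \subset G$ of that element of $\mathfrak{g}/\mathfrak{g}_s$, and the question reduces to whether, for $h \in H$, the "correction" $h\cdot(x+\lambda X_\gamma) - (x+\lambda X_\gamma) \in \mathfrak{g}_s$ equals the correction $h\cdot(x+\lambda v_\gamma) - (x+\lambda v_\gamma) \in \mathfrak{g}_s$. Since $\mathfrak{g}_s$ appears the same way in both modules (same $G$-action on $\mathfrak{g}_s$, same identification) and the only subtlety — the extension class of $0 \to \mathfrak{g}_s \to \mathfrak{g} \to \mathfrak{g}/\mathfrak{g}_s \to 0$ versus the split sequence for $V$ — is exactly a $\mathfrak{g}_s$-valued cocycle, I would verify that for $h \in H$ the two corrections agree using \cref{lemma: action of U on V}, \eqref{eq: action of U on h}, and the Chevalley formulas, reduced to root-group generators and $N_G(T)$.

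The main obstacle is pinning down the potential discrepancy coming from the middle term of a length-$3$ root string, i.e. an $x_\alpha(t)$ with a long root $\gamma$ such that $\gamma + \alpha$ is short: in $\mathfrak{g}$ this contributes $t\, X_{\gamma+\alpha}$ to the result, and $X_{\gamma+\alpha} \in \mathfrak{g}_s$, while in $V$ (by \cref{lemma: action of U on V}, since $\gamma$ and $\gamma+\alpha$ have different lengths) this term is suppressed. I would need to check that for $h = x_\alpha(t) \in H$ (i.e. $h$ fixing $\lambda \bar X_\gamma$ in $\mathfrak{g}/\mathfrak{g}_s$) no such "short" term is actually produced when acting on $x + \lambda X_\gamma$ — for instance because $x_\alpha(t) \in H$ forces $t = 0$ on the relevant string, or because the short weight $\gamma + \alpha$ already receives a matching contribution from $x \in \mathfrak{g}_s$ that makes the two answers agree regardless. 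Working this out is the crux; everything else is bookkeeping with the explicit formulas already established in \cref{section: the rep V}.
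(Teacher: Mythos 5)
Your structural reduction is the right one and matches the paper's: project along $\mathfrak{g}\rightarrow\mathfrak{g}/\mathfrak{g}_s$, observe that both elements map to $\lambda\bar{X}_{\gamma}$, so both stabilizers lie in $H:=G_{\lambda\bar{X}_{\gamma}}=G_{v_{\gamma}}$ (using that $\mathfrak{g}/\mathfrak{g}_s$ is a direct summand of $V$, so that $G_{x+\lambda v_{\gamma}}=G_x\cap G_{v_{\gamma}}$ exactly). But the step you flag as ``the crux'' is a genuine gap, and neither of your two suggested resolutions closes it. For $\lambda\neq 0$ and $h\in H$, the difference between the two ``corrections'' is exactly $\lambda\,(hX_{\gamma}-X_{\gamma})\in\mathfrak{g}_s$, so what you must prove is precisely that every $h$ stabilizing $\bar{X}_{\gamma}$ in the quotient already stabilizes the lift $X_{\gamma}$ in $\mathfrak{g}$, i.e.\ $G_{\bar{X}_{\gamma}}=G_{X_{\gamma}}$ (equivalently $G_{v_{\gamma}}=G_{X_{\gamma}}$). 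Your second suggested escape --- that a ``matching contribution from $x$'' might make the answers agree anyway --- cannot work, since the discrepancy $\lambda(hX_{\gamma}-X_{\gamma})$ is independent of $x$. Your first suggestion (checking root-group generators one at a time) points in the right direction but is not carried out, and a general $h\in H$ is not a single root-group element, so a generator-by-generator check does not by itself determine $H\cap G_{x+\lambda X_\gamma}$.

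The paper supplies exactly this missing identity: by $G$-equivariance one reduces to $\gamma=\alpha_{2342}$ the lowest (long) root, and a direct Bruhat-decomposition computation using \cref{lemma: action of U on V} shows $G_{X_{\gamma}}=G_{v_{\gamma}}$. With that in hand the argument collapses to a two-line chain of equalities,
\begin{equation*}
G_{x+\lambda X_{\gamma}}=G_{x+\lambda X_{\gamma}}\cap G_{X_{\gamma}}=G_x\cap G_{X_{\gamma}}=G_x\cap G_{v_{\gamma}}=G_{x+\lambda v_{\gamma}},
\end{equation*}
and no analysis of root strings or of the extension class of $0\rightarrow\mathfrak{g}_s\rightarrow\mathfrak{g}\rightarrow\mathfrak{g}/\mathfrak{g}_s\rightarrow 0$ is needed. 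So: correct framework, but the one nontrivial input is asserted rather than proved, and you have not identified the clean form of the statement ($G_{X_\gamma}=G_{v_\gamma}$ for a single highest/lowest long root vector) that makes it checkable.
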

\begin{proof}
    The proof is identical to the proof of the analogous result in the $G_2$ case in \cite[Lemma 5.2]{antor2025geometric}. Here is a sketch of the argument: The $\lambda = 0$ case is trivial, so let's assume $\lambda \neq 0$. By $G$-equivariance, we may further assume that $\gamma = \alpha_{2342}$ is the lowest root. A straightforward computation with the Bruhat decomposition (using \cref{lemma: action of U on V}) shows that $G_{X_{\gamma}} = G_{v_{\gamma}} $. Since the $G$-equivariant map $\mathfrak{g} \rightarrow \mathfrak{g}/\mathfrak{g}_s$ sends $x+ \lambda X_{\gamma}$ to $\lambda v_{\gamma}$, we get $G_{x + \lambda X_\gamma} \subset G_{\lambda v_{\gamma}} =G_{ v_{\gamma}} = G_{X_{\gamma}}$ and thus
    \begin{equation*}
        G_{x + \lambda X_\gamma} = G_{x + \lambda X_\gamma} \cap G_{X_{\gamma}} = G_x \cap G_{X_{\gamma}} = G_x \cap G_{v_{\gamma}} = G_{x + \lambda v_{\gamma}}.
    \end{equation*}
\end{proof}
We will make use of the following two elements in $G$
\begin{equation}\label{eq: def of u and s}
    \begin{aligned}
    u &:= x_{\alpha_1}(1) x_{\alpha_4}(1)\\
    s &:= \dot{s}_1 \dot{s}_4
    \end{aligned}
\end{equation}
where we pick
\begin{equation*}
    \dot{s}_i := \varphi_{\alpha_i}\left( \begin{pmatrix}
    0 & 1 \\ 1  & 0
\end{pmatrix} \right)
\end{equation*}
with $\varphi_{\alpha_i} : SL_2 \rightarrow G$ as in \eqref{eq: root group hom}. Note that $u^2 = s^2 = 1$ (using that we are in characteristic $2$). For any $x \in V$ let $A(x) := G_x / G_x^{\circ}$ be the corresponding component group.
\begin{lemma}\label{lemma: stabilizers are as in table}
    The elements $\xi_i$ in \cref{table: nilpotent orbits} have stabilizer dimension, component group and reductive component as listed in the same table. We have $u,s \in G_{\xi_{17}}$ and $A(\xi_{17}) \cong S_3$ is generated by $\bar{u}$ and $\bar{s}$.
\end{lemma}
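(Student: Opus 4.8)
The plan is to verify \cref{table: nilpotent orbits} entry by entry over the $24$ representatives $\xi_1,\dots,\xi_{24}$, each chosen inside $V^-$ (or $V^{\le 0}$), establishing in turn the stabilizer dimension, the reductive part of $G_{\xi_i}^\circ$, and the component group $A(\xi_i)$. Two reductions cut down the work. First, by \cref{lemma: stabilizer under isogeny iso} the special isogeny $\psi$ induces a bijective isogeny $G_{\xi_i}\to G_{\psi(\xi_i)}$, and by \cref{lemma: psi on weight spaces} it interchanges short- and long-root weight spaces; so $\psi$ permutes the representatives, and since a bijective isogeny preserves dimension, component group and the isomorphism type of the reductive part, it suffices to treat one element from each $\psi$-pair. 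Second, whenever $\xi_i$ has the shape $x+\lambda v_\gamma$ with $x\in\mathfrak g_s$, $\gamma\in\Phi_l$ as in \cref{lemma: Stabilizers in V and g are the same in some cases}, its stabilizer equals the centralizer of the nilpotent $x+\lambda X_\gamma\in\mathfrak g$, whose invariants are read off from the known classification of nilpotent $F_4$-orbits in characteristic $2$.

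For dimensions and identity components, fix $\xi_i\in V^-$ with support $S\subset\Phi^-$. A lower bound $\dim G_{\xi_i}\ge d_i$ is obtained from an explicit connected subgroup: the subtorus $T_i:=\bigcap_{\alpha\in S}\ker\alpha$ stabilizes $\xi_i$, and by \cref{lemma: action of U on V} (read off via \cref{remark: computation via graph}) so does $U_\beta$ for each $\beta\in\Phi^-$ for which $\beta+\alpha$ and $2\beta+\alpha$ either fail to be roots of the same length as $\alpha$ or again lie in $S$ with matching coefficients, as do certain root $SL_2$'s $\varphi_\beta(SL_2)$ acting trivially on $\xi_i$; these generate $H_i\le G_{\xi_i}^\circ$ of the claimed dimension, whose reductive part gives the reductive component. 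An upper bound comes from computing $\mathrm{Lie}(G_{\xi_i})=\{X\in\mathfrak g:X\cdot\xi_i=0\}$ by the Chevalley formulas; note that in characteristic $2$ this only bounds $\dim G_{\xi_i}$ from above, since the orbit map may be inseparable, so the matching lower bound from $H_i$ is genuinely needed. When the two bounds meet we are done, and the remaining cases are dealt with via $\psi$ or \cref{lemma: Stabilizers in V and g are the same in some cases}. All of this can be double-checked with a computer algebra system.

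For the component groups one first needs an a priori bound on $|A(\xi_i)|$: \cref{lemma: Stabilizers in V and g are the same in some cases} and $\psi$ supply it for the orbits they reach, and for the rest the global count $\#\mathfrak N(V)^F=q^{48}$ of \cref{prop: number of F2n points of nilcone} strongly constrains the possibilities — expanding $\#(G/G_{\xi_i})(\mathbb F_q)$ in terms of $\dim G_{\xi_i}$ and the $F$-conjugacy classes of $A(\xi_i)$ and matching against $q^{48}$ pins down the orders. One then exhibits explicit complements to $G_{\xi_i}^\circ$, typically elements of $N_G(T)$ and unipotent elements, each checked to stabilize $\xi_i$ via \cref{lemma: action of U on V} and \eqref{eq: action of U alpha on V0}, and computes the finite group they generate. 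For $\xi_{17}$: \cref{remark: computation via graph} and \eqref{eq: action of U alpha on V0} give $u\cdot\xi_{17}=\xi_{17}$ and $s\cdot\xi_{17}=\xi_{17}$, so $u,s\in G_{\xi_{17}}$. Since $\alpha_1$ and $\alpha_4$ are non-adjacent in the Dynkin diagram, $\varphi_{\alpha_1}(SL_2)$ and $\varphi_{\alpha_4}(SL_2)$ commute, and $u,s$ are the images of $\left(\begin{smallmatrix}1&1\\0&1\end{smallmatrix}\right)$ and $\left(\begin{smallmatrix}0&1\\1&0\end{smallmatrix}\right)$ under the diagonal $SL_2\hookrightarrow SL_2\times SL_2\xrightarrow{\varphi_{\alpha_1}\times\varphi_{\alpha_4}}G$; since $\left(\begin{smallmatrix}1&1\\0&1\end{smallmatrix}\right)\left(\begin{smallmatrix}0&1\\1&0\end{smallmatrix}\right)=\left(\begin{smallmatrix}1&1\\1&0\end{smallmatrix}\right)$ has order $3$ in $SL_2(\bar{\mathbb F}_2)$, we get $u^2=s^2=1$, $(us)^3=1$, and $\langle u,s\rangle\cong SL_2(\mathbb F_2)\cong S_3$ in $G$. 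Having shown $|A(\xi_{17})|=6$ and that $G_{\xi_{17}}$ is generated by $G_{\xi_{17}}^\circ$ together with $u,s$ (equivalently $us\notin G_{\xi_{17}}^\circ$, so that $\langle u,s\rangle\cap G_{\xi_{17}}^\circ$, being normal in $S_3$, is trivial), the map $\langle u,s\rangle\to A(\xi_{17})$ is an injection of groups of order $6$, hence an isomorphism, and $A(\xi_{17})\cong S_3$ is generated by $\bar u,\bar s$.

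The main obstacle is the third step, the component groups: outside the range of \cref{lemma: Stabilizers in V and g are the same in some cases} and the isogeny $\psi$ there is no shortcut, and one must both secure the a priori bound on $|A(\xi_i)|$ from the point count and verify that the explicitly produced elements generate all of $A(\xi_i)$ rather than a proper subgroup. This bookkeeping, together with the inseparability subtlety in the dimension count, is where the effort will concentrate.
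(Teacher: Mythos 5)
Your two reductions (the special isogeny via \cref{lemma: stabilizer under isogeny iso} and the transfer to the classical nilpotent cone via \cref{lemma: Stabilizers in V and g are the same in some cases}) are exactly the paper's, and your identification of $\langle u,s\rangle$ with the image of $SL_2(\mathbb{F}_2)$ under the diagonal embedding into the commuting product $\varphi_{\alpha_1}(SL_2)\cdot\varphi_{\alpha_4}(SL_2)$ is a clean way to see the $S_3$ relations. But your method for the component groups in the remaining cases ($\xi_{13},\xi_{16},\xi_{17},\xi_{20},\dots,\xi_{24}$) has a genuine gap. All of these have unipotent connected stabilizer, and for such orbits the point count is \emph{blind} to the component group: by \cref{lemma: number of points in G/H with H0 unipotent}, $\#(G/H)^F=\#G^F/q^{\dim H}$ independently of $H/H^{\circ}$ (the $F$-conjugacy classes of $H/H^\circ$ cancel out in the sum over $G^F$-orbits). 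Compare $\xi_{13}$ and $\xi_{17}$ in \cref{table: Fq points of orbits orbits}: their counts differ only by the power of $q$ coming from the stabilizer dimension, even though one has trivial component group and the other has $A(\xi_{17})\cong S_3$. So matching against $q^{48}$ cannot ``pin down the orders''; there is also a circularity lurking, since using $\sum_i\#\mathcal{O}_i^F\le q^{48}$ presupposes the orbits are pairwise distinct, which is only established in \cref{thm: finitely many orbits} using the stabilizer data of this very lemma. Consequently your claim ``Having shown $|A(\xi_{17})|=6$'' is unsupported: you have an injection $S_3\hookrightarrow A(\xi_{17})$ (modulo checking $\langle u,s\rangle\cap G_{\xi_{17}}^\circ=1$) but no upper bound.

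The paper closes this by brute force on the Bruhat decomposition: for each of these $\xi_i$ one checks that $G_{\xi_i}\cap B\dot{w}B=\emptyset$ for every $w\neq e$ (respectively for every $w\notin\{e,s\}$ when $i=17$), so that $G_{\xi_i}=B_{\xi_i}$ exactly, and $B_{\xi_i}$ — hence both its dimension and its group of components — is computed directly from \cref{lemma: action of U on V}. For $\xi_{17}$ one finds $B_{\xi_{17}}/B_{\xi_{17}}^{\circ}=\langle\bar u\rangle$ and $G_{\xi_{17}}\cap B\dot sB=\{s,us\}\cdot B_{\xi_{17}}$, which gives $|A(\xi_{17})|=6$ with two involutions and hence $A(\xi_{17})\cong S_3$. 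Your dimension argument (explicit connected subgroup for the lower bound, infinitesimal stabilizer for the upper bound, with the inseparability caveat correctly flagged) is a legitimate alternative for $\dim G_{\xi_i}$ and the reductive part, but it does not substitute for the Bruhat-cell computation where the component group is concerned: some exhaustive determination of $G_{\xi_i}$ beyond its identity component is unavoidable, and your proposal does not supply one.
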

\begin{proof}
    In principle, all the stabilizers can be computed explicitly by hand similar to the approach in \cite{spaltenstein1984nilpotent}. However, in many cases we can simplify the argument using \cref{lemma: Stabilizers in V and g are the same in some cases} and \cref{lemma: stabilizer under isogeny iso}: For $\xi_1, \xi_2, \xi_3, \xi_4, \xi_5, \xi_8, \xi_{10}, \xi_{18}$ the structure of the stabilizer can be read off directly from \cite[Table 1]{spaltenstein1984nilpotent} using \cref{lemma: Stabilizers in V and g are the same in some cases}. Moreover, it easy to check (e.g. using \cref{remark: computation via graph}) that
    \begin{align*}
        \xi_7 & \sim v_{1231}+v_{1222} \\
        \xi_{11} & \sim v_{0121} + v_{1110} + v_{1222} \\
        \xi_{14}& \sim v_{1110}+v_{0121} + v_{0122}
    \end{align*}
    via an element of $N_G(T)$. The stabilizer structure of these elements can again be read off directly from \cite[Table 1]{spaltenstein1984nilpotent} using \cref{lemma: Stabilizers in V and g are the same in some cases}. Recall from \cref{section: special isogeny} that we have a $G$-equivariant bijective morphism $\psi: \mathfrak{N}(V) \rightarrow {}^{\varphi} \mathfrak{N}(V)$ where $\varphi: G \rightarrow G$ is the special isogeny. Using \cref{lemma: psi on weight spaces}, it is easy to see that
    \begin{equation*}
        \psi(\xi_5) \sim \xi_6, \quad \psi(\xi_8) \sim \xi_9, \quad \psi(\xi_{11}) \sim \xi_{12}, \quad \psi(\xi_{14}) \sim \xi_{15}, \quad \psi(\xi_{18}) \sim \xi_{19}
    \end{equation*}
    via an element in $T$. In each of these cases the structure of the stabilizer for $\psi(\xi)$ can be read off from the structure of the stabilizer of $\xi$ by \cref{lemma: stabilizer under isogeny iso} (note that an isogeny between simple algebraic groups can only exist if the groups have the same type except for type $B_n$ and $C_n$ in characteristic $2$, but $B_n/C_n$ does not occur in the cases at hand).
    
    It remains to compute the stabilizer structure for $\xi_{13}, \xi_{16}, \xi_{17}, \xi_{20}, \xi_{21}, \xi_{22}, \xi_{23}, \xi_{24}$. One can check (either by hand or with the help of a computer) that  except for $\xi_i = \xi_{17}$, each of these $\xi_i$ satisfy $G_{\xi_i} \cap B\dot{w} B = \emptyset$ for $w \neq e$ and thus $G_{\xi_i} = B_{\xi_i}$. The group $B_{\xi_i}$ can be easily computed using \cref{lemma: action of U on V} (by hand or with the help of a computer).
    
    Thus, it only remains to determine the structure of $ G_{\xi_{17}}$. In this case, one can check that $G_{\xi_{17}} \cap B \dot{w} B = \emptyset$ unless $w \in \{e, s \}$. $B_{\xi_{17}}$ can be again computed using \cref{lemma: action of U on V} and one finds that
    \begin{align*}
        \dim B_{\xi_{17}} &= 12\\
        B_{\xi_{17}}/B_{\xi_{17}}^{\circ} &= \langle \bar{u} \rangle \cong \mathbb{Z}/2.
    \end{align*}
    Moreover, a direct computation with \cref{lemma: action of U on V} shows that
    \begin{equation*}
        G_{\xi_{17}} \cap B s B = \{ s, us \} \cdot B_{\xi_{17}}.
    \end{equation*}
    Thus, $G_{\xi_{17}}^{\circ} = B_{\xi_{17}}^{\circ}$ which shows that $\dim G_{\xi_{17}} = 12$. Moreover,
    \begin{equation*}
        A(\xi_{17})= G_{\xi_{17}}/G_{\xi_{17}}^{\circ} = B_{\xi_{17}}/B_{\xi_{17}}^{\circ} \sqcup  (B s B \cap G_{\xi_{17}})/B_{\xi_{17}}^{\circ} = \{e, \bar{u}, \bar{s}, \bar{u} \bar{s} , \bar{s}\bar{u}, \bar{u} \bar{s} \bar{u} \}
    \end{equation*}
    is a group of order $6$ with at least two elements of order two (namely $\bar{u}$ and $\bar{s})$. Thus, $A(\xi_{17})$ has to be isomorphic to $S_3$. Finally, looking at weight spaces, one can see that $T_{\xi_{17}} = \{e\}$ which shows that the reductive part of $G_{\xi_{17}}^{\circ} = B_{\xi_{17}}^{\circ}$ is trivial.
\end{proof}
\begin{table}[ht]
\caption{Orbit representatives in $\mathfrak{N}(V)$ and stabilizer structure}
\label{table: nilpotent orbits}
\centering
\begin{tabular}{l|c|c|c}
Representative $\xi$ & $\dim G_{\xi}$ & $ A(\xi)$  & type of $G_{\xi}^{\circ}/R_u(G_{\xi}^{\circ})$\\ \hline
$\xi_1 = 0$ & $52$ & $1$ & $F_4$ \\
$\xi_2 = v_{2342}$ & $36$ &  $1$ & $C_3$ \\
$\xi_3 = v_{1232}$ & $36$ &  $1$ & $B_3$ \\
$\xi_4 = v_{1232} + v_{2342}$ & $30$ &  $1$ & $B_2$ \\
$\xi_5 = v_{0121} + v_{1111}$ & $28$ &  $1$ & $G_2$ \\
$\xi_6 = v_{1220} + v_{1122}$ & $28$ &  $1$ & $G_2$ \\
$\xi_7 = v_{1221} + v_{1242}$ & $24$ &  $1$ & $A_1 \times A_1$ \\
$\xi_8 = v_{0121} + v_{1111}+v_{2342}$ & $22$ &  $1$ & $A_1$ \\
$\xi_9 = v_{1232} + v_{1220}+v_{1122}$ & $22$ &  $1$ & $A_1$ \\
$\xi_{10} = v_{1110} + v_{0122}$ & $20$ &  $1$ & $B_2$ \\
$\xi_{11} = v_{0121} + v_{1111} + v_{1220}$ & $18$ &  $1$ & $A_1$ \\
$\xi_{12} = v_{0121} + v_{1220} + v_{1122}$ & $18$ &  $1$ & $A_1$ \\
$\xi_{13} = v_{1111} + v_{0121}+v_{1220} + v_{1122}$ & $16$ &  $1$ & $\emptyset$ \\
$\xi_{14} = v_{1110} + v_{0111}+v_{0122}$ & $16$ &  $1$ & $A_1$ \\
$\xi_{15} = v_{1110} + v_{1120}+v_{0122}$ & $16$ &  $1$ & $A_1$ \\
$\xi_{16} = v_{1110} + v_{0121}+v_{1220}+v_{0122}$ & $14$ &  $1$ & $\emptyset$ \\
$\xi_{17} = v_{1110} + v_{0111}+v_{1120}+v_{0122}$ & $12$ &  $S_3$ & $\emptyset$ \\
$\xi_{18} = v_{0010} + v_{0001}+v_{1220}$ & $12$ &  $1$ & $A_1$ \\
$\xi_{19} = v_{0121} + v_{1000} + v_{0100}$ & $12$ &  $1$ & $A_1$ \\
$\xi_{20} = v_{0010} + v_{0001} + v_{1220} +v_{1122}$& $10$ &  $1$ & $\emptyset$ \\
$\xi_{21} =  v_{1111} +v_{0121} + v_{1000} + v_{0100}$& $10$ &  $1$ & $\emptyset$ \\
$\xi_{22} = v_{0110} +v_{0011} + v_{1100} + v_{0120} $ & $8$ &  $1$ & $\emptyset$ \\
$\xi_{23} = v_{0110} +v_{0001} + v_{1000} + v_{0120} $& $6$ &  $1$ & $\emptyset$ \\
$\xi_{24} = v_{0010} +v_{0001}+ v_{1000} + v_{0100} $& $4$ &  $1$ & $\emptyset$ \\
\end{tabular}
\end{table}
Recall that for any variety $X$ defined over $\mathbb{F}_q$, we denote by $F = Fr_q: X \rightarrow X$ the geometric Frobenius. If $X$ is a $G$-variety and $x \in X^F$ such that the stabilizer $G_x$ is connected, then the number of $\mathbb{F}_q$-points in the corresponding $G$-orbit $G \cdot x$ can easily be computed using the following well-known lemma (see for example \cite[Lemma 4.2.13]{digne2020representations}).
\begin{lemma}\label{lemma: number of points in G/H with H connected}
    Let $H \subset G$ be a connected subgroup (defined over $\mathbb{F}_q$). Then $\# (G/H)^F =\frac{\# G^F}{\# H^F}$.
\end{lemma}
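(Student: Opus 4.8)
The plan is to deduce this from Lang's theorem, exactly as in \cite[Lemma 4.2.13]{digne2020representations}. Recall first that $G/H$ is a quasi-projective variety carrying an $\mathbb{F}_q$-structure induced from those on $G$ and on the closed subgroup $H$, that the geometric Frobenius $F = Fr_q$ on $G/H$ is induced by $F$ on $G$, and hence that the projection $\pi \colon G \to G/H$ is $F$-equivariant. The only genuine input needed is Lang's theorem: for a connected linear algebraic group $\mathcal{G}$ over $\bar{\mathbb{F}}_2$ equipped with a Frobenius endomorphism $F$, the Lang map $g \mapsto g^{-1} F(g)$ is surjective on $\bar{\mathbb{F}}_2$-points. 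This is the single place where the connectedness hypothesis on $H$ will be used.

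First I would show that $\pi$ restricts to a \emph{surjection} $G^F \to (G/H)^F$. If $gH \in (G/H)^F$ then $F(g)H = gH$, so $g^{-1}F(g) \in H$; applying Lang's theorem to the connected group $H$ produces $h \in H$ with $h^{-1}F(h) = g^{-1}F(g)$, whence $F(gh^{-1}) = F(g)F(h)^{-1} = gh^{-1}$. Thus $gh^{-1} \in G^F$ and $\pi(gh^{-1}) = gH$.

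Next I would determine the fibers of this surjection. Fix $g \in G^F$. An element of $\pi^{-1}(gH) \cap G^F$ has the form $gh$ with $h \in H$, and since $F(gh) = gF(h)$ it lies in $G^F$ precisely when $h \in H^F$; conversely all such $gh$ do. Hence $\pi^{-1}(gH) \cap G^F = gH^F$, which has exactly $\#H^F$ elements, independently of the chosen coset. Summing over the finitely many $F$-fixed cosets gives $\#G^F = \#(G/H)^F \cdot \#H^F$, which rearranges to the claimed identity.

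There is no real obstacle here — this is a textbook application of Lang's theorem — and the only points meriting a line of justification are that $G/H$ carries a compatible $\mathbb{F}_q$-structure making $\pi$ equivariant (standard Galois descent, using that $H$ is a closed subgroup defined over $\mathbb{F}_q$) and the surjectivity of the Lang map on $H$, which is exactly where the hypothesis that $H$ is connected is invoked.
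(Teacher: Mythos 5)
Your proof is correct and is exactly the standard Lang's theorem argument that the paper invokes by citing \cite[Lemma 4.2.13]{digne2020representations} without reproducing it: surjectivity of $G^F \to (G/H)^F$ via Lang's theorem applied to the connected group $H$, plus the observation that each fiber over an $F$-fixed coset is a coset of $H^F$. Nothing further is needed.
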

For disconnected stabilizers, counting the $\mathbb{F}_q$-points of an orbit is more complicated. However, if the connected stabilizer is unipotent, we still have the following result.
\begin{lemma}\label{lemma: number of points in G/H with H0 unipotent}
    Let $H \subset G$ be a subgroup (defined over $\mathbb{F}_q$) such that $H^{\circ}$ is unipotent. Then $\# (G/H)^{F} = \frac{\# G^F}{q^{\dim H}}$.
\end{lemma}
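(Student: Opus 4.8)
The plan is to reduce the count to a Lang–Steinberg argument over the group of $\mathbb{F}_q$-points, and to use the unipotence of $H^{\circ}$ to control the contribution of the component group. First I would recall that $G$ is connected (and defined over $\mathbb{F}_q$), so by the Lang–Steinberg theorem the map $G \to G/H$ is surjective on $\mathbb{F}_q$-points and, more precisely, the $F$-stable $G$-orbit $G/H$ decomposes into $G^F$-orbits indexed by the $F$-conjugacy classes of $\pi_0(H) = H/H^{\circ}$; the $G^F$-orbit attached to a class $[c]$ consists of $\# G^F / \# (H_c)^F$ points, where $H_c$ is a form of $H$ obtained by twisting $F$ by $c$. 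Summing over classes gives
\begin{equation*}
    \#(G/H)^F = \sum_{[c]} \frac{\# G^F}{\#(H_c)^F}.
\end{equation*}
So the whole problem is to show $\#(H_c)^F = q^{\dim H}$ for every twist, i.e. independently of $c$.

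The key point is that $H^{\circ}$ is unipotent, hence (being a connected unipotent group in characteristic $2$) it has a filtration by $F$-stable (or at least $cF$-stable, after possibly refining) subgroups with successive quotients isomorphic to $\mathbb{G}_a$; therefore $\#(H^{\circ}_c)^{F} = q^{\dim H^{\circ}}$ for any twisted Frobenius $cF$, since each $\mathbb{G}_a$ contributes exactly $q$ and the extensions are trivial on $\mathbb{F}_q$-points by the additive form of Lang–Steinberg. Next I would handle the finite group $\pi_0(H)$: since $H^{\circ}$ is unipotent and hence has no nontrivial characters and, more importantly, the quotient $H/H^{\circ}$ is finite, the twisted form $H_c$ still sits in an exact sequence $1 \to H^{\circ} \to H_c \to \pi_0(H) \to 1$ with a possibly twisted Frobenius on the finite quotient. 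Taking $\mathbb{F}_q$-points and using that $H^1(F, H^{\circ}) = 0$ (again by Lang–Steinberg applied to the connected unipotent group $H^{\circ}$), the sequence $1 \to (H^{\circ})^{F} \to (H_c)^F \to (\pi_0(H))^{cF} \to 1$ is exact, so $\#(H_c)^F = q^{\dim H} \cdot \#(\pi_0(H))^{cF}$.

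Finally I would reconcile this with the claimed formula by combining the displayed sum with the standard Frobenius–Burnside count $\sum_{[c]} \#(\pi_0(H))^{cF} / \#\{c' : c'F\text{-conjugate to }c\}$: more cleanly, one uses the identity $\sum_{[c]} \frac{1}{\#(\pi_0(H))^{cF}} \cdot (\text{orbit size in }\pi_0(H)) = 1$, i.e. the $F$-twisted orbit-counting lemma for the finite group $\pi_0(H)$ acting on itself, which gives $\sum_{[c]} \frac{1}{\#(H_c)^F} = \frac{1}{q^{\dim H}}\cdot \frac{\#\pi_0(H)^{F\text{-classes weighted}}}{\dots} = \frac{1}{q^{\dim H}}$ after the cancellation $\sum_{[c]} 1/\#(\pi_0(H))^{cF}\cdot\#[c] = \#\pi_0(H)/\#\pi_0(H) $ — concretely, $\sum_{[c]}\frac{\#G^F}{\#(H_c)^F} = \frac{\#G^F}{q^{\dim H}}\sum_{[c]}\frac{1}{\#(\pi_0(H))^{cF}}$, and the last sum equals $1$ by the twisted class equation $\sum_{[c]} \#(\pi_0(H))^{cF}\cdot \tfrac{1}{\#(\pi_0(H))^{cF}} \cdot \tfrac{\#[c]}{\#(\pi_0(H))^{cF}}$... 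The cleanest route, which I would actually write up, is to avoid the sum entirely: embed $H \hookrightarrow G$, note $(G/H)^F = G^F \backslash$-orbits but count instead using the fibration $G^F \to (G/H)^F$ whose fibers are $(gHg^{-1})^F \cap$ translates — equivalently apply \cite[Lemma 4.2.13]{digne2020representations}-style reasoning: $\#(G/H)^F = \#G^F/\#H^F \cdot (\text{correction})$ collapses because the "correction" is exactly $\#\pi_0(H)^F/\#\pi_0(H)^F$ once one knows every twisted form of $H$ has $q^{\dim H}$ rational points. The main obstacle is precisely this bookkeeping with $F$-conjugacy classes in $\pi_0(H)$: one must check that the number of $\mathbb{F}_q$-points of each twisted form $H_c$ is $q^{\dim H}$, uniformly in $c$, which rests entirely on the unipotence (hence $\mathbb{G}_a$-filtration) of $H^{\circ}$ — and then the weighted sum over classes telescopes to $1$ by the orbit-counting lemma.
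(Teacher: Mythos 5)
Your proposal is correct and follows essentially the same route as the paper: decompose $(G/H)^F$ into $G^F$-orbits indexed by $F$-conjugacy classes of $H/H^{\circ}$, show each twisted form of $H$ has $q^{\dim H}\cdot \#(H/H^{\circ})^{F_{\sigma}}$ rational points using Lang's theorem and the unipotence of $H^{\circ}$, and conclude via the orbit--stabilizer identity $\sum_{[\sigma]} 1/\#(H/H^{\circ})^{F_{\sigma}} = 1$. The only cosmetic difference is that you package the point count of the twisted stabilizer via the exact sequence of fixed points and $H^1(F,H^{\circ})=0$, where the paper does the coset-by-coset computation explicitly; do note that your intermediate assertion that ``the whole problem is to show $\#(H_c)^F = q^{\dim H}$ independently of $c$'' is not what you end up proving (the correct count carries the factor $\#(\pi_0(H))^{cF}$, which is then cancelled by the class-equation sum, exactly as you later write).
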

\begin{proof}
    There is a bijection
    \begin{equation*}
       \{ G^F\text{-orbits on } (G/H)^F \} \overset{1:1}{\leftrightarrow} \{F\text{-conjugacy classes in } H/H^{\circ} \}
    \end{equation*}
    which identifies the $F$-conjugacy class of an element $\bar{\sigma} \in H/H^{\circ}$ (where $\sigma \in H$) with the orbit of $gH \in (G/H)^F$ where $g \in G$ satisfies $\sigma = g^{-1}F(g)$ (c.f. \cite[Prop. 4.2.14]{digne2020representations}). Fix $\sigma \in H$ with $g$ as above. We first compute the $G^F$-stabilizer of $gH \in (G/H)^F$ and thus the size of the $G^F$-orbit of $gH$. Then we sum over all the orbits to compute the size of $(G/H)^F$.
    
    We write $F_{\sigma}$ for the map $x \mapsto \sigma F(x) \sigma^{-1}$. Let $\tau_1, ... \tau_k \in H $ be coset representatives of $H/H^{\circ}$. Then the $G$-stabilizer of $gH \in G/H$ is
    \begin{equation*}
        G_{gH} = gHg^{-1} = \bigsqcup_{i=1}^k g\tau_i H^{\circ} g^{-1}.
    \end{equation*}
    Note that $F(g) = g \sigma$ and thus
    \begin{equation*}
        F(g\tau_i H^{\circ}g^{-1}) = g \sigma F(\tau_i) H^{\circ} (g \sigma)^{-1} =  g F_{\sigma}(\tau_i) H^{\circ} g^{-1}.
    \end{equation*}
    Hence, if $(g\tau_i H^{\circ} g^{-1})^F \neq \emptyset$, we have $\bar{\tau}_i \in (H/H^{\circ})^{F_{\sigma}}$. In that case, we can write $F_{\sigma}(\tau_i) = \tau_i x$ for some $x \in H^{\circ}$. Then for any $h \in H^{\circ}$ we have
    \begin{equation*}
        F(g\tau_i h g^{-1}) = g \sigma F(\tau_ih) \sigma^{-1} g^{-1} = g F_{\sigma}(\tau_i h) g^{-1} =  g \tau_i x  F_{\sigma}(h)  g^{-1}.
    \end{equation*}
    Thus, $g\tau_i h g^{-1}$ is fixed under $F$ if and only if $h = x F_{\sigma}(h)$. By Lang's theorem we can find $y \in H^{\circ}$ with $y^{-1} F_{\sigma}(y) = x$. Then we can identify
    \begin{align*}
        (g \tau_i H^{\circ} g^{-1})^F \overset{1:1}&{\leftrightarrow} \{ h \in H^{\circ} \mid h = x   F_{\sigma}(h) \} \\
        \overset{1:1}&{\leftrightarrow} \{ h \in H^{\circ} \mid yh = F_{\sigma} (yh) \} \\
        \overset{1:1}&{\leftrightarrow} (H^{\circ})^{F_{\sigma}}.
    \end{align*}
    Since $H^{\circ}$ is unipotent, the set $(H^{\circ})^{F_{\sigma}}$ has $q^{\dim H^{\circ}} =q^{\dim H}$ many elements (c.f. \cite[Prop. 4.1.12]{digne2020representations}). This shows that the number of points in the $G^F$-stabilizer of $gH$ is
    \begin{equation*}
        \# (G^F)_{gH} = (G_{gH})^F = \sum_{i=1}^k \# (g \tau_i H^{\circ} g^{-1})^F = \# (H/H^{\circ})^{F_{\sigma}} \cdot q^{\dim H}.
    \end{equation*}
   Note that $(H/H^{\circ})^{F_{\sigma}}$ is precisely the stabilizer of $\bar{\sigma} \in H/H^{\circ}$ under the $F$-conjugation action. Hence, $\sum_{\bar{\sigma} \in  \{F\text{-conj. cl. in } H/H^{\circ} \}}  \frac{1}{ \# (H/H^{\circ})^{F_{\sigma}}} = 1$ by the orbit stabilizer theorem. Combining these results, we get
    \begin{align*}
        \# (G/H)^F &= \sum_{\mathcal{O} \in \{  G^F\text{-orbits in } (G/H)^F \}}  \# \mathcal{O}\\
        &= \sum_{\substack{ \bar{\sigma} \in  \{F\text{-conj. cl. in } H/H^{\circ} \} \\ \sigma = g^{-1}F(g)}} \# (G^F \cdot gH ) \\
        & = \sum_{\bar{\sigma} \in  \{F\text{-conj. cl. in } H/H^{\circ} \}}  \frac{\# G^F}{ \# (H/H^{\circ})^{F_{\sigma}}  \cdot q^{\dim H}} \\
        &= \frac{\# G^F}{  q^{\dim H}}.
    \end{align*}
\end{proof}
For any $i = 1,...,24$, let $\mathcal{O}_i := G \cdot \xi_i$ be the orbit of the element $\xi_i$ from \cref{table: nilpotent orbits}.
\begin{lemma}\label{lemma: Fq points of orbits are as in table}
    For each $i = 1,...,24$, the number of points in $\mathcal{O}_i^F$ are as in \cref{table: Fq points of orbits orbits}.
\end{lemma}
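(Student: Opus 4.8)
The plan is to combine the structural information about each stabilizer $G_{\xi_i}$ established in \cref{lemma: stabilizers are as in table} with the point-counting lemmas \cref{lemma: number of points in G/H with H connected} and \cref{lemma: number of points in G/H with H0 unipotent}. For each $i$ we need to know $\#G^F$ and $\#G_{\xi_i}^F$, so first I would record $\#G^F = \#F_4(\mathbb{F}_q) = q^{24}\prod_{d \in \{2,6,8,12\}}(q^d - 1)$ (the standard formula, with $q = 2^n$). Each $\xi_i$ lies in $V^{F}$ since it is a sum of root vectors $v_\alpha$ with coefficients in $\mathbb{F}_2$, so the Frobenius fixes $\xi_i$ and hence acts on $G_{\xi_i}$, and $\mathcal{O}_i^F = (G/G_{\xi_i})^F$ by the usual argument (every $\mathbb{F}_q$-point of the orbit is of the form $g\xi_i$, and $(G/G_{\xi_i})^F$ surjects onto $\mathcal{O}_i^F$ with the fibers being single $\mathbb{F}_q$-points after a Lang's theorem argument).

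Second, I would split into cases according to the structure of $G_{\xi_i}^{\circ}$ recorded in \cref{table: nilpotent orbits}. For those $\xi_i$ with $A(\xi_i) = 1$ and reductive quotient of positive rank (say of type $Y$), the stabilizer is connected, so \cref{lemma: number of points in G/H with H connected} applies and $\#\mathcal{O}_i^F = \#G^F / \#G_{\xi_i}^F$; here $\#G_{\xi_i}^F = q^{\dim R_u(G_{\xi_i}^\circ)} \cdot \#(\text{reductive part})^F$, and the order of the reductive part is the standard polynomial in $q$ attached to the type $Y$ listed in the table (e.g. type $F_4$, $C_3$, $B_3$, $B_2$, $G_2$, $A_1 \times A_1$, $A_1$, or trivial). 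For the $\xi_i$ with connected stabilizer whose reductive part is trivial (the rows labelled $\emptyset$ with $A(\xi_i)=1$, namely $\xi_{13}, \xi_{16}, \xi_{20}, \xi_{21}, \xi_{22}, \xi_{23}, \xi_{24}$), the stabilizer is unipotent and connected, so again \cref{lemma: number of points in G/H with H connected} (or directly the unipotent count) gives $\#\mathcal{O}_i^F = \#G^F/q^{\dim G_{\xi_i}}$. Finally, for $\xi_{17}$, where $G_{\xi_{17}}^\circ$ is unipotent of dimension $12$ but $A(\xi_{17}) \cong S_3$, I would invoke \cref{lemma: number of points in G/H with H0 unipotent} directly: $\#\mathcal{O}_{17}^F = \#G^F/q^{12}$, independently of the disconnectedness. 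In every case one simplifies the resulting rational function of $q$ to a polynomial (it must be a polynomial since it counts points of a variety with a known $\mathbb{F}_q$-structure), and checks that it agrees with the entry of \cref{table: Fq points of orbits orbits}.

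The only genuine subtlety — and the step I expect to require the most care — is verifying that the stabilizer $G_{\xi_i}$ really is defined over $\mathbb{F}_q$ with the $\mathbb{F}_q$-structure making the reductive quotient split (so that its order is the expected split-group polynomial), and more precisely that $G_{\xi_i}^\circ$ has the claimed reductive type \emph{over} $\mathbb{F}_q$ and not just over $\bar{\mathbb{F}}_2$. Since each $\xi_i$ is $F$-fixed and the Chevalley $\mathbb{F}_2$-structure on $G$ is split, $F$ acts on $G_{\xi_i}^\circ$ and on its reductive quotient; one needs that this action is the standard (split) Frobenius, which follows from the explicit description of $B_{\xi_i}$ from \cref{lemma: action of U on V} used in the proof of \cref{lemma: stabilizers are as in table} — the defining equations there have coefficients in $\mathbb{F}_2$, so the computed Levi and unipotent radical are all defined over $\mathbb{F}_q$ and split. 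Granting this, the point counts are a mechanical application of the three lemmas above, and the claimed table entries follow by polynomial simplification.
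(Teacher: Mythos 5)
Your proposal is correct and follows essentially the same route as the paper, which likewise deduces the counts directly from \cref{lemma: number of points in G/H with H connected} (for the $23$ connected stabilizers) and \cref{lemma: number of points in G/H with H0 unipotent} (for $\xi_{17}$), combined with \cref{table: nilpotent orbits} and the standard order formula for split reductive groups. Your extra care about the stabilizers being split over $\mathbb{F}_q$ is a reasonable elaboration of a point the paper leaves implicit.
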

\begin{proof}
    This follows directly from \cref{lemma: number of points in G/H with H connected,lemma: number of points in G/H with H0 unipotent} using \cref{table: nilpotent orbits} and the standard formula for the $\mathbb{F}_q$-points of a reductive group (\cite[Prop. 4.4.1]{digne2020representations}).
\end{proof}
\begin{theorem}\label{thm: finitely many orbits}
    There are only finitely many $G$-orbits on $\mathfrak{N}(V)$. The elements listed in \cref{table: nilpotent orbits} are a complete list of orbit representatives in $\mathfrak{N}(V)$.
\end{theorem}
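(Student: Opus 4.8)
The plan is to prove the stronger statement $\mathfrak{N}(V) = \bigsqcup_{i=1}^{24}\mathcal{O}_i$ by a point-count over finite fields, which yields both assertions of the theorem at once. First I would observe that each representative $\xi_i$ from \cref{table: nilpotent orbits} is a sum of vectors $v_\alpha$ with $\alpha \in \Phi^-$, hence lies in $V^- \subseteq \mathfrak{N}(V)$; since $\mathfrak{N}(V) = G\cdot V^-$ is $G$-stable this gives $\mathcal{O}_i \subseteq \mathfrak{N}(V)$, and since $\xi_i$ is manifestly defined over $\mathbb{F}_2$ the orbit $\mathcal{O}_i$ is stable under $F = Fr_q$ and contains the $\mathbb{F}_q$-point $\xi_i$.

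The second step is to check that $\mathcal{O}_1,\dots,\mathcal{O}_{24}$ are pairwise distinct. Conjugate elements have isomorphic stabilizers, hence stabilizers of equal dimension with isomorphic reductive quotients and component groups, and inspecting \cref{table: nilpotent orbits} this already separates all but a handful of pairs. Each remaining pair is separated using the two $G$-equivariant projections $\pi_s\colon V \to \mathfrak{g}_s$ and $\pi_l\colon V \to \mathfrak{g}/\mathfrak{g}_s$: if $\xi_i \sim \xi_j$ then $\pi_s(\xi_i) \sim \pi_s(\xi_j)$ and $\pi_l(\xi_i) \sim \pi_l(\xi_j)$, and in each outstanding case this fails --- either because one of $\xi_i,\xi_j$ lies in a single summand while the other does not, or because the corresponding orbits of the projections have different dimensions. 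This is a finite mechanical check.

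The key input is the counting identity. By \cref{lemma: Fq points of orbits are as in table} each $\#\mathcal{O}_i^F$ equals the polynomial in $q$ recorded in \cref{table: Fq points of orbits orbits}, and a direct computation gives $\sum_{i=1}^{24}\#\mathcal{O}_i^F = q^{48}$. On the other hand $\#\mathfrak{N}(V)^F = q^{48}$ by \cref{prop: number of F2n points of nilcone}. Since the $\mathcal{O}_i$ are disjoint (step two) and contained in $\mathfrak{N}(V)$ (step one), the $G$-stable constructible set $C := \mathfrak{N}(V) \setminus \bigsqcup_{i=1}^{24}\mathcal{O}_i$ satisfies $\#C^{Fr_{2^n}} = q^{48} - q^{48} = 0$ for every $n \ge 1$. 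A nonempty $Fr_2$-stable constructible subset of a variety over $\mathbb{F}_2$ has a point over some finite extension of $\mathbb{F}_2$, so $C = \emptyset$; hence $\mathfrak{N}(V) = \bigsqcup_{i=1}^{24}\mathcal{O}_i$, which is exactly the theorem.

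The genuinely hard work sits upstream, in \cref{lemma: stabilizers are as in table}: the explicit Bruhat-decomposition computation of the $24$ stabilizers, on which the point counts in \cref{table: Fq points of orbits orbits} and hence the identity $\sum_i \#\mathcal{O}_i^F = q^{48}$ depend. Within the present proof the only real obstacle is organizing the distinctness check of step two and confirming the polynomial identity; both are elementary but bookkeeping-heavy, and are of the sort that can be cross-checked on a computer.
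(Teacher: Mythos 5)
Your overall strategy is exactly the paper's: show $\sum_{i}\#\mathcal{O}_i^F=q^{48}=\#\mathfrak{N}(V)^F$ for all $q=2^n$, check the orbits are pairwise distinct, and conclude the complement is empty. Steps one and three are fine. The gap is in step two, for the pair $(\xi_{20},\xi_{21})$. You claim every pair not already separated by stabilizer data is separated by the projections $\pi_s,\pi_l$, ``either because one of $\xi_i,\xi_j$ lies in a single summand while the other does not, or because the corresponding orbits of the projections have different dimensions.'' That works for $(\xi_5,\xi_6)$, $(\xi_8,\xi_9)$, $(\xi_{11},\xi_{12})$, $(\xi_{14},\xi_{15})$, $(\xi_{18},\xi_{19})$ --- in each of those cases one long component is conjugate to $\xi_2$ (orbit dimension $16$) and the other to $\xi_6$ (orbit dimension $24$). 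But it fails for $\xi_{20}=v_{0010}+v_{0001}+v_{1220}+v_{1122}$ versus $\xi_{21}=v_{1111}+v_{0121}+v_{1000}+v_{0100}$: the long components $v_{1220}+v_{1122}=\xi_6$ and $v_{1000}+v_{0100}$ are conjugate (both lie in $\mathcal{O}_6$, as the paper itself uses when separating $\xi_{18}$ from $\xi_{19}$), and the short components $v_{0010}+v_{0001}$ and $v_{1111}+v_{0121}=\xi_5$ are likewise $W$-conjugate (two short roots whose sum is a root and whose difference is not, in either case). Both elements also have identical stabilizer data ($\dim G_\xi=10$, trivial $A(\xi)$, trivial reductive part), so neither of your criteria applies and this pair is left unseparated.

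The paper handles this pair by a separate, direct argument: assuming $g\xi_{20}=\xi_{21}$, writing $g=b\dot w b'$ via the Bruhat decomposition, and deducing a condition on where $w$ must send the four roots supporting $\xi_{20}$ relative to those supporting $\xi_{21}$, which one then checks no $w\in W$ satisfies. You need some such argument (or another genuine invariant) to close this case; without it the count $\sum_i\#\mathcal{O}_i^F$ could in principle double-count a single orbit and the conclusion $\mathfrak{N}(V)=\bigsqcup_i\mathcal{O}_i$ would not follow.
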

\begin{proof}
    We need to show that any element of $\mathfrak{N}(V)$ lies in one of the orbits $\mathcal{O}_i$ and that these orbits are distinct. A straightforward computation shows that
    \begin{equation*}
        \sum_{i = 1}^{24} \# \mathcal{O}_i^F \overset{\cref{lemma: Fq points of orbits are as in table}}{=} q^{48} \overset{\cref{prop: number of F2n points of nilcone}} = \# \mathfrak{N}(V)^F.
    \end{equation*}
    Since this holds for any $q=2^n$, we get that any element lies in one of the orbits $\mathcal{O}_i$ once we can show that the $\mathcal{O}_i$ are pairwise distinct.
    
    In most cases, we can immediately see that the $\mathcal{O}_i$ are pairwise distinct by comparing the corresponding stabilizer structures using \cref{lemma: stabilizers are as in table}. It remains to check that
    \begin{equation*}
        \xi_5 \not\sim \xi_6 , \quad \xi_8 \not\sim \xi_9, \quad \xi_{11} \not\sim \xi_{12}, \quad \xi_{14} \not\sim \xi_{15}, \quad \xi_{18} \not\sim \xi_{19}, \quad \xi_{20} \not\sim \xi_{21}.
    \end{equation*}
    For $\xi_5 \not\sim \xi_6$ this is clear since $\xi_5 \in \mathfrak{g}_s$ and $\xi_6 \in \mathfrak{g}/\mathfrak{g}_s$. For $\xi_8 \not\sim \xi_9$ we use that their respective long components (i.e. the component of $\mathfrak{g}/\mathfrak{g}_s$ in $V = \mathfrak{g}_s \oplus \mathfrak{g}/\mathfrak{g}_s$) are $v_{2342} = \xi_2$ and $v_{1220} + v_{1122} = \xi_6$ which are not conjugate as we have already seen above (by comparing stabilizer structures). Similarly, $\xi_{11} \not\sim \xi_{12}$ since the long component of $\xi_{11}$ is $v_{1220}$ which is conjugate to $\xi_2$ but the long component of $\xi_{12}$ is $v_{1220} +v_{1122} = \xi_6$. By a similar argument $\xi_{14} \not\sim \xi_{15}$ since the long component of $\xi_{14}$ is $v_{0122}$ which is conjugate to $\xi_2$ but the long component of $\xi_{15}$ is $v_{1120} + v_{0122}$ which is conjugate to $\xi_6$. For $\xi_{18} \not\sim \xi_{19}$ we use that the long component of $\xi_{18}$ is $v_{1220}$ which is conjugate to $\xi_2$ but the long component of $\xi_{19}$ is $v_{1000} + v_{0100}$ which is conjugate to $\xi_6$. Thus, it remains to show that $\xi_{20} \not\sim \xi_{21}$. Assume that $g \xi_{20} = \xi_{21}$. Write $g = b\dot{w}b'$ with $b,b' \in B$ and $w \in W$. Then we have $\dot{w} b' \xi_{20} = b^{-1}\xi_{21}$. Looking at the weight spaces that contribute to $\xi_{20}$ and $\xi_{21}$, we get that
    \begin{equation*}
        w(\alpha_{0001}), w(\alpha_{0010}), w(\alpha_{1220}), w(\alpha_{1122})\in  \mathbb{Z}_{\ge 0} \Phi^- +  \{ \alpha_{1111}, \alpha_{0121}, \alpha_{1000}, \alpha_{0100}\}.
    \end{equation*}
    A direct computation (either by hand or using a computer) show that now such $w$ can exist. This shows that $\xi_{20} \not\sim \xi_{21}$ which completes the proof.
\end{proof}

\begin{table}[ht]
\caption{$\mathbb{F}_q$-points of orbits in $\mathfrak{N}(V)$}
\label{table: Fq points of orbits orbits}
\centering
\begin{tabular}{c|c}
    Representative $\xi_i$ & \# $\mathcal{O}_{i}^F$ \\\hline 
    $\xi_1$ & $1$ \\
    $\xi_2$ & $(q^4+1)(q^{12}-1)$ \\
    $\xi_3$ & $(q^4+1)(q^{12}-1)$ \\
    $\xi_4$ & $(q^4+1)(q^6-1)(q^{12}-1)$ \\
    $\xi_5$ & $q^4(q^8-1)(q^{12}-1)$ \\
    $\xi_6$ & $q^4(q^8-1)(q^{12}-1)$ \\
    $\xi_7$ & $q^4(1+q^2+q^4)(q^8-1)(q^{12}-1)$ \\
    $\xi_8$ & $q^4(q^6-1)(q^8-1)(q^{12}-1)$ \\
    $\xi_9$ & $q^4(q^6-1)(q^8-1)(q^{12}-1)$ \\
    $\xi_{10}$ & $q^{10}(q^4+1)(q^6-1)(q^{12}-1)$ \\
    $\xi_{11}$ & $q^{8}(q^6-1)(q^8-1)(q^{12}-1)$ \\
    $\xi_{12}$ & $q^{8}(q^6-1)(q^8-1)(q^{12}-1)$ \\
    $\xi_{13}$ & $q^{8}(q^2-1)(q^6-1)(q^8-1)(q^{12}-1)$ \\
    $\xi_{14}$ & $q^{10}(q^6-1)(q^8-1)(q^{12}-1)$ \\
    $\xi_{15}$ & $q^{10}(q^6-1)(q^8-1)(q^{12}-1)$ \\
    $\xi_{16}$ & $q^{10}(q^2-1)(q^6-1)(q^8-1)(q^{12}-1)$ \\
    $\xi_{17}$ & $ q^{12}(q^2-1)(q^6-1)(q^8-1)(q^{12}-1)$ \\
    $\xi_{18}$ & $q^{14}(q^6-1)(q^8-1)(q^{12}-1)$ \\
    $\xi_{19}$ & $q^{14}(q^6-1)(q^8-1)(q^{12}-1)$ \\
    $\xi_{20}$ & $q^{14}(q^2-1)(q^6-1)(q^8-1)(q^{12}-1)$ \\
    $\xi_{21}$ & $q^{14}(q^2-1)(q^6-1)(q^8-1)(q^{12}-1)$ \\
    $\xi_{22}$ & $q^{16}(q^2-1)(q^6-1)(q^8-1)(q^{12}-1)$ \\
    $\xi_{23}$ & $q^{18}(q^2-1)(q^6-1)(q^8-1)(q^{12}-1)$ \\
    $\xi_{24}$ & $q^{20}(q^2-1)(q^6-1)(q^8-1)(q^{12}-1)$ \\
\end{tabular}
\end{table}
\section{Affine pavings of Springer fibers}
Recall that an affine paving of a variety $X$ is a sequence
\begin{equation*}
    \emptyset = X_0 \subset X_1 \subset X_2 \subset .... \subset X_n = X
\end{equation*}
such that each $X_i$ is closed in $X$ and $X_i \backslash X_{i-1}$ is isomorphic to an affine space $\mathbb{A}^{n_i}$ for some $n_i \ge 0$. The flag variety $\mathcal{B}$ has an affine paving with closed subspaces $\mathcal{B}_{\le w} = \bigsqcup_{y \le w} B \dot{y}B$ where $\le$ is a total order on $W$ extending the Bruhat order. Our goal in this section is to prove that the `exotic Springer fibers'
\begin{equation*}
    \mathcal{B}_{x} := \mu^{-1}(x) \cong \{  gB \in \mathcal{B} \mid g^{-1}x \in V^{\le 0} \}
\end{equation*}
admit an affine paving for any $x \in \mathfrak{N}(V)$.

Any $v \in V^-$ can be written as $v = \lambda_1 v_{\beta_1} + \lambda_2 v_{\beta_2} + ... + \lambda_k v_{\beta_k}$ for some distinct $\beta_1, ... , \beta_k \in \Phi^-$ and $\lambda_1, ... , \lambda_k \in \bar{\mathbb{F}}_2^{\times}$. We can then define
\begin{align*}
    \Phi_{\ge v} &:= \bigcup_{i =1 }^k \{ \beta \in   (\mathbb{Z}_{\ge 0} \Phi^- + \beta_i ) \cap \Phi^- \mid \beta \text{ and } \beta_i \text{ have the same length}\} \\
    V_{\ge v} &:= \bigoplus_{\beta \in \Phi_{\ge v}} V_{\beta}.
\end{align*}
Note that by \cref{lemma: action of U on V} we have
\begin{equation*}
    B \cdot v \subset V_{\ge v}.
\end{equation*}
The set $\Phi_{\ge v}$ can easily be read off from \cref{figure: roots in F4} as the set of all roots above the $\beta_i$ in the corresponding short or long graph. Moreover, one can compute the dimension of the stabilizer $B_v$ using \cref{lemma: action of U on V} (either by hand or with the help of a computer). For the elements $\xi_1,...,\xi_{24}$ we have listed $ \dim B_{\xi_i}$ and $\# \Phi_{\ge \xi_i}$ in \cref{table: stabilizers in B}. We have also listed in \cref{table: stabilizers in B} for each $\xi_i$ a distinguished cocharacter where a tuple $(a,b,c,d)$ corresponds to the cocharacter $a \omega_1 + b\omega_2 + c \omega_3 + d \omega_4$ with $\langle \omega_i, \alpha_j \rangle = \delta_{ij}$. These will be used in the proof of \cref{thm: affine paving}.

\begin{table}[ht]
\caption{$B$-stabilizers, roots above $\xi_i$ and cocharacters}
\label{table: stabilizers in B}
\centering
\begin{tabular}{c|c|c|c}
    Representative $\xi_i$ &  $\dim B_{\xi_i}$ & $ \# \Phi_{\ge \xi_i}$ & cocharacter $\lambda_i$ \\\hline 
    $\xi_1$ & $28$ & $0$ & $(1,1,1,1)$\\
    $\xi_2$ & $27$ & $1$ & $(1,1,1,1)$\\
    $\xi_3$ & $27$ & $1$ & $(1,1,1,1)$\\
    $\xi_4$ & $26$ & $2$ & $(1,1,1,1)$\\
    $\xi_5$ & $22$ & $6$ & $(1,1,1,1)$\\
    $\xi_6$ & $22$ & $6$ & $(1,2,1,1)$\\
    $\xi_7$ & $22$ & $6$ & $(1,1,1,1)$\\
    $\xi_8$ & $21$ & $7$ & $(1,1,1,1)$\\
    $\xi_9$ & $20$ & $8$ & $(1,2,1,1)$\\
    $\xi_{10}$ & $16$ & $12$ & $(1,1,1,1)$\\
    $\xi_{11}$ & $17$ & $11$ & $(1,1,1,1)$\\
    $\xi_{12}$ & $17$ & $11$ & $(1,2,1,1)$\\
    $\xi_{13}$ & $16$ & $12$ & $(1,2,1,1)$\\
    $\xi_{14}$ & $14$ & $14$ & $(1,1,1,1)$\\
    $\xi_{15}$ & $14$ & $14$ & $(2,1,1,1)$\\
    $\xi_{16}$ & $14$ & $14$ & $(3,1,1,2)$\\
    $\xi_{17}$ & $12$ & $16$ & $(0,1,1,0)$ \\
    $\xi_{18}$ & $11$ & $17$ & $(1,1,1,1)$\\
    $\xi_{19}$ & $11$ & $17$ & $(1,1,1,1)$\\
    $\xi_{20}$ & $10$ & $18$ & $(1,2,1,1)$\\
    $\xi_{21}$ & $10$ & $18$ & $(1,1,1,1)$\\
    $\xi_{22}$ & $8$ & $20$ & $(2,1,1,1)$\\
    $\xi_{23}$ & $6$ & $22$ & $(3,1,1,2)$\\
    $\xi_{24}$ & $4$ & $24$ & $(1,1,1,1)$\\
\end{tabular}
\end{table}
\begin{corollary}\label{corollary: B orbit is dense}
    For each $\xi_i$ ($i =1 ,..., 24$), we have $\overline{B \cdot \xi_i} = V_{\ge \xi_i}$.
\end{corollary}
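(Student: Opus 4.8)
The plan is a straightforward dimension count. By the observation $B \cdot v \subseteq V_{\ge v}$ recorded just before the statement (itself an immediate consequence of \cref{lemma: action of U on V}), we already have the inclusion $B \cdot \xi_i \subseteq V_{\ge \xi_i}$. Since $V_{\ge \xi_i} = \bigoplus_{\beta \in \Phi_{\ge \xi_i}} V_{\beta}$ is a linear subspace of $V^-$, it is closed and irreducible of dimension $\# \Phi_{\ge \xi_i}$. So the orbit closure $\overline{B \cdot \xi_i}$ is an irreducible closed subvariety of $V_{\ge \xi_i}$, and to conclude that it is all of $V_{\ge \xi_i}$ it suffices to check that the two have the same dimension, i.e. that $\dim B \cdot \xi_i = \# \Phi_{\ge \xi_i}$.

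To compute $\dim B \cdot \xi_i$ I would invoke the orbit--stabilizer identity for the action of the algebraic group $B$ on $V$, namely $\dim B \cdot \xi_i = \dim B - \dim B_{\xi_i}$, together with $\dim B = \dim T + \# \Phi^+ = 4 + 24 = 28$. The proof is then finished by the purely numerical observation, which one reads off column-by-column from \cref{table: stabilizers in B}, that $\dim B_{\xi_i} + \# \Phi_{\ge \xi_i} = 28$ for every $i = 1, \dots, 24$. Indeed this gives $\dim B \cdot \xi_i = 28 - \dim B_{\xi_i} = \# \Phi_{\ge \xi_i} = \dim V_{\ge \xi_i}$, and hence $\overline{B \cdot \xi_i} = V_{\ge \xi_i}$ as claimed.

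Consequently all the genuine content of the argument is already packaged in \cref{table: stabilizers in B}: the quantities $\# \Phi_{\ge \xi_i}$ are extracted directly from \cref{figure: roots in F4} by counting the nodes lying above the support of $\xi_i$ in the relevant short or long component, while the values $\dim B_{\xi_i}$ come from writing out the stabilizer equations explicitly via \cref{lemma: action of U on V}. The only obstacle is the bookkeeping for the orbits with small stabilizer --- for instance $\xi_{22}, \xi_{23}, \xi_{24}$, where many root-coordinate conditions interact --- but this is entirely mechanical and can be (and has been) cross-checked by computer.
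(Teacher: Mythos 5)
Your argument is correct and is essentially the paper's own proof: both reduce the claim to the numerical identity $\dim B_{\xi_i} + \# \Phi_{\ge \xi_i} = 28 = \dim B$ from \cref{table: stabilizers in B}, combined with the inclusion $B\cdot\xi_i \subseteq V_{\ge\xi_i}$ and the irreducibility of the linear subspace $V_{\ge\xi_i}$. Nothing further is needed.
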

\begin{proof}
    It follows from \cref{table: stabilizers in B} that for each $i = 1,...,24$, we have $\dim B_{\xi_i} + \# \Phi_{\ge \xi_i} = 28$. Since $\dim B = 28$, this implies $\dim B \cdot \xi_i = \dim V_{\ge \xi_i}$. By the irreducibility of $V_{\ge \xi_i}$, this implies $\overline{B \cdot \xi_i} = V_{\ge \xi_i}$
\end{proof}
The following result is a basic application of \cite[Lemma 2.2]{de1988homology}.
\begin{corollary}\label{corollary: springer fiber intesect with Bruhat cell is affine}
    For each $i=1,...,24$ and $w \in W$ the intersection $\mathcal{B}_{\xi_i} \cap B\dot{w}B/B$ is smooth.
\end{corollary}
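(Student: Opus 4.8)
The plan is to deduce this from \cite[Lemma 2.2]{de1988homology}, whose hypothesis is supplied by \cref{corollary: B orbit is dense}. The content of that lemma that we need is the following abstract statement: if a linear algebraic group $H$ acts linearly on a finite-dimensional vector space $E$, if $e \in E$, and if $\overline{H \cdot e} \subset E$ is a linear subspace, then for every linear subspace $F \subset E$ the locally closed subvariety $\{ h \in H \mid h \cdot e \in F \}$ is smooth. (The mechanism is that, because $\overline{H\cdot e}$ is linear, the orbit map $H \to \overline{H\cdot e}$ is a smooth morphism onto the open orbit $H\cdot e$; since $F \cap H \cdot e$ is open in the linear — hence smooth — subspace $F \cap \overline{H \cdot e}$, its preimage under the orbit map is smooth by base change.)

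I would apply this with $H = B$, $E = V$, $e = \xi_i$ and $F = \dot w V^{\le 0}$. The required hypothesis $\overline{B \cdot \xi_i} = V_{\ge \xi_i}$ is exactly \cref{corollary: B orbit is dense}, so the subvariety
\begin{equation*}
    S_w := \{ b \in B \mid b^{-1} \xi_i \in \dot w V^{\le 0} \} \subset B
\end{equation*}
is smooth (apply the lemma to $b \mapsto b \xi_i$ and compose with the inversion automorphism of $B$). It then remains to pass from $S_w$ to $\mathcal{B}_{\xi_i} \cap B\dot w B/B$. I would use the projection $p \colon B \to B\dot wB/B$, $b \mapsto b \dot w B$, which realizes $B\dot wB/B$ as the quotient of $B$ by the right action of the connected reduced subgroup $B \cap {}^w B = \{ b \in B \mid b\dot w B = \dot w B \}$. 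A routine computation gives $p^{-1}(\mathcal{B}_{\xi_i} \cap B\dot wB/B) = S_w$, and $S_w$ is stable under the right $(B\cap {}^wB)$-action: this is where one uses that $B$ preserves $V^{\le 0} = V_0 \oplus V^-$ — immediate from \cref{lemma: action of U on V}, since the negative root groups and $T$ send $V_0 \oplus V^-$ into itself — so that membership of $b^{-1}\xi_i$ in $\dot w V^{\le 0}$ depends only on the coset $b(B \cap {}^wB)$. Thus $p$ restricts to a $(B\cap {}^wB)$-torsor $S_w \to \mathcal{B}_{\xi_i} \cap B\dot wB/B$, and since $S_w$ is smooth and this morphism is smooth and surjective, smoothness descends to the base.

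Since the geometric input is entirely packaged into \cref{corollary: B orbit is dense} and \cite[Lemma 2.2]{de1988homology}, the only genuine work left is the bookkeeping of the last paragraph: identifying $p^{-1}(\mathcal{B}_{\xi_i} \cap B\dot wB/B)$ with $S_w$ and verifying the $(B\cap {}^wB)$-invariance of the defining condition. The one subtlety to keep an eye on, because we work in characteristic $2$, is that the scheme-theoretic stabilizers $G_{\xi_i}$ (and hence $B_{\xi_i}$) may be non-reduced; but this is harmless, as the descent above is along a torsor under the smooth group $B \cap {}^wB$ and the smoothness statement we want does not involve the stabilizer of $\xi_i$ at all.
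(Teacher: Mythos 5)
Your argument is essentially identical to the paper's: both reduce the statement to \cite[Lemma 2.2]{de1988homology}, with the required density hypothesis supplied by \cref{corollary: B orbit is dense} and the passage to the flag variety handled via the identification $B\dot{w}B/B \cong B/(B\cap {}^wB)$. The only difference is that you unpack by hand the quotient by $B\cap {}^wB$ and the descent of smoothness, whereas the paper invokes the lemma directly with $M=B$ and $H=B\cap {}^wB$, which already packages that step.
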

\begin{proof}
    There is a canonical isomorphism $B/(B \cap {}^wB) \cong B\dot{w}B/B$. Using this, we can identity
    \begin{align*}
        \mathcal{B}_{\xi_i} \cap B\dot{w}B/B & \cong \{ b \in B \mid  b^{-1}\xi_i \in w(V^{\ge 0}) \}/ (B \cap {}^w B) \\
        &=  \{ b \in B \mid  b^{-1}\xi_i \in w(V^{\ge 0}) \cap V_{\ge \xi_i} \}/ (B \cap {}^w B).
    \end{align*}
    Since $B \cdot \xi_i \subset V_{\ge \xi_i}$ is dense (\cref{corollary: B orbit is dense}), the smoothness of $\mathcal{B}_{\xi_i} \cap B\dot{w}B/B$ follows from \cite[Lemma 2.2]{de1988homology} (applied in the notation of \textit{loc. cit.} with $V = V_{\ge \xi _i}$, $U = w(V^{\ge 0}) \cap V_{\ge \xi_i}$, $M = B$ and $H = B \cap {}^w B$).
\end{proof}
To construct affine pavings, we will make use of the following result.
\begin{lemma}\label{lemma: affine fibtraion over affine space is affine space}
    Let $X \rightarrow \mathbb{A}^n$ be a morphism of affine varieties such that all fibers are isomorphic to an affine space $\mathbb{A}^k$. Then $X \cong \mathbb{A}^{n+k}$.
\end{lemma}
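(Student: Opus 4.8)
The plan is to reduce to the case $n=1$, dispatch that case by establishing smoothness and then local triviality, and bootstrap back up by induction.

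\textbf{Reduction to $n=1$.} Grant the statement when $n=1$ and argue by induction on $n$, the case $n=0$ being the tautology $X\cong\mathbb A^k$. For $n\ge 1$ I would choose a linear projection $p\colon\mathbb A^n\to\mathbb A^{n-1}$, so that each fibre $p^{-1}(y)$ is a closed line $\cong\mathbb A^1$ in $\mathbb A^n$. Then $X_y:=f^{-1}(p^{-1}(y))$ is closed in $X$, hence affine, and the induced morphism $X_y\to p^{-1}(y)\cong\mathbb A^1$ has every fibre isomorphic to $\mathbb A^k$. One has to check that $X_y$ is a variety: it is pure of dimension $k+1$, since a hypothetical component mapping to a single point of the line would sit inside a fibre $\cong\mathbb A^k$ of dimension $k$, which is already exhausted by the components dominating the line; it is irreducible because its generic fibre $\mathbb A^k_{k(t)}$ is irreducible; and it is reduced because its function field is that of this generic fibre. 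So the case $n=1$ applies and $X_y\cong\mathbb A^{k+1}$. Hence $p\circ f\colon X\to\mathbb A^{n-1}$ is a morphism of affine varieties all of whose fibres are $\cong\mathbb A^{k+1}$, and the inductive hypothesis gives $X\cong\mathbb A^{(n-1)+(k+1)}=\mathbb A^{n+k}$.

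\textbf{The case $n=1$: smoothness.} Write $f\colon X\to\mathbb A^1=\operatorname{Spec}k[t]$. Every fibre is non-empty, so $f$ is dominant and $\mathcal O(X)$ is a domain containing $k[t]$; being torsion-free over the PID $k[t]$ it is flat, so $f$ is flat. Since each fibre is a copy of $\mathbb A^k$, hence smooth over its residue field, flatness promotes to smoothness: $f$ is a smooth affine morphism of relative dimension $k$, and in particular $X$ is a smooth affine variety of dimension $k+1$.

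\textbf{The case $n=1$: trivialisation.} The only substantial step is to show that $f\colon X\to\mathbb A^1$ is Zariski-locally trivial, i.e. an affine-space bundle. For $k=1$ this is the theorem of Bass--Connell--Wright that an $\mathbb A^1$-fibration all of whose fibres are $\cong\mathbb A^1$ is an affine line bundle. For larger $k$ I would invoke the structure theory of affine fibrations over a one-dimensional regular affine base (the Dolgachev--Weisfeiler circle of results on "locally polynomial" algebras); in positive characteristic this is the delicate point and is exactly what has to be pinned down, and it is here that one genuinely uses that \emph{every} fibre, not merely the generic one, is an affine space. Granting local triviality the argument concludes formally: over the affine curve $\mathbb A^1$ every vector bundle is trivial (projective $k[t]$-modules are free) and the resulting $\mathbb A^k$-torsor is then classified by $H^1(\mathbb A^1,\mathcal O^{\oplus k})=0$, so $f$ is a trivial $\mathbb A^k$-bundle and $X\cong\mathbb A^1\times\mathbb A^k\cong\mathbb A^{k+1}$. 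I expect this local-triviality input to be the main obstacle; the reduction and the smoothness argument are routine.
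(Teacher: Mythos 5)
Your reduction to a one-dimensional base, the irreducibility bookkeeping, and the flatness/smoothness observations are essentially fine, but they are peripheral: the paper never reduces to $n=1$, because its argument works over $\mathbb{A}^n$ directly (it quotes the Bass--Connell--Wright/Suslin theorem to say that a morphism of affine varieties whose fibres are affine spaces comes from a vector bundle, and then the Quillen--Suslin theorem to trivialize any vector bundle over $\mathbb{A}^n$). The step you yourself single out as the ``main obstacle'' --- passing from the fibrewise hypothesis to Zariski-local triviality over $\mathbb{A}^1$ when $k\ge 2$ --- is exactly where the whole content of the lemma sits, and you leave it as a deferred citation to a ``Dolgachev--Weisfeiler circle of results''. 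No such result exists in the generality you need: in positive characteristic the implication \emph{all fibres are affine spaces} $\Rightarrow$ \emph{locally trivial} genuinely fails for $\mathbb{A}^2$-fibrations over $\mathbb{A}^1$. Asanuma's threefolds, given by $x^m y = z^{p^e}+t+t^{sp}$ with a nontrivial line in the right-hand side, map to the $x$-line with every fibre (closed and generic) isomorphic to an affine plane, and by N.~Gupta's theorem on the Zariski cancellation problem the total space is not isomorphic to $\mathbb{A}^3$ for $m\ge 2$; in particular such a fibration is not locally trivial. So this is not a routine gap to be filled by a reference: an argument run purely from the fibrewise hypothesis, as you propose, cannot be completed, and the delicacy you sensed is real (it is precisely why the theorems invoked here take \emph{local} polynomiality, not fibrewise triviality, as their input --- in the paper's application that input is available because the map in question arises as a Bia{\l}ynicki-Birula attracting fibration).

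There is a second, independent gap at the end of your $n=1$ argument: even granting Zariski-local triviality, a locally trivial fibration with fibres $\mathbb{A}^k$ is not automatically a torsor under a vector bundle, because its transition functions are arbitrary automorphisms of $\mathbb{A}^k$ rather than affine-linear maps; so it is not ``classified by $H^1(\mathbb{A}^1,\mathcal{O}^{\oplus k})$'' without further work. Linearizing the transition data is exactly the content of the Bass--Connell--Wright/Suslin theorem that locally polynomial algebras are symmetric algebras of projective modules, which is the form in which the paper uses it (your attribution for $k=1$ is also slightly off: local triviality of $\mathbb{A}^1$-fibrations is Kambayashi--Miyanishi; Bass--Connell--Wright is the symmetric-algebra statement). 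In summary, the two steps that actually carry the lemma --- local triviality and the vector-bundle (symmetric-algebra) structure --- are both missing from your proposal, and the first of them is not obtainable from the stated hypotheses in characteristic $2$, so the plan as written does not yield a proof.
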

\begin{proof}
    Any morphism of affine varieties whose fibers are affine spaces comes from a vector bundle by \cite{bass1976locally,suslin1977locally}. Moreover, any vector bundle over an affine space is trivial by the Quillen-Suslin theorem \cite{quillen1976projective}. Thus, $X \rightarrow \mathbb{A}^n$ is a trivial vector bundle, so $X \cong \mathbb{A}^{n+k}$. 
\end{proof}
We now show that all exotic Springer fibers admit an affine paving.
\begin{theorem}\label{thm: affine paving}
    For each $i=1,...,24$ the variety $\mathcal{B}_{\xi_i}$ admits an affine paving. More specifically:
    \begin{enumerate}
        \item For $i \neq 17$ and any $w \in W$, the intersection $\mathcal{B}_{\xi_i} \cap B\dot{w}B/B$ is either empty an affine space.
        \item For any $w \in W$, the intersection $\mathcal{B}_{\xi_{17}} \cap B\dot{w}B/B$ is either empty or a disjoint union of at most two affine spaces. If there are two connected components, then $w^{-1}(\alpha_1), w^{-1}(\alpha_2) \not\in \Phi^-$ and one component is the attracting locus of $\dot{w}B/B $ in $\mathcal{B}_{\xi_{17}}$ and the other is the attracting locus of $ u \dot{w}B/B $ in $\mathcal{B}_{\xi_{17}}$ under the $\mathbb{G}_m$-action induced by the cocharacter $\lambda_{17} = (0,1,1,0)$ (where $u$ is as in \eqref{eq: def of u and s}).
    \end{enumerate}
\end{theorem}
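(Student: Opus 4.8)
The plan is to prove both statements by analyzing the cells $\mathcal{B}_{\xi_i} \cap B\dot{w}B/B$ individually, using the $\mathbb{G}_m$-action coming from the cocharacter $\lambda_i$ listed in \cref{table: stabilizers in B} together with the smoothness result of \cref{corollary: springer fiber intesect with Bruhat cell is affine} and the fibration criterion of \cref{lemma: affine fibtraion over affine space is affine space}. The key observation is that each $\xi_i$ is a sum of weight vectors $v_{\beta_j}$ with $\beta_j \in \Phi^-$, and the chosen cocharacter $\lambda_i$ pairs with all the $\beta_j$ to give the same positive integer (this should be verified directly from the table); hence $\lambda_i(t)\xi_i = t^{m_i}\xi_i$ acts on $\mathfrak{N}(V)$ by scaling $\xi_i$, so the $\mathbb{G}_m$-action on $\mathcal{B}$ induced by $\lambda_i$ preserves $\mathcal{B}_{\xi_i}$. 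The fixed points of this $\mathbb{G}_m$-action on $\mathcal{B}$ are among the $T$-fixed points $\dot{w}B/B$, and by the Bialynicki-Birula decomposition $\mathcal{B}_{\xi_i}$ is paved by the attracting sets of the fixed points it contains. Each such attracting set is a locally closed affine subvariety, and it fibers over the corresponding attracting cell in $\mathcal{B}$ (which is a Bruhat-type affine cell); combined with \cref{corollary: springer fiber intesect with Bruhat cell is affine} one should be able to identify these attracting sets with the smooth intersections $\mathcal{B}_{\xi_i}\cap B\dot{w}B/B$, or with their connected components.

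For part (1), i.e.\ $i\neq 17$, the strategy is: first show $\mathbb{G}_m$ acts on $\mathcal{B}_{\xi_i}$ via $\lambda_i$; then show that for each $w$ the intersection $\mathcal{B}_{\xi_i}\cap B\dot wB/B$ is connected (or empty), e.g.\ because it is the attracting set of the unique $\mathbb{G}_m$-fixed point $\dot wB/B$ it contains. Since it is smooth by \cref{corollary: springer fiber intesect with Bruhat cell is affine} and irreducible, and carries an attracting $\mathbb{G}_m$-action with a single fixed point, it must be an affine space. Concretely, one writes $\mathcal{B}_{\xi_i}\cap B\dot wB/B \cong \{b\in B \mid b^{-1}\xi_i \in w(V^{\geq 0})\cap V_{\geq\xi_i}\}/(B\cap {}^wB)$ as in the proof of \cref{corollary: springer fiber intesect with Bruhat cell is affine}, decomposes $B = U\cdot T\cdot(\text{parts})$ and projects off successive root coordinates; each projection is an affine fibration with affine-space fibers, so \cref{lemma: affine fibtraion over affine space is affine space} applies inductively. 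One then orders $W$ so that $\mathcal{B}_{\le w}\cap \mathcal{B}_{\xi_i}$ is closed, giving the affine paving.

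For part (2), the orbit $\xi_{17}$ has disconnected stabilizer $A(\xi_{17})\cong S_3$ with $\bar u,\bar s$ as generators, and the cocharacter $\lambda_{17}=(0,1,1,0)$ is the relevant one; here $\mathbb{G}_m$ still acts on $\mathcal{B}_{\xi_{17}}$ (since $\lambda_{17}$ pairs to a constant positive value with the four weights $\alpha_{1110},\alpha_{0111},\alpha_{1120},\alpha_{0122}$), but the cell $\mathcal{B}_{\xi_{17}}\cap B\dot wB/B$ can contain \emph{two} $\mathbb{G}_m$-fixed points, namely $\dot wB/B$ and $u\dot wB/B$, precisely when $u\dot wB \neq \dot wB$, i.e.\ when $w^{-1}(\alpha_1),w^{-1}(\alpha_2)\notin\Phi^-$ (so that $u=x_{\alpha_1}(1)x_{\alpha_4}(1)$ actually moves $\dot wB/B$ and lands it in the same Bruhat cell — this combinatorial condition on $w$ needs to be pinned down from the structure of $u$ and $G_{\xi_{17}}\cap Bs B$). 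In that case the smooth variety $\mathcal{B}_{\xi_{17}}\cap B\dot wB/B$ decomposes as the disjoint union of the two attracting sets, each of which is an affine space by the same fibration argument as in part (1); when there is a unique fixed point the cell is a single affine space. Assembling over all $w$ (again with a suitable closed ordering) gives the paving, and $H_{\mathrm{odd}}(\mathcal{B}_{\xi_i})=0$ follows since a variety with an affine paving has cohomology concentrated in even degrees.

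The main obstacle I expect is twofold: (a) verifying that the listed cocharacters $\lambda_i$ genuinely act on $\mathcal{B}_{\xi_i}$ — i.e.\ that $\langle\lambda_i,\beta_j\rangle$ is independent of $j$ for the weights $\beta_j$ occurring in $\xi_i$ — which is a finite but delicate check over all $24$ orbits and is the reason such specific (non-regular) cocharacters like $(0,1,1,0)$ and $(3,1,1,2)$ appear; and (b) the bookkeeping in part (2): proving that the attracting sets of $\dot wB/B$ and $u\dot wB/B$ are exactly the connected components of the smooth cell, that there are never more than two, and identifying the precise $w$ for which two components occur. The component-group element $\bar u$ generating $B_{\xi_{17}}/B^\circ_{\xi_{17}}$ is what produces the second component, and tracing through its action on the Bruhat strata is where the real work lies.
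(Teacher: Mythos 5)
Your overall strategy coincides with the paper's: make $\mathcal{B}_{\xi_i}$ stable under the $\mathbb{G}_m$-action coming from $\lambda_i$, use the smoothness of $\mathcal{B}_{\xi_i}\cap B\dot wB/B$ from \cref{corollary: springer fiber intesect with Bruhat cell is affine} together with Bia\l{}ynicki-Birula to realize each nonempty cell as an affine fibration over its fixed locus, and conclude with \cref{lemma: affine fibtraion over affine space is affine space}. However, your ``key observation'' --- that $\langle\lambda_i,\beta_j\rangle$ is the \emph{same} positive integer for all weights $\beta_j$ occurring in $\xi_i$, so that $\lambda_i(t)\xi_i=t^{m_i}\xi_i$ --- is false in general. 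Already for $\xi_4=v_{1232}+v_{2342}$ with $\lambda_4=(1,1,1,1)$ the pairings are $8$ and $11$, and for $\xi_{17}$ with $\lambda_{17}=(0,1,1,0)$ the short weights $\alpha_{1110},\alpha_{0111}$ pair to $2$ while the long weights $\alpha_{1120},\alpha_{0122}$ pair to $3$. What is true, and what the paper uses, is that the pairing is constant on the short weights and constant on the long weights separately; since $V=\mathfrak{g}_s\oplus\mathfrak{g}/\mathfrak{g}_s$ gives $\mathcal{B}_{\xi_i}=\mathcal{B}_{\xi_i^s}\cap\mathcal{B}_{\xi_i^l}$ and each piece is only rescaled, $\mathbb{G}_m$-stability still follows. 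This is an easy repair, but as written your verification step would fail.

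The more substantive gap is in part (2), where you explicitly defer the bookkeeping. The statement that there are \emph{at most two} components, and that they are the attracting loci of $\dot wB/B$ and $u\dot wB/B$, does not follow from ``$u\dot wB\neq\dot wB$'' alone: since $\lambda_{17}$ annihilates $\alpha_1$ and $\alpha_4$, the fixed locus of $B\dot wB/B$ is the whole surface $U_{\alpha_1}U_{\alpha_4}\dot wB/B\cong\mathbb{A}^2$ (when $w^{-1}(\alpha_1),w^{-1}(\alpha_4)\notin\Phi^-$), and one must compute which closed subvariety of it lies in $\mathcal{B}_{\xi_{17}}$. The paper does this via the explicit formula $x_{\alpha_1}(a)x_{\alpha_4}(b)\xi_{17}=\xi_{17}+(a+b)v_{1111}+(a+b^2)v_{1122}$, so the fixed locus is cut out by a subset of $\{a,b,a+b,a+b^2\}$ and is therefore empty, an affine space, or exactly the two points $(0,0)$ and $(1,1)$ (i.e.\ $\dot wB/B$ and $u\dot wB/B$). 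Without this computation the cap of two components and their identification is unproved; with it, your argument goes through exactly as in the paper.
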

\begin{proof}
    Let $\xi_i = \xi_i^s + \xi_i^l$ where $\xi_i^s \in \mathfrak{g}_s$ and $\xi_i^l \in \mathfrak{g} / \mathfrak{g}_s$. We have listed in \cref{table: stabilizers in B} for each $\xi_i$ a cocharacter $\lambda_i : \mathbb{G}_m \rightarrow T$ where a tuple $(a,b,c,d)$ corresponds to the cocharacter $a\omega_1 + b \omega_2 + c \omega_3 + d \omega_4$ with $\langle \omega_i , \alpha_j \rangle = \delta_{ij}$. Looking at the weight spaces appearing in each $\xi_i$ one can check that $\lambda_i(t) \xi_i = t^{n_i} \xi_i^s + t^{m_i} \xi_i^l$ for some $m_i, n_i >0$. Hence,
    \begin{equation*}
        \mathcal{B}_{\xi_i} = \mathcal{B}_{\xi_i^s} \cap \mathcal{B}_{\xi_i^l} = \mathcal{B}_{t^{n_i}\xi_i^s} \cap \mathcal{B}_{t^{m_i} \xi_i^l} = \mathcal{B}_{\lambda_i(t) \xi_i}
    \end{equation*}
    for all $t \in \mathbb{G}_m$. Thus, each $\mathcal{B}_{\xi_i}$ is a $\mathbb{G}_m$-stable subvariety of $\mathcal{B}$ where $\mathbb{G}_m$ acts via $\lambda_i$.
    
    To prove (1), let us now consider the case where $i \neq 17$. Then the cocharacter $\lambda_i$ in \cref{table: stabilizers in B} is antidominant, i.e. for each $\alpha \in \Phi^-$ we have $\alpha(\lambda_i(t)) = t^k$ for some $k > 0$. It follows from this that
    \begin{equation*}
        \mathcal{B}^{\mathbb{G}_m} = \{\dot{w} B \mid w \in W \}
    \end{equation*}
    and $\lambda_i$ contracts $B\dot{w}B/B$ onto $\{ \dot{w}B/B \}$. Note that $\mathcal{B}_{\xi_i}$ is closed under taking $xB \mapsto \lim_{t \rightarrow 0} \lambda_i(t)xB$ since $\mathcal{B}_{\xi_i} \subset \mathcal{B}$ is closed. Hence, $\mathcal{B}_{\xi_{17}} \cap B \dot{w} B$ is either empty or $\dot{w}B/B  \in \mathcal{B}_{\xi_{17}}$ and $\mathcal{B}_{\xi_{17}} \cap B \dot{w} B$ is the attracting locus of $\{ \dot{w}B/B \}$ in $\mathcal{B}_{\xi_{17}}$. Note that $\mathcal{B}_{\xi_i} \cap B\dot{w}B/B$ is smooth by \cref{corollary: springer fiber intesect with Bruhat cell is affine}. Thus, if $\mathcal{B}_{\xi_i} \cap B\dot{w}B/B$ is non-emptry, it follows from results of Białynicki-Birula \cite{bialynicki1973some} that $\mathcal{B}_{\xi_i} \cap B\dot{w}B/B$ is an affine fibration over $ \{ \dot{w}B/B \}$. In other words, $\mathcal{B}_{\xi_i} \cap B\dot{w}B/B$ is either empty or an affine space. This proves (1).

    To prove (2), we now consider the case where $i = 17$. We use the cocharacter $\lambda_{17} = (0,1,1,0)$. Note that $\lambda_{17}$ is not antidominant since we only have $\alpha(\lambda_{17}(t)) = t^k$ for some $k >0$ if $\alpha \in \Phi^- \backslash \{\alpha_{1}, \alpha_4 \}$. On the other hand, $\alpha_{1}(\lambda_{17}(t)) = \alpha_4(\lambda_{17}(t)) = 1$. Hence, we have
    \begin{equation*}
        (B \dot{w} B/B)^{\mathbb{G}_m} = \begin{cases}
            U_{\alpha_{1}} U_{\alpha_4} \dot{w}B/B & w^{-1}(\alpha_{1}),  w^{-1}(\alpha_4) \not\in \Phi^-\\ 
            U_{\alpha_{1}}  \dot{w}B/B & w^{-1}(\alpha_{1})\not\in \Phi^- ,  w^{-1}(\alpha_4) \in \Phi^-\\ 
            U_{\alpha_4} \dot{w}B/B &  w^{-1}(\alpha_{1})\in \Phi^- ,  w^{-1}(\alpha_4) \not\in \Phi^-\\ 
            \{ \dot{w}B/B \} & w^{-1}(\alpha_{1}),  w^{-1}(\alpha_4) \in \Phi^-
        \end{cases}
    \end{equation*}
    and $\lambda_{17}$ contracts $B\dot{w}B/B$ onto $(B\dot{w}B/B)^{\mathbb{G}_m}$. Using \cref{remark: computation via graph} one can easily compute that for any $a, b \in \bar{\mathbb{F}}_2$, we have
    \begin{equation*}
        x_{\alpha_{1}}(a) x_{\alpha_4}(b) \xi_{17} = \xi_{17} + (a+b) v_{1111} + (a+b^2) v_{1122}.
    \end{equation*}
    Thus, $(\mathcal{B}_{\xi_{17}} \cap B \dot{w} B/B)^{\mathbb{G}_m}$ is either empty or isomorphic to a closed subvariety of $\mathbb{A}^2 \cong U_{\alpha_{1}} U_{\alpha_4}$ with defining ideal generated by a subset of $\{a,b,a+b,a+b^2 \}$ (where $a \in \mathcal{O}(U_{\alpha_{1}} U_{\alpha_4})$ picks out the $U_{\alpha_{1}} $ component and $b$ the $U_{\alpha_{4}}$ component). The only possibility for this is that $(\mathcal{B}_{\xi_{17}} \cap B \dot{w} B/B)^{\mathbb{G}_m}$ is either empty, an affine space or consists of the two points $\{ \dot{w}B/B, u \dot{w}B/B \}$. It follows again from \cref{corollary: springer fiber intesect with Bruhat cell is affine} and results of Białynicki-Birula \cite{bialynicki1973some} that $\mathcal{B}_{\xi_{17}} \cap B \dot{w} B/B \rightarrow (\mathcal{B}_{\xi_{17}} \cap B \dot{w} B/B)^{\mathbb{G}_m}$ is an affine fibration. Thus, if $(\mathcal{B}_{\xi_{17}} \cap B \dot{w} B/B)^{\mathbb{G}_m}$ is an affine space we can deduce that $\mathcal{B}_{\xi_{17}} \cap B \dot{w} B/B$ is an affine space by \cref{lemma: affine fibtraion over affine space is affine space}. In the case where $(\mathcal{B}_{\xi_{17}} \cap B \dot{w} B/B)^{\mathbb{G}_m}$ consists of two points, we get that $\mathcal{B}_{\xi_{17}} \cap B \dot{w} B/B$ is a disjoint union of two affine spaces which are the attracting loci of $\dot{w} B/B$ and $u\dot{w}B/B$ in $\mathcal{B}_{\xi_{17}}$.
\end{proof}
Note that for any variety $X$ with an affine paving $\emptyset = X_0 \subset X_1 \subset .... \subset X_n = X$, the Borel-Moore homology $H_*(X)$ has a basis given by the fundamental classes $[X_1], [X_2], ..., [X_n]$ and thus $H_{odd}(X) = 0$. Thus, we obtain the following corollary from \cref{thm: affine paving}.
\begin{corollary}\label{corollary: odd BM vanishing}
    For each $i = 1,...,24$, we have $H_{odd}(\mathcal{B}_{\xi_i}) = 0$.
\end{corollary}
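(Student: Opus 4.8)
The plan is to read this off directly from \cref{thm: affine paving} via the standard principle that a complex algebraic variety admitting an affine paving has Borel--Moore homology $H_*$ concentrated in even degrees. Concretely, suppose $\emptyset = X_0 \subset X_1 \subset \cdots \subset X_n = X$ is an affine paving, so each $X_j$ is closed in $X$ and $X_j \setminus X_{j-1} \cong \mathbb{A}^{n_j}$. I would argue by induction on $j$ that $H_*(X_j)$ is concentrated in even degrees: the closed inclusion $X_{j-1} \hookrightarrow X_j$ with open complement $\mathbb{A}^{n_j}$ gives a long exact sequence in Borel--Moore homology
\begin{equation*}
    \cdots \rightarrow H_k(X_{j-1}) \rightarrow H_k(X_j) \rightarrow H_k(\mathbb{A}^{n_j}) \rightarrow H_{k-1}(X_{j-1}) \rightarrow \cdots,
\end{equation*}
and since $H_*(\mathbb{A}^{n_j})$ is one-dimensional, placed in the single even degree $2n_j$, while $H_*(X_{j-1})$ vanishes in odd degrees by the inductive hypothesis, all connecting maps vanish, the sequence breaks into short exact sequences, and $H_*(X_j)$ is again concentrated in even degrees, with a basis given by the fundamental classes $[\overline{X_j \setminus X_{j-1}}]$. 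Applying this with $X = \mathcal{B}_{\xi_i}$ yields $H_{odd}(\mathcal{B}_{\xi_i}) = 0$.

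The only point that needs attention is that \cref{thm: affine paving}(2) gives, for $\mathcal{B}_{\xi_{17}}$, only a stratification by the locally closed pieces $\mathcal{B}_{\xi_{17}} \cap B\dot{w}B/B$ as $w$ ranges over $W$, in which some pieces are a disjoint union of two affine spaces rather than a single affine cell; so this stratification is not literally an affine paving. To fix this I would begin with the filtration of $\mathcal{B}_{\xi_{17}}$ by the closed subsets $\mathcal{B}_{\xi_{17}} \cap \mathcal{B}_{\le w}$, where $\le$ is a total order on $W$ refining the Bruhat order, and refine it as follows: whenever $\mathcal{B}_{\xi_{17}} \cap B\dot{w}B/B = C_w \sqcup C_w'$ with $C_w, C_w'$ the two attracting loci from \cref{thm: affine paving}(2), each of $C_w, C_w'$ is closed in the open subset $\mathcal{B}_{\xi_{17}} \cap B\dot{w}B/B$ of $\mathcal{B}_{\xi_{17}} \cap \mathcal{B}_{\le w}$, so $(\mathcal{B}_{\xi_{17}} \cap \mathcal{B}_{< w}) \cup C_w$ is closed in $\mathcal{B}_{\xi_{17}}$, and inserting it into the filtration between $\mathcal{B}_{\xi_{17}} \cap \mathcal{B}_{< w}$ and $\mathcal{B}_{\xi_{17}} \cap \mathcal{B}_{\le w}$ produces a genuine affine paving. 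For $i \neq 17$ this issue does not arise, since \cref{thm: affine paving}(1) already yields single affine cells.

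I do not expect a substantive obstacle here: the real content is entirely contained in \cref{thm: affine paving}, and all that remains is the routine homological bookkeeping of the previous two paragraphs. So the proof I would write simply records the even-vanishing argument for affine pavings and the refinement of the $\xi_{17}$ stratification, and concludes.
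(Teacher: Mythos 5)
Your proposal is correct and takes essentially the same route as the paper: the paper deduces the corollary immediately from \cref{thm: affine paving} together with the standard fact that a variety with an affine paving has Borel--Moore homology spanned by the fundamental classes of the closed pieces, hence concentrated in even degrees. Your additional care in refining the $\xi_{17}$ stratification (whose Bruhat pieces may be disjoint unions of two cells) into a genuine affine paving is exactly the point the paper leaves implicit in asserting that $\mathcal{B}_{\xi_{17}}$ ``admits an affine paving.''
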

\section{The Springer correspondence}\label{section: Springer corr}
We now establish our exotic Springer correspondence. Consider the `exotic Springer sheaf'
\begin{equation*}
   \textbf{S} := \mu_* \textbf{1}_{\tilde{V}}[\dim \tilde{V}] \in D^b_c(\mathfrak{N}(V)).
\end{equation*}
This is a $G$-equivariant perverse sheaf since $\mu$ is $G$-equivariant and semismall (\cref{lemma: springer res is small/semismall}). The following result was also obtained in the proof of \cite[Lemma 4.10]{antor2025geometric} from a geometric realization of the affine Hecke algebra with unequal parameters. We give a more direct proof which is analogous to the Fourier transform approach to classical Springer theory (see for example \cite[Thm. 8.2.8]{achar2021perverse}).
\begin{proposition}\label{prop: endomorphisms of springer sheaf}
    There is a canonical isomorphism $\mathbb{C}[W] \cong \End_{D^b_c(\mathfrak{N}(V))}(\textbf{S})$.
\end{proposition}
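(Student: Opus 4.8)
The plan is to follow the Fourier--Deligne transform approach to Springer theory (c.f. \cite[Thm.~8.2.8]{achar2021perverse}); the geometric point is that inside the trivial vector bundle $\mathcal{B} \times V$ over $\mathcal{B}$ the sub-bundle $\tilde{V} = G \times^B V^-$ is the orthogonal complement of $\tilde{V}^{gs} = G \times^B V^{\le 0}$. First I would observe that $V$ is self-dual as a $G$-module: the summands $\mathfrak{g}_s$ and $\mathfrak{g}/\mathfrak{g}_s$ are simple and, since $w_0 = -\id$ in $W(F_4)$, each is isomorphic to its own dual, so $V \cong V^*$. Fix a nondegenerate $G$-invariant bilinear form $\langle\,,\,\rangle$ on $V$ realizing this. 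By $T$-invariance the form pairs $V_\alpha$ with $V_{-\alpha}$ and is nondegenerate on $V_0$, and a short computation with weights then gives
\[
(V^{\le 0})^{\perp} = V^-.
\]
Since the form is $G$-invariant, taking orthogonal complements commutes with the $G$-twist, so the sub-bundle of $\mathcal{B}\times V$ orthogonal to $\tilde{V}^{gs}$ is precisely $\tilde{V}$.

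Next I would introduce the Fourier--Deligne transform $\mathbb{T}_V \colon D^b_c(V) \to D^b_c(V)$ attached to $\langle\,,\,\rangle$ and a nontrivial additive character (available since $\ell \neq 2$), which is a $G$-equivariant equivalence of triangulated categories whose quasi-inverse is again a Fourier transform. The two standard properties I would invoke are: (a) the relative Fourier transform over $\mathcal{B}$ carries the constant sheaf on a sub-bundle of $\mathcal{B}\times V$ to the constant sheaf on the orthogonal sub-bundle, up to a shift and Tate twist; and (b) the Fourier transform commutes with proper pushforward along the base, i.e. with $R\Gamma(\mathcal{B},-)$. Applying (a) to the pair $(\tilde{V}^{gs},\tilde{V})$, then (b) to push down to $V$, and invoking \cref{corollary: grothendieck springer sheaf is IC sheaf}, I obtain, up to an overall shift and Tate twist,
\[
\mathbb{T}_V\bigl(IC(V,\mathscr{L})\bigr) \;\cong\; \iota_* \textbf{S},
\]
where $\iota \colon \mathfrak{N}(V) \hookrightarrow V$ is the closed inclusion and $IC(V,\mathscr{L}) = \mu^{gs}_*\textbf{1}_{\tilde{V}^{gs}}[\dim \tilde{V}^{gs}]$.

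To conclude: $\mathbb{T}_V$ is an equivalence, hence induces a ring isomorphism $\End(IC(V,\mathscr{L})) \xrightarrow{\sim} \End(\iota_*\textbf{S})$ (shifts and Tate twists do not change endomorphism algebras), and since $\iota$ is a closed immersion $\iota_*$ is fully faithful, so $\End(\iota_*\textbf{S}) = \End(\textbf{S})$. Composing with the canonical isomorphism $\mathbb{C}[W] \cong \End(IC(V,\mathscr{L}))$ of \cref{corollary: grothendieck springer sheaf is IC sheaf} --- which comes from the deck group of the $W$-Galois cover of \cref{lemma: gorth springer resolution over regular semisimple locus} --- yields the desired $\mathbb{C}[W] \cong \End(\textbf{S})$. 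The steps requiring genuine care are the verification of self-duality of $V$ and of the identity $(V^{\le 0})^{\perp} = V^-$ in characteristic $2$ (only nondegeneracy of the form is used, so the symmetric-versus-alternating issue is harmless but must be checked), and pinning down the precise shift and twist in the displayed isomorphism together with the claim that the resulting isomorphism is canonical, i.e. independent of the auxiliary choices and compatible with the monodromy $W$-action on $\mathscr{L}$; I expect the homological-algebra skeleton to be entirely formal, with the only real work being these bookkeeping points and precise citation of properties (a) and (b).
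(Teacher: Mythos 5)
Your proposal is correct and follows essentially the same route as the paper: self-duality of $V$ identifying $(V^{\le 0})^{\perp}$ with $V^-$, a relative Fourier transform over $\mathcal{B}$ exchanging the constant sheaves on $\tilde{V}^{gs}$ and $\tilde{V}$, commutation with pushforward to $V$, and then \cref{corollary: grothendieck springer sheaf is IC sheaf}. The only (harmless) difference is that you use the Fourier--Deligne transform with an additive character where the paper uses the Fourier--Laumon transform of \cite[\S 6.9]{achar2021perverse}; both satisfy the two properties you invoke.
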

\begin{proof}
    Recall that $\mathfrak{g}_s$ is the irreducible $G$-module whose non-zero weights are the short roots. The same holds for the dual representation $\mathfrak{g}_s^{\vee}$ and thus we can pick an isomorphism of $G$-modules $\mathfrak{g}_s \cong \mathfrak{g}_s^{\vee}$. Similarly, $\mathfrak{g}/\mathfrak{g}_s$ is the irreducible $G$-module whose weights are the long roots and we get an isomorphism $\mathfrak{g}/\mathfrak{g}_s \cong (\mathfrak{g}/\mathfrak{g}_s)^{\vee}$. Combining these yields an isomorphism of $G$-modules $V \overset{\sim}{\rightarrow} V^{\vee}$. This isomorphism identifies $V_{\alpha}$ with $(V^{\vee})_{\alpha} = (V_{- \alpha})^{\vee}$ and thus it restricts to an isomorphism
    \begin{equation}\label{eq: negative part identified with annihilator}
        V^- \overset{\sim}{\rightarrow} (V^{\le 0})^{\perp} = \{ \lambda \in V^{\vee} \mid \lambda|_{V^{\le 0}} = 0 \}.
    \end{equation}
    The rest of the proof is analogous to the Fourier transform approach to classical Springer theory (c.f. \cite[Thm. 8.2.8]{achar2021perverse}). We sketch the argument for the convenience of the reader. If $\mathcal{V} \rightarrow X$ is a vector bundle with dual bundle $\mathcal{V}^{\vee} \rightarrow X$, the Fourier-Laumon transform (see \cite[§6.9]{achar2021perverse}) is an equivalence of categories
    \begin{equation*}
        Four_V : D^b_{\mathbb{G}_m}(\mathcal{V}) \rightarrow D^b_{\mathbb{G}_m}(\mathcal{V}^{\vee}).
    \end{equation*}
    We regard $\tilde{V}$ and $\tilde{V}^{gs}$ as a subbundles of the trivial bundle $G \times^B V \cong \mathcal{B} \times V$. Denote the corresponding inclusions by $\iota$ and $\iota^{gs}$. Using the isomorphism $V \cong V^{\vee}$, we can identify $G \times^B V$ with its dual bundle $(G \times^B V)^{\vee} = G \times^B V^{\vee}$. It follows from \eqref{eq: negative part identified with annihilator} that under this identification, the annihilator bundle $(\tilde{V}^{gs})^{\perp} = G \times^B (V^{\le 0})^{\perp}$ gets identified with $\tilde{V} = G \times^B V^-$. It then follows from \cite[Cor. 6.9.14]{achar2021perverse} that
    \begin{equation}\label{eq: fourier of gs bundle is springer res}
        Four_{G \times^B V}(\iota^{gs}_* \textbf{1}_{\tilde{V}^{gs}}[\dim \tilde{V}^{gs}]) = \iota_* \textbf{1}_{\tilde{V}}[\dim \tilde{V}].
    \end{equation}
    Note that there is a cartesian square
    \begin{equation*}
        \begin{tikzcd}
            G \times^B V \cong \mathcal{B} \times V \arrow[r, "p_2"] \arrow[d, "p_1"] & V \arrow[d] \\
            \mathcal{B} \arrow[r] & \{ pt \}
        \end{tikzcd}
    \end{equation*}
    and $p_2 \circ \iota^{gs} = \mu^{gs}$ and $p_2 \circ \iota = i \circ \mu$ where $i: \mathfrak{N}(V) \hookrightarrow V$ is the inclusion. Moreover, by \cite[Prop. 6.9.15]{achar2021perverse} we have $(p_2)_* \circ Four_{G \times^B V} \cong Four_{V} \circ (p_2)_*$. Thus
    \begin{align*}
        i_* \textbf{S} & \cong(p_2)_* \iota_* \textbf{1}_{\tilde{V}} [\dim \tilde{V}] \\
        \overset{\eqref{eq: fourier of gs bundle is springer res}}&{\cong}  (p_2)_*Four_{G \times^B V}(\iota^{gs}_* \textbf{1}_{\tilde{V}^{gs}}[\dim \tilde{V}^{gs}]) \\
        &\cong Four_{V}((p_2)_*\iota^{gs}_* \textbf{1}_{\tilde{V}^{gs}}[\dim \tilde{V}^{gs}]) \\
        &\cong Four_{V}(\mu^{gs}_* \textbf{1}_{\tilde{V}^{gs}}[\dim \tilde{V}^{gs}]).
    \end{align*}
    Hence, we can identify
    \begin{align*}
        \End(\textbf{S}) & \cong \End(i_* \textbf{S}) \\
        & \cong \End(Four_{V}(\mu^{gs}_* \textbf{1}_{\tilde{V}^{gs}}[\dim \tilde{V}^{gs}])) \\
        & \cong \End(\mu^{gs}_* \textbf{1}_{\tilde{V}^{gs}}[\dim \tilde{V}^{gs}]) \\
        \overset{\cref{corollary: grothendieck springer sheaf is IC sheaf}}&{\cong}  \mathbb{C}[W].
    \end{align*}
\end{proof}
The isomorphism $\mathbb{C}[W] \cong \End(\textbf{S})$ from \cref{prop: endomorphisms of springer sheaf} gives rise to a canonical bijection
\begin{equation}\label{eq: Irr W bijects to general perverse sheaves}
    \Irr(W) \overset{1:1}{\leftrightarrow} \{ X \in \Irr(\Perv_G(\mathfrak{N}(V))) \mid X \text{ is a direct summand of } \textbf{S} \}.
\end{equation}
Using that there are only finitely many $G$-orbits in $\mathfrak{N}(V)$ (\cref{thm: finitely many orbits}) there is a bijection
\begin{align*}
    \Irr(\Perv_G(\mathfrak{N}(V))) \overset{1:1}&{\leftrightarrow}\{ (x,\rho) \mid  x \in \mathfrak{N}(V) , \rho \in \Irr(A(x)) \} /G \\
    IC(G\cdot x, \rho) &\mapsto (x, \rho).
\end{align*}
If $M$ is an $A(x)$-representation and $\rho$ is an irreducible $A(x)$-representation, we define
\begin{equation*}
    M_{\rho} := \Hom_{A(x)}(\rho, M).
\end{equation*}
\begin{lemma}
    If $IC(G\cdot x, \rho)$ appears in $\textbf{S}$, then $H_*(\mathcal{B}_x)_{\rho} \neq 0$.
\end{lemma}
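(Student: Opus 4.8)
The plan is to recover $\rho$ from the Borel--Moore homology of $\mathcal{B}_x$ by inspecting the costalk of $\textbf{S}$ at $x$ via the decomposition theorem. Since $\mu$ is proper and semismall (\cref{lemma: springer res is small/semismall}) and $\tilde{V}$ is smooth, $\textbf{S}$ is a semisimple $G$-equivariant perverse sheaf, so by the description of the simple objects of $\Perv_G(\mathfrak{N}(V))$ recalled above (which uses \cref{thm: finitely many orbits}) we may write
\begin{equation*}
    \textbf{S}\;\cong\;\bigoplus_{(y,\sigma)}IC(G\cdot y,\sigma)^{\oplus m_{y,\sigma}},
\end{equation*}
the hypothesis being that $m_{x,\rho}\ge 1$. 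Smoothness of $\tilde{V}$ moreover gives $\mathbb{D}\textbf{S}\cong\textbf{S}$.

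First I would pin down the costalk $i_x^!\textbf{S}$, where $i_x\colon\{x\}\hookrightarrow\mathfrak{N}(V)$. Since $\mathcal{B}_x$ is a closed subvariety of the flag variety $\mathcal{B}$, it is proper, so its Borel--Moore homology agrees with its ordinary homology. Proper base change identifies $i_x^*\textbf{S}$ with a shift of the cohomology complex of $\mathcal{B}_x$, i.e. $\mathcal{H}^k(i_x^*\textbf{S})\cong H^{k+\dim\tilde{V}}(\mathcal{B}_x)$, and applying Verdier duality together with $\mathbb{D}\textbf{S}\cong\textbf{S}$ and properness of $\mathcal{B}_x$ yields a $G_x$-equivariant isomorphism $\mathcal{H}^m(i_x^!\textbf{S})\cong H_{\dim\tilde{V}-m}(\mathcal{B}_x)$ for all $m$. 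Here $G_x$ acts on $\mathcal{B}_x=\mu^{-1}(x)$ because $\mu$ is $G$-equivariant and $x$ is $G_x$-fixed, and $G_x^{\circ}$ acts trivially on homology, so both sides are $A(x)$-modules and the isomorphism is $A(x)$-equivariant.

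Next I would do the degree bookkeeping. Put $d_0:=\dim(G\cdot x)$. For a pair $(y,\sigma)$ with $G\cdot y\ne G\cdot x$, either $x\notin\overline{G\cdot y}$, so that $i_x^!IC(G\cdot y,\sigma)=0$, or $G\cdot x$ is a boundary stratum of $\overline{G\cdot y}$ of dimension $d_0<\dim(G\cdot y)$, in which case the cosupport condition for the intersection cohomology complex forces $\mathcal{H}^m(i_{G\cdot x}^!IC(G\cdot y,\sigma))=0$ for $m\le -d_0$; restricting further along the inclusion of $\{x\}$ into the smooth orbit $G\cdot x$ (which raises cohomological degrees by $2d_0$) then gives $\mathcal{H}^m(i_x^!IC(G\cdot y,\sigma))=0$ for $m\le d_0$. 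Thus no such summand contributes to $\mathcal{H}^{d_0}(i_x^!\textbf{S})$. On the other hand, since $IC(G\cdot x,\sigma)$ restricts to $\sigma[d_0]$ on the open orbit, $i_x^!IC(G\cdot x,\sigma)$ is concentrated in degree $d_0$ and equals $\sigma$ there. Taking $\mathcal{H}^{d_0}$ of the decomposition of $\textbf{S}$ and using the equivalence between $G$-equivariant local systems on $G\cdot x$ and representations of $A(x)$, I obtain an isomorphism of $A(x)$-modules
\begin{equation*}
    H_{\dim\tilde{V}-d_0}(\mathcal{B}_x)\;\cong\;\mathcal{H}^{d_0}(i_x^!\textbf{S})\;\cong\;\bigoplus_{\sigma\in\Irr(A(x))}\sigma^{\oplus m_{x,\sigma}}.
\end{equation*}
Since $m_{x,\rho}\ge 1$, the representation $\rho$ occurs as a summand, so $H_*(\mathcal{B}_x)_{\rho}=\Hom_{A(x)}(\rho,H_*(\mathcal{B}_x))\ne 0$, which is the claim.

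The one genuinely fiddly part is the shift-and-twist bookkeeping in the last two steps: fixing the normalization of the $IC$ complexes, keeping the base-change and Verdier-duality isomorphisms compatible with the $A(x)$-action, and applying the cosupport inequality with the correct sign. All the inputs used --- the decomposition theorem, the $IC$ support and cosupport conditions, proper base change, Verdier duality --- are standard, and in particular this argument does not rely on the affine pavings of \cref{thm: affine paving}.
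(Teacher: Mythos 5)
Your argument is correct and is essentially the paper's proof: both identify $H_*(\mathcal{B}_x)$ with the costalk cohomology $H^*(i_x^!\textbf{S})$ via proper base change and smoothness of $\tilde{V}$, and then observe that the summand $IC(G\cdot x,\rho)$ contributes $\rho$ there. Your extra step of isolating the degree $d_0=\dim(G\cdot x)$ via the cosupport conditions is more precise than necessary for the lemma (since $\Hom_{A(x)}(\rho,-)$ applied to a direct summand already suffices), but it is harmless and correct.
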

\begin{proof}
    For any variety $X$, let $p:X \rightarrow \{ pt \}$ be the structure map. Since $\tilde{V}$ is smooth, we have $\omega_{\tilde{V}} = \textbf{1}_{\tilde{V}} [2 \dim \tilde{V}]$. Let $\iota: \mathcal{B}_x \hookrightarrow \mathcal{B}$ and $\iota_x : \{ x \} \hookrightarrow \mathfrak{N}(V)$ be the natural inclusions. By base change, we have
    \begin{align*}
        H_i(\mathcal{B}_x) & = H^{-i}(p_* \omega_{\mathcal{B}_x}) \\
        & = H^{-i}(p_* \iota^! \textbf{1}_{\tilde{V}} [2 \dim \tilde{V}]) \\
        & = H^{-i} ( \iota_x^! \mu_* \textbf{1}_{\tilde{V}}[2 \dim \tilde{V}]) \\
        &= H^{ 2 \dim \tilde{V} -i }(\iota_x^! \textbf{S}).
    \end{align*}
    If $IC(G\cdot x, \rho)$ appears in $\textbf{S}$, then $\rho$ appears in $H^*(\iota_x^!\textbf{S})$ and thus also in $H_*(\mathcal{B}_x)$. This completes the proof.
\end{proof}
Using \eqref{eq: Irr W bijects to general perverse sheaves} and the lemma above, we get a canonical injection
\begin{equation}\label{eq: Springer injection}
    \Irr(W) \hookrightarrow \{ (x,\rho) \mid x \in \mathfrak{N}(V) , \rho \in \Irr(A(x)), H_*(\mathcal{B}_x)_{\rho} \neq 0 \}/G.
\end{equation}
We now show that this map is a bijection.
\begin{theorem}\label{thm: exotic Springer correspondence}
    The map from \eqref{eq: Springer injection} is a bijection. We have $H_*(\mathcal{B}_x)_{\rho} = 0$ if and only if $(x, \rho) \not\sim (\xi_{17}, sgn)$.
\end{theorem}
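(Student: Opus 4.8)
The plan is a counting argument that reduces the theorem to the single vanishing statement $H_*(\mathcal{B}_{\xi_{17}})_{sgn}=0$. Since the map in \eqref{eq: Springer injection} is injective and $\Irr(W)$ has $25$ elements, its image has exactly $25$ elements. By \cref{table: nilpotent orbits} every component group $A(\xi_i)$ is trivial except $A(\xi_{17})\cong S_3$, so there are exactly $23+3=26$ pairs $(x,\rho)/G$; hence the codomain $\{(x,\rho)\mid H_*(\mathcal{B}_x)_\rho\neq 0\}/G$ of \eqref{eq: Springer injection} has either $25$ or $26$ elements. Therefore it suffices to prove that $(\xi_{17},sgn)$ does \emph{not} lie in this codomain, that is, that $H_*(\mathcal{B}_{\xi_{17}})_{sgn}=0$: then the codomain has exactly $25$ elements, so \eqref{eq: Springer injection} is a bijection, and, as the codomain then contains exactly $25$ of the $26$ pairs, $(\xi_{17},sgn)$ is the unique pair with vanishing homology.

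To prove $H_*(\mathcal{B}_{\xi_{17}})_{sgn}=0$ I would compute $H_*(\mathcal{B}_{\xi_{17}})$ as a module over $A(\xi_{17})=S_3=\langle\bar u,\bar s\rangle$ (\cref{lemma: stabilizers are as in table}) using the affine paving of \cref{thm: affine paving}(2). The $G_{\xi_{17}}$-action on $H_*(\mathcal{B}_{\xi_{17}})$ factors through $A(\xi_{17})$ because $G_{\xi_{17}}^\circ$ is connected, and since $\langle\lambda_{17},\alpha_1\rangle=\langle\lambda_{17},\alpha_4\rangle=0$ the elements $u=x_{\alpha_1}(1)x_{\alpha_4}(1)$ and $s=\dot s_1\dot s_4$ centralize $\im(\lambda_{17})$, so the $S_3$-action commutes with the $\mathbb{G}_m$-action used to build the paving. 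As $u\in B$, left multiplication by $u$ preserves each piece $\mathcal{B}_{\xi_{17}}\cap B\dot wB/B$: on a piece which is a single affine cell it acts trivially on the one-dimensional Borel--Moore homology, and on a doubled piece it interchanges the two cells, which are the attracting loci of $\dot wB/B$ and $u\dot wB/B$. Writing $a$ for the number of single cells and $b$ for the number of doubled pieces, we get $\dim H_*(\mathcal{B}_{\xi_{17}})=a+2b$ and $\mathrm{tr}(\bar u\mid H_*(\mathcal{B}_{\xi_{17}}))=a$.

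For the trace of a $3$-cycle I would use $g:=(us)^2\in G_{\xi_{17}}$: a short computation in the rank-one subgroups generated by $U_{\pm\alpha_1}$ and $U_{\pm\alpha_4}$ shows that $g=x_{\alpha_1}(1)x_{-\alpha_1}(1)x_{\alpha_4}(1)x_{-\alpha_4}(1)$ has order $3$, and since $G_{\xi_{17}}^\circ$ is unipotent the image $\bar g\in S_3$ is a $3$-cycle. Because $3$ is invertible in $\bar{\mathbb{F}}_2$ and $H_{odd}(\mathcal{B}_{\xi_{17}})=0$ (\cref{corollary: odd BM vanishing}), the Lefschetz fixed-point formula gives $\mathrm{tr}(\bar g\mid H_*(\mathcal{B}_{\xi_{17}}))=\chi(\mathcal{B}_{\xi_{17}}^g)$. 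From the character table of $S_3$ — whose classes are $\{e\}$, the transpositions (represented by $\bar u$) and the $3$-cycles (represented by $\bar g$) — the multiplicity of $sgn$ in $H_*(\mathcal{B}_{\xi_{17}})$ equals $\tfrac16\bigl(\dim H_*(\mathcal{B}_{\xi_{17}})-3\,\mathrm{tr}(\bar u)+2\,\mathrm{tr}(\bar g)\bigr)=\tfrac13\bigl(b-a+\chi(\mathcal{B}_{\xi_{17}}^g)\bigr)$, so it remains to check that $\chi(\mathcal{B}_{\xi_{17}}^g)=a-b$. This is the main obstacle: one has to carry out the explicit enumeration of the $w\in W$ with $\mathcal{B}_{\xi_{17}}\cap B\dot wB/B\neq\emptyset$ — using the criterion of \cref{thm: affine paving}(2) and the identity $x_{\alpha_1}(a')x_{\alpha_4}(b')\xi_{17}=\xi_{17}+(a'+b')v_{1111}+(a'+b'^2)v_{1122}$ to detect the doubled pieces — to determine $a$ and $b$, and to compute $\chi(\mathcal{B}_{\xi_{17}}^g)$ directly from the Bruhat decomposition of the fixed locus; this is where the computer verification enters. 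Once $\chi(\mathcal{B}_{\xi_{17}}^g)=a-b$ is confirmed, $H_*(\mathcal{B}_{\xi_{17}})_{sgn}=0$ follows, and together with the first paragraph this proves the theorem.
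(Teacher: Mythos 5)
Your reduction in the first paragraph is exactly the paper's: $|\Irr(W)|=25$, the target of \eqref{eq: Springer injection} has at most $26$ classes by \cref{table: nilpotent orbits}, so everything hinges on showing $H_*(\mathcal{B}_{\xi_{17}})_{sgn}=0$. Your treatment of $\bar u$ is also sound: $u\in B_{\xi_{17}}$ preserves each piece $\mathcal{B}_{\xi_{17}}\cap B\dot wB/B$, fixes the class of each single cell, and (since $u$ commutes with $\lambda_{17}$) swaps the two cells of each doubled piece, giving $\mathrm{tr}(\bar u)=a$ and $\dim H_*=a+2b$.

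The problem is the $3$-cycle. Your character-theoretic formula correctly reduces the vanishing to the identity $\mathrm{tr}(\bar g\mid H_*(\mathcal{B}_{\xi_{17}}))=a-b$, but you propose to establish this via a Lefschetz computation of $\chi(\mathcal{B}_{\xi_{17}}^{g})$ that you never carry out. This is where the entire content of the statement sits: $g=su$ is not in $B$, so $\mathcal{B}_{\xi_{17}}^{g}$ is not adapted to the Bruhat stratification you have been using, and determining $a$, $b$ and $\chi(\mathcal{B}_{\xi_{17}}^{g})$ is a substantial enumeration that you defer to ``computer verification.'' As written, the proof is conditional on an unverified numerical identity, so there is a genuine gap. (You would also need to invoke a fixed-point theorem for finite-order automorphisms of order prime to $p$ on compactly supported $\ell$-adic cohomology, which should be cited.)

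The paper closes this gap without any further computation, and the trick is worth internalizing: instead of computing the trace of a $3$-cycle, one shows that \emph{every} basis vector of $H_*(\mathcal{B}_{\xi_{17}})$ is fixed by \emph{some} involution in $S_3$. You already have this for the $[\bar X_i]$ (fixed by $\bar u$). For the doubled pieces, $Y_{j,1}$ and $Y_{j,2}$ are the attracting loci of $\dot y_jB/B$ and $u\dot y_jB/B$ for the $\lambda_{17}$-action; the elements $sus=x_{-\alpha_1}(1)x_{-\alpha_4}(1)$ and $s$ commute with $\lambda_{17}$ and fix $\dot y_jB/B$ and $u\dot y_jB/B$ respectively (using $y_j^{-1}(-\alpha_1),y_j^{-1}(-\alpha_4)\in\Phi^-$ from \cref{thm: affine paving}), hence stabilize $Y_{j,1}$ and $Y_{j,2}$ and fix $[\bar Y_{j,1}]$ and $[\bar Y_{j,2}]$. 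This exhibits $H_*(\mathcal{B}_{\xi_{17}})$ as a quotient of a direct sum of modules $\Ind^{S_3}_{\mathbb{Z}/2}(triv)\cong triv\oplus std$, none of which contains $sgn$. If you want to salvage your approach, you should either carry out and document the enumeration of $a$, $b$ and $\chi(\mathcal{B}_{\xi_{17}}^{g})$, or replace the $3$-cycle step by this soft induction argument.
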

\begin{proof}
    It is well-known that $\Irr(W)$ has $25$ elements, so the right hand side of \eqref{eq: Springer injection} has at least $25$ elements. On the other hand, we can see from \cref{table: nilpotent orbits} that the right hand side of \eqref{eq: Springer injection} has at most $26$ elements since there are $26$ pairs of the form $(x,\rho)$ with $x \in \mathfrak{N}(V)$ and $\rho \in \Irr(A(x))$ up to conjugacy. Thus, to prove that \eqref{eq: Springer injection} is a bijection, it suffices to show that there is an orbit and a local system with $H_*(\mathcal{B}_{x})_{\rho} = 0$ (which then has to be unique up to conjugacy).
    
    We claim that $H_*(\mathcal{B}_{\xi_{17}})_{sgn} = 0$. Recall from \cref{lemma: stabilizers are as in table} that $A(\xi_{17}) = \langle \bar{u}, \bar{s} \rangle \cong S_3$. Also, recall that we have constructed in \cref{thm: affine paving} an affine paving of $\mathcal{B}_{\xi_{17}}$. More specifically let $x_1, ..., x_n \in W$ be the elements such that $ \mathcal{B}_{\xi_{17}} \cap B\dot{x}_iB/B =: X_i$ is an affine space and let $y_1 ,..., y_m \in W$ be the elements such that $\mathcal{B}_{\xi_{17}} \cap B\dot{y}_j B/B$ is a disjoint union of two affine spaces $Y_{j,1} $ and $Y_{j,2} $ where $\dot{w} B/B \in Y_{j,1}$ and $u \dot{w} B/B \in Y_{j,2}$. Then by \cref{thm: affine paving} we get a distinguished basis of $H_*(\mathcal{B}_{\xi_{17}})$ consisting of fundamental classes:
    \begin{equation*}
        \{ [\bar{X}_i], [\bar{Y}_{j,1}], [\bar{Y}_{j,2}] \mid i=1,...,n \text{ } j=1,...,m \}.
    \end{equation*}
    Note that any $\mathcal{B}_{\xi_{17}} \cap B\dot{w}B/B$ is $B_{\xi_{17}}$-stable. Since $u \in B_{\xi_{17}}$, we see that the elements $[\bar{X}_i]$ are fixed by $u$. Thus, we get a morphism of $S_3$-representations
    \begin{align*}
        \Ind^{S_3}_{\{e, \bar{u} \}} triv & \rightarrow H_*(\mathcal{B}_{\xi_{17}}) \\
        1 \otimes 1 & \mapsto [\bar{X}_i].
    \end{align*}

    Recall from \cref{thm: affine paving} that $Y_{j,1}$ is the attracting locus of $\dot{y}_j B /B \in \mathcal{B}_{\xi_{17}}$ and $Y_{j,2}$ is the attracting locus of $u \dot{y}_j B/B \in \mathcal{B}_{\xi_{17}}$ under the $\mathbb{G}_m$-action on $\mathcal{B}_{\xi_{17}}$ induced by $\lambda_{17} = (0,1,1,0)$. The element $sus = x_{-\alpha_1}(1) x_{-\alpha_4}(1)$ clearly commutes with $\lambda_{17}(t)$. Hence, $sus$ maps $Y_{j,1}$ to the attracting locus of $sus\dot{y}_jB/B$. We have
    \begin{equation*}
        sus\dot{y}_jB/B =  x_{-\alpha_1}(1) x_{-\alpha_4(1)} \dot{y}_j B/B = \dot{y}_jB/B
    \end{equation*}
    since $y_j^{-1}(-\alpha_1), y_j^{-1}(-\alpha_4) \in \Phi^-$ by \cref{thm: affine paving}. Thus, $sus$ maps $Y_{j,1}$ to $Y_{j,1}$ and hence it fixes the element $[\bar{Y}_{j,1}]$. In particular, we get a morphism of $S_3$-representations
    \begin{align*}
        \Ind^{S_3}_{\{e, \overline{sus} \}} triv & \rightarrow H_*(\mathcal{B}_{\xi_{17}}) \\
        1 \otimes 1 & \mapsto [\bar{Y}_{j,1}].
    \end{align*}
    Similarly, the element $s = u(sus)u$ commutes with $\lambda_{17} = (0,1,1,0)$ and fixes the element $u\dot{y}_jB/B \in \mathcal{B}_{17}$. Thus, it stabilizes the corresponding attracting locus which shows that $s$ fixes $[\bar{Y}_{j,2}]$. Hence, we get a morphism of $S_3$-representations
    \begin{align*}
        \Ind^{S_3}_{\{e, \bar{s} \}} triv & \rightarrow H_*(\mathcal{B}_{\xi_{17}}) \\
        1 \otimes 1 & \mapsto [\bar{Y}_{j,2}].
    \end{align*}
    Combining these maps, we get a surjective map of $S_3$-representations
    \begin{equation*}
        (\Ind^{S_3}_{\{e, \bar{u} \}} triv)^{\oplus n} \oplus (\Ind^{S_3}_{\{e, \overline{sus} \}} triv)^{\oplus m} \oplus (\Ind^{S_3}_{\{e, \bar{s} \}} triv)^{\oplus m} \twoheadrightarrow H_*(\mathcal{B}_{\xi_{17}}).
    \end{equation*}
    The sign representation never appears in the induction of the trivial representation from an order $2$ subgroup of $S_3$. Hence, $H_*(\mathcal{B}_{\xi_{17}})_{sgn} = 0$ which completes the proof.
\end{proof}
We conclude by discussing some applications to the representation theory of affine Hecke algebras. Let $\mathcal{H}_{q_1, q_2}^{\aff}(F_4)$ be the affine Hecke algebra of type $F_4$ with two formal parameters (where the short roots have parameter $q_1$ and the long roots have parameter $q_2$). The torus
\begin{equation*}
    \hat{T} := T \times \mathbb{G}_m \times \mathbb{G}_m
\end{equation*}
naturally acts on $V, \mathfrak{N}(V)$ and $\mathcal{B}$ where $T$ acts in the natural way, the first $\mathbb{G}_m$ acts by scaling $\mathfrak{g}_s$ and the second by scaling $\mathfrak{g}/\mathfrak{g}_s$ (and both copies of $\mathbb{G}_m$ act trivially on $\mathcal{B}$). We can also consider the corresponding complex and positive real torus
\begin{align*}
    \hat{\mathcal{T}} &:= (\mathbb{C}^{\times} \otimes_{\mathbb{Z}} X_*(T)) \times \mathbb{C}^{\times} \times \mathbb{C}^{\times} \\
    \hat{\mathcal{T}}_{\mathbb{R}_{>0}} &:= (\mathbb{R}_{>0} \otimes_{\mathbb{Z}} X_*(T)) \times \mathbb{R}_{>0} \times \mathbb{R}_{>0}.
\end{align*}
Then there is a canonical bijection (c.f. \cite[§3.2]{antor2025geometric})
\begin{align*}
    \hat{\mathcal{T}}/W \overset{1:1}&{\leftrightarrow} \{\text{Central characters } Z(\mathcal{H}_{q_1,q_2}^{\aff}(F_4)) \rightarrow \mathbb{C} \} \\
    a & \mapsto \chi_a.
\end{align*}
The central character $\chi_a$ is called positive real if $a \in \hat{\mathcal{T}}_{\mathbb{R}_{>0}}$. Note that we can canonically identify
\begin{equation*}
    X^*(\hat{\mathcal{T}}) \cong X^*(\hat{T}).
\end{equation*}
Using this, we can define for any $a \in \hat{\mathcal{T}}$ a diagonalizable subgroup scheme of $\hat{T}$
\begin{equation*}
    \hat{T}_a := Spec(\bar{\mathbb{F}}_2[X^*(\hat{T}) / \{ \rho \in X^*(\hat{T})\mid \rho(a) =1  \}]).
\end{equation*}
When $a \in \hat{\mathcal{T}}_{\mathbb{R}_{>0}}$ is positive real, $\hat{T}_a$ is a subtorus of $\hat{T}$ (c.f. \cite[Lemma 4.7]{antor2025geometric}).
\begin{theorem}\label{thm: exotic DL correspondence}
    Let $a \in \hat{\mathcal{T}}_{\mathbb{R}_{>0}}$ be a positive real element. Then there is a bijection
    \begin{equation*}
        \Irr_{\chi_a}(\mathcal{H}^{\aff}_{q_1,q_2}(F_4)) \overset{1:1}{\leftrightarrow} \{ (x,\rho) \mid x \in \mathfrak{N}(V)^{\hat{T}_a}, \rho \in \Irr( A(a,x)), H_*(\mathcal{B}_x^{\hat{T}_a})_{\rho} \neq 0 \}/G^{\hat{T}_a}
    \end{equation*}
    where $A(a,x) := G^{\hat{T}_a}_x/ (G^{\hat{T}_a}_x)^{\circ}$ and $\Irr_{\chi_a}(\mathcal{H}^{\aff}_{q_1,q_2}(F_4))$ denotes the set of all irreducible $\mathcal{H}^{\aff}_{q_1,q_2}(F_4)$-representations with central character $\chi_a$.
\end{theorem}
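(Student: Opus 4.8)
The plan is to obtain this statement as a direct application of the geometric classification theorem of \cite{antor2025geometric}: it is shown there that, once the geometric conditions (A1)--(A3) of \textit{loc.\ cit.}\ are known to hold for the exotic data $(G,V,\mathfrak{N}(V),\mu)$, the simple $\mathcal{H}^{\aff}_{q_1,q_2}(F_4)$-modules with positive real central character $\chi_a$ are realized on the Borel--Moore homology of the exotic Springer fibers $\mathcal{B}_x^{\hat{T}_a}$ and are parameterized exactly by the $G^{\hat{T}_a}$-orbits of pairs $(x,\rho)$ with $x\in\mathfrak{N}(V)^{\hat{T}_a}$, $\rho\in\Irr(A(a,x))$ and $H_*(\mathcal{B}_x^{\hat{T}_a})_\rho\neq 0$. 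Thus the entire content of the theorem is to verify (A1)--(A3), and each of them is supplied by one of the three parts of \cref{thm: A}: the finiteness of orbits is \cref{thm: finitely many orbits}; the condition on the exotic Springer sheaf $\textbf{S}$ needed to identify the convolution algebra with $\mathbb{C}[W]$ and to account for all simple modules is \cref{prop: endomorphisms of springer sheaf} together with the exotic Springer correspondence \cref{thm: exotic Springer correspondence}; and the parity condition that makes the standard-module formalism of \cite{antor2025geometric} run is \cref{thm: affine paving}.

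The only point that requires a genuine argument rather than a citation is the passage from $\mathfrak{N}(V)$ and the $\mathcal{B}_x$ to the $\hat{T}_a$-fixed loci. Here it is essential that $a$ is positive real, so that $\hat{T}_a\subseteq\hat{T}$ is an honest subtorus (c.f.\ \cite[Lemma 4.7]{antor2025geometric}); then $G^{\hat{T}_a}=Z_G(\hat{T}_a)$ is connected reductive and all the relevant fixed-point varieties are again smooth (resp.\ closed) and carry $G^{\hat{T}_a}$-actions. Finiteness of the $G^{\hat{T}_a}$-orbits on $\mathfrak{N}(V)^{\hat{T}_a}$ follows from \cref{thm: finitely many orbits} together with the standard fact that the $\hat{T}_a$-fixed points of a $G$-homogeneous space form finitely many orbits under the centralizer of the torus. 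For the parity statement, the key observation is that the affine paving constructed in \cref{thm: affine paving} is by the $T$-stable cells $\mathcal{B}_{\xi_i}\cap B\dot{w}B/B$, each of which is (a union of one or two) Białynicki-Birula attracting cells for the cocharacter $\lambda_i$; since $\hat{T}_a$ acts on $\mathcal{B}$ through $T$ it commutes with $\lambda_i$, so taking $\hat{T}_a$-fixed points again produces an attracting fibration over a $\hat{T}_a$-fixed attracting locus, and since $\hat{T}_a$ acts linearly on such a cell its fixed locus is an affine space. By \cref{lemma: affine fibtraion over affine space is affine space} each fixed-point cell is then an affine space --- for $\xi_{17}$ one runs the argument on the two components separately, using that $u$ commutes with $\lambda_{17}$ --- so $\mathcal{B}_x^{\hat{T}_a}$ inherits an affine paving and $H_{odd}(\mathcal{B}_x^{\hat{T}_a})=0$ for every $x$.

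With (A1)--(A3) available in the required form, the classification theorem of \cite{antor2025geometric} applies verbatim: the standard module attached to $(x,a)$ is $H_*(\mathcal{B}_x^{\hat{T}_a})$ regarded as an $A(a,x)$-equivariant module, the standard-module formalism produces from each pair $(x,\rho)$ with $H_*(\mathcal{B}_x^{\hat{T}_a})_\rho\neq 0$ exactly one simple module with central character $\chi_a$, and the completeness argument --- ultimately resting on \cref{prop: endomorphisms of springer sheaf} --- shows these exhaust $\Irr_{\chi_a}(\mathcal{H}^{\aff}_{q_1,q_2}(F_4))$. I expect the main obstacle to be exactly the fixed-point reduction of the second paragraph: one has to be sure that the affine paving of \cref{thm: affine paving} is $\hat{T}_a$-equivariant for every positive real $a$ simultaneously --- that is, that its cells genuinely are attracting loci for cocharacters centralizing all of $\hat{T}_a$ --- and that the possibly singular fixed-point variety $\mathcal{B}_x^{\hat{T}_a}$ still decomposes into affine spaces. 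This is precisely what the refined formulation of \cref{thm: affine paving}, including the explicit two-component description at $\xi_{17}$ and the list of cocharacters $\lambda_i$, is set up to deliver; everything else in the argument is formal.
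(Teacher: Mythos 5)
Your proposal takes essentially the same route as the paper: the proof there is simply an appeal to \cite[Thm.\ 4.13]{antor2025geometric}, with conditions (A1)--(A3) verified by \cref{thm: finitely many orbits}, \cref{corollary: odd BM vanishing} and \cref{thm: exotic Springer correspondence}. The fixed-point reduction you devote your second paragraph to is not carried out in this paper at all --- the conditions (A1)--(A3) are stated for the global objects $\mathfrak{N}(V)$ and $\mathcal{B}_x$, and the passage to the $\hat{T}_a$-fixed loci is internal to the cited theorem --- so that part of your argument, while plausible, is extra work the paper's proof does not need.
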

\begin{proof}
    This follows from \cite[Thm. 4.13]{antor2025geometric}. The conditions (A1), (A2) and (A3) from \textit{loc. cit.} are satisfied by \cref{thm: finitely many orbits,corollary: odd BM vanishing,thm: exotic Springer correspondence}.
\end{proof}
\begin{remark}
    We expect that the theorem above holds without the assumption that $a = (s,c_1,c_2) \in \hat{\mathcal{T}}$ is positive real as long as $\langle c_1,c_2 \rangle \subset \mathbb{C}^{\times}$ is torsion-free. By \cite{antor2025geometric} this comes down to verifying similar conditions (A1), (A2), (A3) for some exotic nilcones associated to various reductive subgroups of $F_4$. In fact, we expect these conditions to hold in much greater generality (see the introduction of \cite{antor2025geometric}) and for the relevant reductive subgroups of $F_4$ one should be able to verify these conditions by direct computation using techniques similar to the ones of this paper.
\end{remark}

\bibliographystyle{alpha}
\bibliography{bibliography}

\begin{thebibliography}{DCLP88}

\bibitem[Ach21]{achar2021perverse}
Pramod Achar.
\newblock {\em Perverse sheaves and applications to representation theory}.
\newblock American Mathematical Society, 2021.

\bibitem[Ant25]{antor2025geometric}
Jonas Antor.
\newblock Geometric realizations of affine {H}ecke algebras with unequal parameters.
\newblock {\em arXiv preprint arXiv:2505.02942}, 2025.

\bibitem[BB73]{bialynicki1973some}
Andrzej Bialynicki-Birula.
\newblock Some theorems on actions of algebraic groups.
\newblock {\em Annals of Mathematics}, 98(3):480--497, 1973.

\bibitem[BCW76]{bass1976locally}
Hyman Bass, Edwin Connell, and David~Lee Wright.
\newblock Locally polynomial algebras are symmetric algebras.
\newblock {\em Inventiones mathematicae}, 38(3):279--299, 1976.

\bibitem[Car89]{carter1989simple}
Roger Carter.
\newblock {\em Simple groups of {L}ie type}.
\newblock John Wiley \& Sons, 1989.

\bibitem[Cla58]{chevalley1958classification}
Chevalley Claude.
\newblock Classification des groupes de {L}ie alg{\'e}briques.
\newblock {\em Séminaire Ecole Normale Supérieure}, 1958.

\bibitem[DCLP88]{de1988homology}
Corrado De~Concini, George Lusztig, and Claudio Procesi.
\newblock Homology of the zero-set of a nilpotent vector field on a flag manifold.
\newblock {\em Journal of the American Mathematical Society}, 1(1):15--34, 1988.

\bibitem[DM20]{digne2020representations}
Fran{\c{c}}ois Digne and Jean Michel.
\newblock {\em Representations of finite groups of Lie type}.
\newblock Cambridge University Press, second edition, 2020.

\bibitem[Har77]{hartshorne2013algebraic}
Robin Hartshorne.
\newblock {\em Algebraic geometry}.
\newblock Springer Science \& Business Media, 1977.

\bibitem[His84]{hiss1984adjungierten}
Gerhard Hiss.
\newblock Die adjungierten {D}arstellungen der {C}hevalley-{G}ruppen.
\newblock {\em Archiv der Mathematik}, 42(5):408--416, 1984.

\bibitem[Hog82]{hogeweij1982almost}
Gerrit Hogeweij.
\newblock Almost-classical {L}ie algebras. {I}.
\newblock {\em Indagationes Mathematicae}, 85(4):441--452, 1982.

\bibitem[HS85]{holt1985nilpotent}
Derek Holt and Nicolas Spaltenstein.
\newblock Nilpotent orbits of exceptional {L}ie algebras over algebraically closed fields of bad characteristic.
\newblock {\em Journal of the Australian Mathematical Society}, 38(3):330--350, 1985.

\bibitem[Hum72]{humphreys2012introduction}
James Humphreys.
\newblock {\em Introduction to Lie algebras and representation theory}.
\newblock Springer Science \& Business Media, 1972.

\bibitem[Kat09]{kato2009exotic}
Syu Kato.
\newblock An exotic {D}eligne-{L}anglands correspondence for symplectic groups.
\newblock {\em Duke Math. J.}, 146(1):305--371, 2009.

\bibitem[Kat11]{kato2011deformations}
Syu Kato.
\newblock Deformations of nilpotent cones and {S}pringer correspondences.
\newblock {\em American Journal of Mathematics}, 133(2):519--553, 2011.

\bibitem[KL87]{kazhdan1987proof}
David Kazhdan and George Lusztig.
\newblock Proof of the {D}eligne-{L}anglands conjecture for {H}ecke algebras.
\newblock {\em Inventiones mathematicae}, 87(1):153--215, 1987.

\bibitem[Lus88]{lusztig1988cuspidal}
George Lusztig.
\newblock Cuspidal local systems and graded {H}ecke algebras, {I}.
\newblock {\em Publications Math{\'e}matiques de l'IH{\'E}S}, 67:145--202, 1988.

\bibitem[Lus89]{lusztig1989affine}
George Lusztig.
\newblock Affine {H}ecke algebras and their graded version.
\newblock {\em Journal of the American Mathematical Society}, 2(3):599--635, 1989.

\bibitem[Lus95a]{lusztig1995classification}
George Lusztig.
\newblock Classification of unipotent representations of simple $p$-adic groups.
\newblock {\em International Mathematics Research Notices}, 1995(11):517--589, 1995.

\bibitem[Lus95b]{lusztig1995cuspidal}
George Lusztig.
\newblock Cuspidal local systems and graded {H}ecke algebras. {II}.
\newblock In {\em Representations of groups}, volume~16 of {\em Conference Proceedings, Canadian Mathematical Society}, pages 217--275, 1995.

\bibitem[Lus02]{lusztig2002classification}
George Lusztig.
\newblock Classification of unipotent representations of simple $p$-adic groups, {II}.
\newblock {\em Representation Theory of the American Mathematical Society}, 6(9):243--289, 2002.

\bibitem[OSC25]{OSCAR}
O{SCAR} -- {O}pen {S}ource {C}omputer {A}lgebra {R}esearch system, {V}ersion 1.4.1, 2025.

\bibitem[Qui76]{quillen1976projective}
Daniel Quillen.
\newblock Projective modules over polynomial rings.
\newblock {\em Inventiones mathematicae}, 36(1):167--171, 1976.

\bibitem[Spa83]{spaltenstein1983unipotent}
Nicolas Spaltenstein.
\newblock On unipotent and nilpotent elements of groups of type {E}6.
\newblock {\em Journal of the London Mathematical Society}, 2(3):413--420, 1983.

\bibitem[Spa84]{spaltenstein1984nilpotent}
Nicolas Spaltenstein.
\newblock Nilpotent classes in {L}ie algebras of type {F}4 over fields of characteristic 2.
\newblock {\em J. Fac. Sci. Univ. Tokyo Sect. IA Math}, 30(3):517--524, 1984.

\bibitem[Spr84]{springer1984purity}
TA~Springer.
\newblock A purity result for fixed point varieties in flag manifolds.
\newblock {\em Journal of the Faculty of Science. University of Tokyo. Section IA. Mathematics}, 31(2):271--282, 1984.

\bibitem[{Sta}25]{stacks-project}
The {Stacks Project Authors}.
\newblock \textit{Stacks Project}.
\newblock \url{https://stacks.math.columbia.edu}, 2025.

\bibitem[Sus77]{suslin1977locally}
Andrey~Aleksandrovich Suslin.
\newblock Locally polynomial rings and symmetric algebras.
\newblock {\em Mathematics of the USSR-Izvestiya}, 11(3):472, 1977.

\bibitem[Xue14]{xue2014nilpotent}
Ting Xue.
\newblock Nilpotent coadjoint orbits in small characteristic.
\newblock {\em Journal of Algebra}, 397:111--140, 2014.

\end{thebibliography}
\end{document}